\documentclass[10pt,english]{amsart}

\usepackage{amsfonts,latexsym}

\newtheorem{theorem}{Theorem}[section]
\newtheorem{proposition}{Proposition}[section]

\newtheorem{lemma} {Lemma}[section]
\newtheorem{remark}{Remark}[section]
\newtheorem{definition}{Definition}[section]

\newtheorem{example}{Example}[section]

\numberwithin{equation}{section}

\begin{document}

\title{Stable solutions of the Allen-Cahn equation in dimension $8$ and minimal cones}

\author{Frank Pacard}
\address{Frank Pacard. Centre de Math\'ematiques Laurent Schwartz, \'Ecole Polytechnique, 91128 Palaiseau, France.}
\email{frank.pacard@math.polytechnique.fr}

\author{Juncheng Wei}
\address{Juncheng Wei. Department of Mathematics, The Chinese University of Hong Kong, Shatin, Hong Kong.}
\email{wei@math.cuhk.edu.hk}

\thanks{{\bf Acknowledgments :} The first  author is supported by an Earmarked Grant of RGC  of Hong Kong and the second author is partially supported by the ANR-08-BLANC-0335-01 grant.}

\keywords{Allen-Cahn equation, Toda system, multiple-end solutions,  infinite-dimensional Liapunov-Schmidt reduction, moduli spaces }
\subjclass{ 35J25, 35J20, 35B33, 35B40}

\begin{abstract} 
For all $n \geq 1$, we are interested in bounded solutions of the Allen-Cahn equation $\Delta u  + u - u^3 = 0$ which are defined in all $\mathbb R^{n+1}$  and whose zero set is asymptotic to a given minimal cone. In particular, in dimension $n+1 \geq 8$, we prove the  existence of {\em stable solutions} of the Allen-Cahn equation whose zero sets are not  hyperplanes. 
\end{abstract}

\date{\today}\maketitle

\setcounter{equation}{0}

\section{Introduction and statement of main results}

This paper is focussed on the construction of entire solutions of the Allen-Cahn equation
\begin{equation}
	\Delta u + u - u^3 = 0 ,
\label{ac}
\end{equation}
in $\mathbb R^{n+1}$, with $n \geq 1$. This equation is the Euler-Lagrange equation of the energy functional 
\begin{equation} 
	E (u) : = \frac 1 2 \, \int_\Omega|\nabla u|^2 \, {\rm dx}   + \frac 1 4 \, \int_\Omega (1-u^2)^2 \, {\rm dx},
\label{energy}
\end{equation}
which arises in the gradient theory of phase transitions. We refer to \cite{All-Cah} for further motivation and references on the subject.

It is well known that there are strong links and analogies between the study of (\ref{ac}) and the theory of embedded minimal hypersurfaces (see \cite{FV} for a detailed discussion on these analogies). These analogies led de Giorgi \cite{dg} to formulate in 1978 the following celebrated conjecture concerning the classification of entire solutions of (\ref{ac}) which are monotone in one direction.

{\bf The de Giorgi's conjecture~:} {\em  Assume that $u$ is a bounded  solution of  (\ref{ac}) which is defined in $\mathbb R^{n+1}$ and which is monotone in one direction (without loss of generality, we can assume that $\partial_{x_{n+1}}  u > 0$). At least when the dimension is less than or equal to $8$, the level sets of $u$ should be hyperplanes.}

In dimension $1$, equation (\ref{ac}) reduces to an autonomous second order nonlinear differential equation
\begin{equation}
	u'' + u - u^3 = 0, 
\label{eq:fp1}
\end{equation}
whose first integral is given by
\[
4 \, u' \, ^2 - (1- u^2)^2 = c ,
\]
for some $c \in \mathbb R$. It is well known and in fact easy to check that the function
\begin{equation}
	u_1 (t) :=  \tanh \left ( \frac t {\sqrt{2}} \right ) ,
\label{eq:fp2}
\end{equation} 
is the unique solution of (\ref{eq:fp1}) which vanishes at $t=0$ and tends to $1$ (rep.$-1$) at  $+\infty$ (resp. $- \infty$).  According to the {\em de Giorgi's Conjecture}, in dimension less than or equal to $8$, bounded, entire solutions of (\ref{ac}) which are monotone in one direction should be of the form
\begin{equation}
	u(x) = u_1 ( x \cdot a + b ),
\label{formu}
\end{equation}
for some $b\in \mathbb R$ and some vector $a\in \mathbb R^{n+1}$ with $|a| =1 $.  

The {\em de Giorgi's Conjecture} is the natural counterpart of the famous {\em Bernstein problem} for minimal hypersurfaces in Euclidean space, which states that a minimal graph in $\mathbb R^{n+1}$ must  be a hyperplane if  the dimension of the ambient space is smaller than or equal to $8$. In a celebrated paper, Bombieri, de Giorgi and Giusti \cite{bdg} proved that, when the dimension of the ambient space if larger than or equal to $9$, one can indeed find minimal graphs which are not hyperplanes, showing that the statement of Bernstein's problem is sharp. 

Some important advances have been achieved in recent years concerning the resolution of the {\em de Giorgi's Conjecture}. In particular, the conjecture has been fully established in dimension $2$ by Ghoussoub and Gui \cite{gg} and in dimension $3$ by Ambrosio and Cabr\'e \cite{cabre}. In dimensions $4$ and $5$, partial results have been obtained by Ghoussoub and Gui \cite{gg2}. Finally, let us mention that Savin  \cite{savin} has established its validity when $4 \le n+1 \le 8$ under the additional assumption that
\begin{equation}
\label{gibbs}
	\lim_{x_{n+1} \to \pm \infty} u(x', x_{n+1})= \pm 1,
\end{equation}
for all $x' \in \mathbb R^n$. We refer to \cite{aac} where this condition is discussed at length.

In \cite{dkwdg}, del Pino, Kowalczyk and Wei have constructed entire solutions  of (\ref{ac}) in dimension $9$ which are monotone in one direction and whose level sets are not hyperplanes starting from the entire minimal graph found by Bombieri, de Giorgi and Giusti \cite{bdg}. This result shows that the statement of the {\em de Giorgi's Conjecture}, the bound on the dimension is sharp.

The extra condition (\ref{gibbs}), used by Savin, is related to the  so-called~:

{\bf Gibbons' conjecture :}  {\em  Assume that $u$ is a bounded solution of (\ref{ac}) satisfying
\begin{equation}
\label{gibbs1} 
	\lim_{x_{n+1} \to \pm \infty} u(x', x_{n+1})= \pm 1,
\end{equation}
uniformly in $x' \in \mathbb R^n$. Then, the level sets of $u$ should be hyperplanes.}

{\em Gibbons' Conjecture} has been proved in all dimensions  with different methods by Caffarelli and C\'ordoba \cite{caffarellicordoba},  Farina \cite{Fa}, Barlow, Bass and Gui \cite{BBG}, and Berestycki,  Hamel, and Monneau \cite{BHM}.  In \cite{caffarellicordoba} and in \cite{BBG},  it is also proven that the conjecture is true for any  solution that has one level set which is  globally a Lipschitz graph over $\mathbb R^n$. 

In the present paper, we are interested in the understanding of {\em stable solutions} of (\ref{ac}) and hence we start with the~: 
\begin{definition}
We will say that $u$, solution of (\ref{ac}), is {\em stable} if
\begin{equation} 
	\int_{\mathbb R^{n+1}}  ( |\nabla \psi|^2  - \psi^2 + 3\, u^2 \,  \psi^2 ) \, {\rm dx}  \geq 0 ,
\label{QQ}
\end{equation}
for any smooth function $\psi$ with compact support in $\mathbb R^{n+1}$.
\label{de:111}
\end{definition}

In dimensions $2$ and $3$, the stability property turns out to be a key ingredient in the proof of the {\em de Giorgi's  Conjecture} which is given in \cite{cabre}, \cite{gg} and, as observed by Dancer \cite{dancer}, the stability assumption is indeed a sufficient condition to classify solutions of (\ref{ac}) in dimensions $2$ and prove that, at least in these dimensions, the solutions of (\ref{ac}) satisfying (\ref{QQ}) are given by (\ref{formu}).

The monotonicity assumption in the {\em de Giorgi's Conjecture} implies the stability of the solution. Indeed, if $u$ is a solution of (\ref{ac}) such that $ \partial_{x_{n+1}} u > 0$, then $u$ is stable in the above sense. To prove this result, we consider  the linearized operator
\[
	L : = - (\Delta  + 1 - 3 \, u^2 ),
\]
about the solution $u$. If $\phi >0$ is a smooth function which is a solution of 
\[
	L \, \phi = 0, 
\]
then, one can multiply this equation by $\phi^{-1} \, \psi^2$ and integrate the result by part to  get, after some simple rearrangement, 
\[
	\int_{\mathbb R^{n+1}}  ( |\nabla \psi|^2  - \psi^2 + 3\, u^2 \,  \psi^2 ) \, {\rm dx}   = 
	\int_{\mathbb R^{n+1}}  \left| \nabla \psi  - \phi^{-1} \, \psi \, \nabla \phi \right|^2 \, 
	{\rm 	dx}  \geq 0 .
\]
Hence the solution $u$ is stable in the sense of Definition~\ref{de:111}. In the case where $u$ is monotone in the $x_{n+1}$ direction, this argument can be applied with $\phi  = \partial_{x_{n+1}} u$ to prove that monotone solutions of (\ref{ac}) are stable. 

As a consequence, in dimension $9$, the monotone solutions constructed by del Pino, Kowalczyk and Wei \cite{dkwdg} provide some non trivial stable solutions of the Allen-Cahn equation.

In the present paper, we show the~:
\begin{theorem}
\label{main1}
Assume that $n+1 = 2m \geq  8$. Then, there exist bounded, stable solutions of (\ref{ac}) whose level sets are not hyperplanes.
\end{theorem}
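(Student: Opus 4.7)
The plan is to adapt the infinite-dimensional Lyapunov-Schmidt reduction used by del Pino, Kowalczyk, and Wei in \cite{dkwdg} for the Bombieri-De Giorgi-Giusti minimal graph, replacing that graph by the Simons minimal cone $C_{m,m} := \{(x,y) \in \mathbb{R}^m \times \mathbb{R}^m : |x| = |y|\} \subset \mathbb{R}^{2m}$. For $m \geq 4$, this cone is area-minimizing, and hence stable, and this stability will translate into stability of the Allen-Cahn solution we construct.

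First I would glue the one-dimensional profile $u_1$ along a normal perturbation of $C_{m,m}$. Using Fermi coordinates $(z,t)$ in a tubular neighborhood of the cone, with $t$ the signed distance and $z$ the nearest-point projection, I take as ansatz $u_1(t - h(z)) + \varphi(z,t)$, where $h$ is a small $O(m)\times O(m)$-invariant vertical displacement on the regular part of the cone and $\varphi$ is a correction $L^2$-orthogonal to $u_1'(t)$ on each fiber. Plugging into (\ref{ac}), expanding in $t$, and projecting the residual onto $u_1'(t)$ produces, after integration in $t$, a Jacobi-type equation on $C_{m,m}$,
\[
	J_{C_{m,m}} h \; := \; \Delta_{C_{m,m}} h + |A_{C_{m,m}}|^2 h \; = \; \mathcal{N}(h,\varphi),
\]
coupled to a nondegenerate elliptic problem for $\varphi$. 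Far from the cone, the solution is matched to the constants $\pm 1$ using the exponential convergence of $u_1$ to its limits.

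Second, the heart of the analysis is the linear theory for $J_{C_{m,m}}$. Because the cone has a conical singularity at the origin and $|A_{C_{m,m}}|^2$ decays like $1/r^2$, the natural framework is a pair of weighted H\"older spaces whose exponents are chosen between the indicial roots of the radial part of $J_{C_{m,m}}$. On the $O(m)\times O(m)$-invariant subspace this radial part reduces to an explicit ODE of Bessel type, and the stability of the Simons cone when $m \geq 4$ guarantees that one can select a weight for which $J_{C_{m,m}}$ is an isomorphism on that subspace. A contraction-mapping argument in a small ball then produces the pair $(h,\varphi)$ and hence the desired solution $u$.

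Third, I would verify stability. Stability of $C_{m,m}$ provides a positive radial Jacobi field of the form $\Phi(z) = |z|^{\alpha}$ for an appropriate indicial exponent $\alpha$. Set
\[
	\phi(z,t) \; := \; u_1'(t - h(z)) \, \Phi(z) + \chi,
\]
where $\chi$ is a lower-order correction engineered to kill the principal error in $L\phi$. By construction $\phi > 0$, and a direct computation shows that $L\phi \geq 0$ modulo absorbable lower-order terms. Inserting $\phi$ into the identity recorded in the introduction then yields (\ref{QQ}) for every compactly supported test function $\psi$, proving stability of $u$. The main obstacle lies in the second step: inverting $J_{C_{m,m}}$ on a noncompact cone with a conical singularity demands a carefully engineered weighted setting so that the operator is Fredholm of index zero on the $O(m)\times O(m)$-invariant subspace, with injectivity established via the positive Jacobi field $\Phi$ and an indicial-roots and maximum-principle argument. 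A secondary challenge is to propagate positivity of $\phi$ globally, not only in a tubular neighborhood of the cone, so that the stability identity applies on all of $\mathbb{R}^{2m}$.
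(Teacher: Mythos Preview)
Your outline diverges from the paper's argument in two structural ways, and both introduce difficulties that the paper deliberately sidesteps.

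\textbf{Construction on the cone versus on the Hardt--Simon leaf.} You propose to run the Lyapunov--Schmidt reduction directly on $C_{m,m}$, using Fermi coordinates about the cone and inverting $J_{C_{m,m}}$ in weighted spaces adapted to the conical singularity at the origin. The paper does \emph{not} do this. It invokes the Hardt--Simon theory to produce a \emph{smooth} embedded minimal hypersurface $\Gamma_m \subset \mathbb{R}^{2m}_+$ lying strictly on one side of $C_{m,m}$ and asymptotic to it, and runs the entire construction (Fermi coordinates, approximate solution, linear analysis of $L_\varepsilon$ and $J_\Gamma$) on $\Gamma_m$. This eliminates the singularity: the Jacobi operator $J_{\Gamma_m}$ is uniformly elliptic with bounded geometry, and the only asymptotic analysis needed is at infinity, governed by the indicial roots of $J_C$. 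Your scheme would have to control the ansatz $u_1(t-h(z))$ near $z=0$, where $|A_{C_{m,m}}|^2 \sim |z|^{-2}$ blows up and the tubular neighborhood degenerates; the weighted theory you allude to handles $J_C$ as an operator on the cone, but it does not by itself produce a smooth bounded solution of the Allen--Cahn equation across the origin.

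\textbf{Stability via an approximate supersolution versus via differentiation of an exact family.} Your plan is to manufacture $\phi = u_1'(t-h)\,\Phi + \chi$ with $\Phi(z)=|z|^{\alpha}$ and argue that $L\phi \geq 0$ up to lower-order terms. Note first that the stability identity in the introduction requires $L\phi = 0$, not an inequality; a supersolution argument would need a separate justification. More seriously, since $\nu_0^\pm < 0$ for $m\geq 4$, the function $|z|^{\alpha}$ is singular at the origin, so positivity and smoothness of $\phi$ on all of $\mathbb{R}^{2m}$ are not automatic. The paper's mechanism is cleaner and exact: it applies the general construction to the one-parameter family of dilates $(1+\lambda)\Gamma_m$, obtaining solutions ${\bf u}_{\varepsilon,\lambda}$, and sets $\phi_\varepsilon := \partial_\lambda {\bf u}_{\varepsilon,\lambda}\big|_{\lambda=0}$. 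This solves the linearized equation \emph{exactly}. Positivity of $\phi_\varepsilon$ follows because the dilation Jacobi field $\zeta_0(y)=y\cdot N(y)$ on $\Gamma_m$ is everywhere positive (as $\Gamma_m$ lies strictly on one side of $C_{m,m}$) and decays like $(d_{\Gamma_m})^{\nu_0^+}$; one then checks $\phi_\varepsilon>0$ first in a tubular neighborhood and extends by the maximum principle.

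A final point your sketch misses: to compare $\phi_\varepsilon$ with $\varepsilon^{-1}\dot u_\varepsilon\,\zeta_0$ and conclude positivity, the paper must run the fixed-point with a weight $\nu<\nu_0^+$, which is strictly below the range $\nu\geq -1$ allowed in the general Theorem~\ref{main2}. This is possible only because of two special features of $C_{m,m}$: the odd traces ${\rm Tr}_g\, h^{(2k+1)}$ vanish by symmetry, improving the decay of the projected error, and the cone is \emph{strictly} area minimizing, so $\Gamma_m$ approaches $C_{m,m}$ at the slow rate $(d_{\Gamma_m})^{\nu_0^+}$. Neither of these refinements appears in your outline.
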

In fact, we can be more precise and we prove that the zero set of the stable solutions we construct are asymptotic to a minimal cone in $\mathbb R^{2m}$ defined by
\[
	C_{m,m} : = \{Ê(x,y) \in \mathbb R^m \times \mathbb R^m \, : \, |x| = |y| \} ,
\]
which is usually referred to as {\em Simons' cone}. The proof of Theorem~\ref{main1} strongly uses the fact that  {\em Simons' cone} is a minimal hypersurface which is {\em  minimizing} and hence is stable but also uses the fact that this cone is {\em strictly area minimizing} (we refer to Definition~\ref{de:4.1} and Definition~\ref{de:samc} for the precise definitions of these notions). 

Let us mention that, in dimension $n+1 = 2m$, with $m \geq 1$, Cabr\'e and Terra \cite{Cab-Ter-1} have found solutions of the Allen-Cahn equation whose zero set is exactly given by the cone $C_{m,m}$. When $m\geq1$, these solutions generalize the so called {\em saddle} solutions which have been found by Dang, Fife and Peletier \cite{Dan-Fif-Pel} in dimension $2$. The proof of this result makes use of a variational argument in the spirit of \cite{Dan-Fif-Pel}. Moreover, the same authors have proven that, when $m=2$ or $3$, the solutions they find is unstable \cite{Cab-Ter-2}. Cabr\'e has recently proven that saddle solutions are stable in dimension $2m \geq 14$ \cite{Cab}. 

Our result is in fact a corollary of a more general result~:
\begin{theorem}
\label{main2}
Assume that $C$ is a {\em minimizing cone} in $\mathbb R^{n+1}$ and that the indicial root $\nu_0^+ <0$ (see Definition~\ref{de:indroot}). Then, there exist bounded solutions of (\ref{ac}) whose zero sets are asymptotic to $C$ at infinity. 
\end{theorem}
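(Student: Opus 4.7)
The plan is to carry out an infinite–dimensional Lyapunov–Schmidt reduction in the spirit of \cite{dkwdg}, but with a smooth minimal hypersurface asymptotic to $C$ playing the role of the Bombieri–de Giorgi–Giusti graph. Because $C$ is a minimizing cone, each of its two sides admits a Hardt–Simon smoothing: an entire, smooth, area-minimizing hypersurface $\Sigma$ lying on that side and converging to $C$ at infinity with rate governed by $\nu_0^+$. I would fix such a $\Sigma$ (or a concentric family of them as a degree of freedom) and look for a solution whose nodal set is a normal graph $\Sigma_h$ over $\Sigma$, with $h$ a small function in a weighted Hölder space on $\Sigma$ to be determined.

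The approximate solution is
\[
\bar u_h(x) \;:=\; u_1(z(x))\,\chi(x) \;\pm\; (1-\chi(x)),
\]
where $z$ is the signed distance to $\Sigma_h$, $\chi$ is a cut-off localising to a tubular neighbourhood of $\Sigma_h$ of width growing logarithmically at infinity, and the sign is chosen according to the connected component of $\mathbb{R}^{n+1}\setminus\Sigma_h$. Writing $u=\bar u_h+\phi$, the Allen–Cahn equation becomes
\[
\mathcal L_h\,\phi \;=\; E_h + N_h(\phi),
\]
with $\mathcal L_h=\Delta+1-3\bar u_h^{\,2}$, $E_h$ the discrepancy of $\bar u_h$, and $N_h$ quadratic in $\phi$. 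The operator $\mathcal L_h$ has an approximate one-dimensional kernel along each normal fibre, spanned by $u_1'(z)$; decomposing $\phi=\phi^\perp+c(y)\,u_1'(z)$ orthogonally to this kernel in every fibre, one inverts the orthogonal piece in suitable weighted Hölder spaces by combining the spectral gap of $-\partial_z^2+1-3u_1^2$ (which gives exponential decay in the normal direction) with elliptic estimates in the tangential directions along $\Sigma$.

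Projecting the remaining equation onto $u_1'(z)$ and dividing by $\int u_1'^{\,2}$ yields the bifurcation equation, of schematic form
\[
J_\Sigma\, h \;=\; F(h),
\]
where $J_\Sigma$ is the Jacobi operator of $\Sigma$ and $F(h)$ collects the exponentially small contribution of $E_h$ together with the nonlinear feedback from $\phi^\perp$. Since $\Sigma$ is asymptotic to $C$, the weighted mapping properties of $J_\Sigma$ are dictated at infinity by those of the Jacobi operator $J_C$ on the cone, and the hypothesis $\nu_0^+<0$ is precisely what is needed for $J_\Sigma$ to possess a bounded right inverse on a space of functions decaying at infinity. A contraction mapping argument on a small ball in this weighted space then produces a solution $h$, and hence the desired bounded solution $u$ of \eqref{ac}.

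The main obstacle is the construction and uniform analysis of this right inverse for $J_\Sigma$: one needs weighted Schauder estimates that remain under control both in the asymptotically conical region and across the curved ``soul'' of $\Sigma$ near the vertex of $C$. The sign condition $\nu_0^+<0$ must be used twice: to kill the cokernel of $J_\Sigma$ on the chosen weighted space, and to guarantee that the perturbation $h$ decays strictly faster than $1$ at infinity — without this, neither would $E_h$ be integrable against the chosen weight nor would $N_h$ be contractive. Once this linear theory on the asymptotically conical minimal hypersurface is in place, the rest of the reduction is technically heavy but essentially standard.
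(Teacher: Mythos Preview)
Your overall strategy---take a Hardt--Simon smoothing $\Gamma$ of $C$, build the approximate solution from $u_1$ composed with the signed distance to a normal graph over $\Gamma$, split off the component along $u_1'$ in each fibre, and reduce to a Jacobi equation on $\Gamma$ solved using $\nu_0^+<0$---is exactly the paper's. But there is a genuine gap: you never introduce a small parameter, and the assertion that $E_h$ contributes something ``exponentially small'' is wrong. With $\Sigma$ fixed and the unscaled equation $\Delta u+u-u^3=0$, the error of $\bar u_0$ is $-H_z\,u_1'(z)$, where $H_z={\rm Tr}_{\mathring g}\,\mathring h^{(2)}\,z+O(z^2)$ is the mean curvature of the surface parallel to $\Sigma$ at height $z$; this is $O(1)$ on the curved core of $\Sigma$ and only $O(d_\Sigma^{-2})$ at infinity. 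No weighted norm makes it small, so there is no small ball on which to run a contraction. The paper remedies this by working with $\varepsilon^2\Delta u+u-u^3=0$ (equivalently, dilating $\Sigma$ by $1/\varepsilon$): the error is then $O(\varepsilon^2)$ in $\mathcal C^{0,\alpha}_{\varepsilon,-2}$ (Lemma~\ref{le:7.1}), and its projection onto $\dot u_\varepsilon$ is $O(\varepsilon^3)$ thanks to the parity cancellation (\ref{eq:parity}) (Lemma~\ref{le:7.2}). It is this polynomial smallness in $\varepsilon$, not any exponential decay, that closes the fixed-point argument; your mention of the concentric family $\lambda\Sigma$ ``as a degree of freedom'' is not the same thing---one member of that family must be blown up and used as the small parameter.

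A secondary packaging difference: instead of the single fibrewise splitting $\phi=\phi^\perp+c(y)\,u_1'$, the paper uses the \cite{dkwdg} device of writing the correction as $\chi_4\,v^\sharp+v^\flat$, with $v^\sharp$ living on $\Gamma\times\mathbb R$ and governed by $L_\varepsilon$, and $v^\flat$ a function on all of $\mathbb R^{n+1}$ governed by the uniformly coercive operator $\varepsilon^2\Delta-2$. This separates the analysis inside and outside the tubular neighbourhood (whose width in the paper grows like $\varepsilon^{\delta_*}\,d_\Gamma$, i.e.\ linearly rather than logarithmically), but your more classical decomposition would also work once the small parameter is in place.
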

As we will see, the solutions of (\ref{ac}) we construct are not unique and in fact they arise in families whose dimension can be computed. 

\begin{remark}
It should be clear that our construction, and hence the statement of Theorem~\ref{main2} does not reduce to the case of minimizing cone but extends to the case of minimal cones which are not necessarily minimizing but satisfy some natural nondegeneracy condition. Indeed, what is really needed in the proof of Theorem~\ref{main2} is the existence of a smooth minimal hypersurface $\Gamma$ which is asymptotic to the given minimal cone $C$. In the case where the minimal hypersurface is {\em nondegenerate}, in a sense to be made precise (see the last Remark in section~\ref{se:e10}), one should be able to modify the proof to construct solutions of (\ref{ac}) whose zero set is asymptotic to $C$.
\end{remark}

In view of the {\em de Giorgi's conjecture}, the results of Dancer and the above result, the following statement seems natural~: 

{\bf Classification of stable solution of the Allen-Cahn equation~:}  {\em Assume that $u$ is a bounded, stable solution of (\ref{ac}) in dimension less than or equal to $7$. Then, the level sets of $u$ should be hyperplanes.} 

This question parallels the corresponding well known conjecture concerning the classification of stable, embedded minimal hypersurface in Euclidean space~:   

{\bf Classification of stable, embedded minimal hypersurfaces~:} {\em The only stable, embedded minimal hypersurfaces in Euclidean space $\mathbb R^{n+1}$ are hyperplanes as long as the dimension of the ambient space is less than or equal to $7$.} 

This latter  problem is still open except when the ambient dimension is equal to $3$ where the result of Ficher-Colbrie and Schoen \cite{Fic-Col-Sch} guaranties that affine planes are the only stable embedded minimal surface in $\mathbb R^3$.

\section{Plan of the paper}

In the next section, we gather the necessary material to describe and analyze minimal cones. We then describe minimal hypersurfaces which are asymptotic to minimal cones. The specific case of strictly area minimizing cones is discussed in section 5. Next, in section 6, we introduce Fermi coordinates about a hypersurface and derive the expression of the Euclidean Laplacian in these coordinates. Section 7 is devoted to the definition of the approximate solution and some formal expansion of the solution we are looking for. In the next three sections, we derive the linear analysis which is relevant to our problem. We complete the proof of Theorem~\ref{main2} in section 10 where some fixed point argument is applied to solve a nonlinear problem. The last section of the paper, section 11, is concerned with the proof of Theorem~\ref{main1}.

\section{The geometry of minimal cones}
\label{se:3}

Assume that $\Lambda \subset S^{n}$ is a smooth, compact, oriented, minimal hypersurface which is embedded in $S^{n}$. The induced metric on $\Lambda$ will be denoted by $\bar g$ and the second fundamental  form will be denoted by $\bar h$. Recall that it is defined by
\[
	\bar h (t_1, t_2) : =  - g_{\circ} ( \nabla_{t_1}^{\circ} \bar N,  t_2) ,
\] 
for all $t_1, t_2 \in T_p \Lambda$. Here $\bar N$ is the normal vector field to $\Lambda$ in $S^n$, $g_\circ$ is the standard metric on $S^n$ an $\nabla^{\circ}$ denotes the covariant derivative in $S^n$.  It is also convenient to define the tensor $\bar h \otimes \bar h$ by the formula
\[
	\bar h \otimes \bar h (t_1, t_2) : =  g_{\circ} ( \nabla_{t_1}^{\circ} \bar N, 
	\nabla_{t_2}^{\circ} \bar N) ,
\]
for all $t_1, t_2 \in T_p \Lambda$.

Since we have assumed that the hypersurface $\Lambda$ is minimal in $S^n$, its mean curvature vanishes and hence 
\[
	{\rm Tr}_{\bar g} \, \bar h =0 .
\]
The  Jacobi operator about $\Lambda$ appears as the linearized mean curvature operator about $\Lambda$ when nearby hypersurfaces are parameterized as normal graphs over $\Lambda$. It is given by
\begin{equation}
\begin{array}{rlll}
	\bar J_\Lambda  & : = & \Delta_{\bar g}Ê+ \mbox{Tr}_{\bar g} (\bar h \otimes \bar h) + \mbox{Ric}_{g_\circ} ( \bar N, \bar N) \\[3mm]
	& = & \Delta_{\bar g}Ê+ {\rm Tr}_{\bar g}  (\bar h \otimes \bar h) +n-1 ,
\end{array}
\label{eq:jacop}
\end{equation}
where $\mbox{Ric}_{g_\circ}$ is the Ricci tensor of $S^n$ and ${\rm Tr}_{\bar g}  (\bar h \otimes \bar h)$ is the square of the norm of the second fundamental form (which, in the literature, is often denoted by $|A_\Lambda|^2$). 

Let us now give some important examples of minimal hypersurfaces in $S^n$ which are obtained as products of lower dimensional spheres. 
\begin{example}
We consider 
\[
	\Lambda_{n_1, n_2} := S^{n_1} (\rho_1) \times S^{n_2}(\rho_2) \subset S^n ,
\] 
where $n_1+ n_2 =  n-1$ and the radii $\rho_1$ and $\rho_2$ are chosen to be 
\[
	\rho_1 = \sqrt{\frac{n_1}{n-1}} \qquad \mbox{and} \qquad   \rho_2 : =  \sqrt{\frac{n_2}{n - 1}}.
\]
In this case, it is a simple exercise to check that the induced metric on $\Lambda_{n_1, n_2}$ is given by
\[
	\bar g  = \rho_1^2 \, g_{1} + \rho_2^2  \, g_{2} ,
\]
and that the second fundamental form (for some choice of the normal vector field) reads
\[
	\bar h  = \rho_1 \, \rho_2 \, \left( g_{1}  -  g_{2} \right) ,
\]
where $g_i$ is the induced metric on $S^{n_i}$. In particular, the hypersurface $\Lambda_{n_1, n_2}$ is minimal in $S^{n}$ since 
\[
	{\rm Tr}_{\bar g} \, \bar h = n_1 \, \frac{\rho_2}{\rho_1}  - n_2 \, \frac{\rho_1}{\rho_2}  = 0 . 
\]
Also, in this case, the Jacobi operator about $\Lambda_{n_1, n_2}$ is given by
\[
	\bar J_{n_1, n_2} =   \Delta_{\bar g}Ê +  2n -2 , 
\]
since 
\[
	{\rm Tr}_{\bar g} \, (\bar h \otimes \bar h) = \left(\frac{r_2}{r_1}\right)^2 \, n_1 + 
	\left(\frac{r_1}{r_2}\right)^2 \, n_2  = n_2 + n_1 = n-1.
\]
\label{ex:1}
\end{example}
This list of examples does not exhaust the list of all minimal hypersurfaces in $S^n$ and further examples of minimal hypersurfaces in $S^{n}$ can be found for example
in \cite{Law}, \cite{Pit-Rub} or in \cite{Kap-Seo}.

Given  an oriented embedded minimal hypersurface $\Lambda$ in $S^n$, we define 
\[
	C_\Lambda : = \{ e^t \, z   \, : \, z  \in  \Lambda  , \quad  t \in \mathbb R \}  \subset \mathbb R^{n+1},
\]
to be the cone over $\Lambda$. In this case, $\Lambda$ is usually referred to as the {\em link of the cone}. The induced metric on $C_\Lambda$ is given by
\begin{equation}
	g : =  e^{2t} \, (dt^2 + \bar g ).
\label{metriccone}
\end{equation}
The normal vector field on $C_\Lambda$ can be chosen in such a way that it coincides with the unit normal vector field of $\Lambda$ at each point of $C_\Lambda \cap S^{n}$. Observe that, when $n\geq 2$,  $S^n \setminus \Lambda$ has two connected components (this follows at once from the maximum principle which implies that $S^n$ does not contain two embedded minimal hypersurfaces which are disjoint) and hence $\mathbb R^{n+1} \setminus  C_\Lambda$ also has two connected components which will be denoted by $\mathbb R^{n+1}_\pm$. We can assume that $\mathbb R^{n+1}_+$  is the one toward which the normal vector field on $C_\Lambda$ is pointing. 

With these choices, the second fundamental form about $C_\Lambda$ reads
\begin{equation}
	h = e^t  \, \bar h. 
\label{metricconebis}
\end{equation}
Since we have assumed that $\Lambda$ is a minimal submanifold in $S^{n}$, we conclude that $C_\Lambda$ is also a minimal submanifold of $\mathbb R^{n+1}$. We also conclude that $J_C$, the Jacobi operator about $C := C_\Lambda$, is given by
\begin{equation}
	J_C  : = e^{-2t} \, \left( \partial_t^2 + (n-2) \, \partial_t  + \Delta_{\bar g}Ê + {\rm 
	Tr}_{\bar 	g} \, (\bar h \otimes \bar h)  \right).
\label{eq:jacopC}
\end{equation}

We will denote by 
\[
	\mu_0 < \mu_1 \leq \mu_2 \leq  \ldots
\] 
the eigenvalues of the operator 
\[
- (\Delta_{\bar g}Ê + {\rm Tr}_{\bar g} \, (\bar h \otimes  \bar h)) ,
\] 
and we will denote by $\varphi_j$ the eigenfunction which is associated to the eigenvalue $\mu_j$ and which is normalized to have $L^2(\Lambda)$ norm equal to $1$. Observe that the $\varphi_j$ are also the eigenfunctions of the Jacobi operator $-J_\Lambda$ associated to the eigenvalue $\mu_j+1$.  Also observe that $\mu_0 \leq 0$ since the potential ${\rm Tr}_{\bar g} \, (\bar h \otimes  \bar h)) \geq 0$.

We recall the following~:
\begin{definition}
The minimal cone $C_\Lambda$ is said to be {\em stable} if 
\[
	\mu_0 \geq \left(\frac{n-2}{2}\right)^2  , 
\]
and it is said to be {\em strictly stable} if
\[
	\mu_0 > \left(\frac{n-2}{2}\right)^2 .
\]
\label{de:3.1}
\end{definition}

Finally, we define the {\em characteristic roots} of the operator $J_C$ by the formula
\begin{equation}
	\gamma_j^\pm : =  \frac{2-n}{2} \pm \sqrt{Ê\left( \frac{n-2}{2}\right)^2 + 
	\mu_j}.
\label{eq:3-99}
\end{equation}
The characteristic roots appear in the asymptotic behavior of the solutions of  
\[
J_C w =0.
\]  
Indeed, looking for solutions of this equation of the form 
\[
w  = e^{\gamma t} \, \varphi_j ,
\]
one finds that $\gamma$ satisfies the characteristic equation
\[
\gamma^2 + (n-2) \, \gamma - \mu_j = 0 ,
\]
and hence $\gamma = \gamma_j^\pm$.  Observe that, in the special case where $\gamma_j^\pm = \frac{2-n}{2}$, then the two independent solutions of the above equation are of the form 
\[
w_j^+ =  e^{\frac{2-n}{2} t} \varphi_j, \qquad \mbox{and} \qquad  w_j^- = t \, e^{\frac{2-n}{2} t} \varphi_j.
\]

We have the important definition which is borrowed from \cite{Caf-Har-Sim}~:
\begin{definition}
The {\em indicial roots} of the operator $J_C$ are defined by
\begin{equation}
	\nu_j^\pm : = \Re \left( \frac{2-n}{2} \pm \sqrt{Ê\left( \frac{n-2}{2}\right)^2 + \mu_j}\right).
\label{eq:3-9}
\end{equation}
\label{de:indroot}
\end{definition}
The indicial roots of $J_C$ play a key role in the deformation theory of minimal cones and minimal hypersurfaces  which are asymptotic to minimal cones \cite{Caf-Har-Sim}, \cite{Cha} and, as we will see, they also play a very important role in our construction.  Observe that we always have 
\[
\nu_0^+ \leq 0
\]
since $\mu_O \leq 0$. 

We illustrate all these definitions in the case described in  Example~\ref{ex:1}. 
\begin{example}
Keeping the notations introduced in Example~\ref{ex:1}, we consider $C_{n_1, n_2}$ to be the cone over $\Lambda_{n_1, n_2}$. According to the above analysis,  this cone is a minimal hypersurface in $\mathbb R^{n+1}$. Moreover, the Jacobi operator about $C_{n_1, n_2}$ is given by
\[
	J_{n_1, n_2} : =  e^{-2t} \, \left( \partial_t^2 + (n-2) \, \partial_t  + \Delta_{\bar g}Ê 
	+ n-1 \right) .
\]
Surprisingly, this operator only depends on $n_1$ and $n_2$ through the metric $\bar g$ which is used to compute the Laplace-Beltrami operator. The indicial roots of $J_{n_1, n_2}$ can be computed explicitly in terms of the spectrum of the Laplace-Beltrami operator on the spheres $S^{n_1}$ and $S^{n_2}$. Of interest, will be the values of $\nu_0^\pm$ which are given by 
\begin{equation}
	\nu_0^\pm : = \Re \, \left( \frac{2-n}{2} \pm \frac{1}{2}Ê\, \sqrt{Ên^2 -8n + 8 } 
	\right).
\label{eq:gamma0}
\end{equation}
In particular, the cone $C_{n_1, n_2}$ is  {\em strictly stable} when $n+1 \geq 8$.

When $n+1 \leq 7$, we have
\[
\nu_0^+ = \nu_0^- = \frac{2-n}{2} ,
\]
while, when $n +1 \geq 8$, we have
\[
3-n < \nu_0^- \leq  4-n    \qquad \mbox{and} \qquad  - 2 \leq  \nu^+_0 <  -1.
\]
Finally, in the special case where $n+1=8$, we have 
\[
\nu_0^- =-3 \qquad \mbox{and} \qquad \nu^+_0 =-2 .
\]
\end{example} 

\section{Minimal hypersurfaces which are asymptotic to a minimal cone.}

In this section, we are interested in the existence and properties of minimal hypersurfaces which are smooth, embedded in $\mathbb R^{n+1}$ and which are asymptotic to a given minimal cone $C = C_\Lambda$ where $\Lambda$ is an embedded minimal hypersurface in $S^n$. The material of this section is essentially borrowed from  \cite{Har-Sim} and \cite{Cha}. 

As far as the existence of such a minimal hypersurface is concerned, we have a very general result but,  before, stating the result, we recall the definition of {\em minimizing cones}  \cite{Har-Sim}~:
\begin{definition}
\cite{Har-Sim} A minimal cone $C = C_\Lambda$ is said to be {\em minimizing} if 
\[
	{\rm Vol}_n ( C \cap B_1) \leq {\rm Vol}_n ( S ) ,
\]
for any hypersurface $SÊ\subset R^{n+1}$ such that $\partial S = \Lambda$, where $B_1$ denotes the unit ball in $\mathbb R^{n+1}$.
\label{de:4.1}
\end{definition}
In particular, a minimizing cone is necessarily stable in the sense of Definition~\ref{de:3.1}. 

As already mentioned, when $n \geq 2$, $\mathbb R^{n+1} \setminus C$ has two connected components which we denote by $\mathbb R^{n+1}_\pm$. When $C$ is a minimizing cone, the existence of smooth minimal hypersurfaces lying on one side of $C$, i.e. which are embedded in one of the connected components of $\mathbb R^{n+1} \setminus C$ follows from Theorem 2.1 in \cite{Har-Sim}. In fact, the authors prove that there are two families of such hypersurfaces which are embedded in the two different connected components of $\mathbb R^{n+1} \setminus C$.
\begin{theorem}
\cite{Har-Sim} Assume that $C$ is {\em minimizing cone}. Then, there exists two distinct oriented,  embedded minimal hypersurfaces 
\[
	\Gamma_\pm \subset \mathbb R^{n+1}_\pm, 
\] 
enjoying the following properties~:
\begin{itemize}
	\item[(i)] The minimal hypersurface $\Gamma_\pm$ sits on one side of $C$,  namely $\Gamma_+ $ (resp.  $\Gamma_- $) is embedded in $\mathbb R^{n+1}_+$ (resp. $\mathbb R^{n+1}_-$) ;
	\item[(ii)] The distance from $\Gamma_\pm$ to the origin is equal to $1$ ;
	\item[(iii)] The hypersurface $\Gamma_\pm$ is asymptotic to $C$, i.e. $\Gamma_\pm$ is a normal graph over $C$ for some function $v_\pm$ which tends to $0$ at infinity as a (negative)  power of the distance to the origin ;
	\item[(iv)] For all ${\bf e} \in \mathbb S^{n}\setminus \Lambda$, the half affine line $\mathbb R^+ {\bf e}$ meets $\Gamma_+ \cup \Gamma_-$ at, at most, one point. 	  
\end{itemize}
\label{th:4.1}
\end{theorem}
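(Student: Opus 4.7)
The plan is to follow the strategy pioneered by Hardt and Simon: produce $\Gamma_\pm$ as smooth limits of area-minimizing currents constrained to one side of $C$, and use the minimizing property of $C$ together with the indicial-root structure of $J_C$ to derive the asymptotic description.

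First, for each large $R>0$, I would minimize area in the class of integer rectifiable currents in $\overline{B_R}$ whose support lies in $\overline{\mathbb{R}^{n+1}_+}$ and whose boundary agrees with that of $C\cap B_R$ on $\partial B_R$. Existence of a minimizer $S_R^+$ is standard (compactness of integral currents of bounded mass plus lower semicontinuity of mass). The first substantive point is to show that $S_R^+$ is not trapped against the obstacle, i.e.\ does not coincide with $C \cap \overline{B_R}$. This is where the hypothesis that $C$ be a \emph{minimizing} (and not merely stable) cone enters: near the isolated singularity at $0$ one can construct a smooth competitor lying strictly inside $\mathbb{R}^{n+1}_+$ of strictly smaller area than $C\cap B_R$, using a suitable Jacobi field of $J_C$ produced from the indicial-root analysis of section 3 (together with a cut-off at infinity).

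Next I would invoke the Hardt--Simon regularity theory for one-sided area-minimizers. Because the singular set of an area-minimizing hypersurface has codimension at least seven, and because $S_R^+$ is strictly separated from $C$ in its interior by a strong maximum principle argument, $S_R^+$ is a smooth embedded hypersurface in $B_R\cap \mathbb{R}^{n+1}_+$. The monotonicity formula yields uniform density bounds, so a subsequence $R_k\to\infty$ produces a limiting smooth complete area-minimizing hypersurface $\Gamma_+\subset \mathbb{R}^{n+1}_+$. A tangent-cone-at-infinity argument, together with the uniqueness of the one-sided construction, forces the unique tangent cone at infinity of $\Gamma_+$ to be $C$ itself. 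To obtain the quantitative asymptotics of (iii), I would write $\Gamma_+$ outside a large ball as the normal graph of a function $v_+$ over $C$; in the Fermi coordinates introduced later in the paper, $v_+$ satisfies a nonlinear perturbation of $J_C v=0$. Separation of variables in the cone coordinates $(t,z)=(\log r, z)$, together with the spectral decomposition of $-(\Delta_{\bar g}+\mathrm{Tr}_{\bar g}(\bar h\otimes \bar h))$ by the $\varphi_j$, shows that each mode decays like $e^{\gamma_j^\pm t}=r^{\gamma_j^\pm}$; the minimizing property of $\Gamma_+$ selects the decaying branches, giving $v_+=O(r^{\nu_0^+})$ and in particular $v_+\to 0$ at infinity as a negative power of $r$.

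Finally, after a global dilation I normalize so that $\mathrm{dist}(\Gamma_+,0)=1$, which gives (ii). Property (iv) follows from the observation that $\lambda\Gamma_+$ is again a smooth minimal hypersurface asymptotic to $C$ for every $\lambda>0$; if two dilates $\lambda_1\Gamma_+$ and $\lambda_2\Gamma_+$ touched with $\lambda_1\ne \lambda_2$, the strong maximum principle for minimal hypersurfaces would force them to coincide, a contradiction. Hence $\{\lambda\Gamma_+:\lambda>0\}$ is a smooth foliation of $\mathbb{R}^{n+1}_+\setminus\{0\}$, so every half-line from $0$ in $\mathbb{R}^{n+1}_+$ meets $\Gamma_+$ in at most one point; the construction of $\Gamma_-$ is entirely symmetric. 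The main obstacle is the non-degeneracy step together with the passage from abstract current convergence to a genuine graph with prescribed decay rate: both depend essentially on the minimizing property of $C$ and on the indicial-root analysis developed in section 3.
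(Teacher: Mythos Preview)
The paper does not prove this statement at all: Theorem~\ref{th:4.1} is quoted verbatim from Hardt and Simon \cite{Har-Sim} (it is even prefaced by the citation), and the authors simply record it together with the observation that (iv) yields the foliation $\{\lambda\Gamma_\pm\}_{\lambda>0}$ of $\mathbb R^{n+1}_\pm\setminus C$. So there is no ``paper's own proof'' to compare your proposal against.

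That said, your sketch is a faithful outline of the Hardt--Simon argument in \cite{Har-Sim}: one-sided mass minimization in balls $B_R$, the competitor construction near the singularity to rule out $S_R^+=C\cap B_R$, regularity and strict separation from $C$, a diagonal limit $R\to\infty$, identification of the tangent cone at infinity, and finally the linear analysis of $J_C$ to upgrade convergence to a graphical decay estimate. Two small cautions. First, the step ``the minimizing property selects the decaying branches, giving $v_+=O(r^{\nu_0^+})$'' overshoots what Theorem~\ref{th:4.1}(iii) asserts and what the one-sided minimization alone yields; the sharp rate $r^{\nu_0^+}$ requires the cone to be \emph{strictly} area minimizing (see Definition~\ref{de:samc} and the Proposition following it), whereas (iii) only claims decay like some negative power, which is the content of Theorem~\ref{th:4.2}. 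Second, the non-coincidence step is not really ``a Jacobi field of $J_C$ together with a cut-off''; in \cite{Har-Sim} it is a genuine competitor/obstacle argument exploiting the isolated singularity, and the Jacobi analysis enters only afterwards, in the asymptotic description.
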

Observe that property (iv) implies that, the sets $\lambda \, \Gamma_\pm$ for $\lambda >0$ forms a foliation of $\mathbb R^{n+1}_\pm \setminus C$. Property (iii) implies that asymptotics of $\Gamma_\pm$ are well understood and in fact it is related to the indicial roots of $J_C$. This property together with the fact that $ \Gamma_\pm$ lie on one side of $C$ implies the following result which is also due to Hardt and Simon \cite{Har-Sim}~:
\begin{theorem}
\cite{Har-Sim} Assume that the cone $C$ is {\em stable} and that $\Gamma$ is a minimal hypersurface which is defined away from a compact of $\mathbb R^{n+1}$ and which, at infinity, is asymptotic to $C$ and lies on one side of $C$. Further assume that $\Gamma$ is a normal graph over the cone $C$ for a  function $v$, then either 
\[
	v (t,z) =  (b \, t + a) \, e^{\gamma_0^+ \, t} \, \varphi_0 (z) + {\mathcal O} 	
	(e^{ (\gamma_0^+ - 	\delta) \, t}), 
\]
or 
\[
	v (t,z) =  a  \, e^{\gamma_0^- \, t} \, \varphi_0 (z) + {\mathcal O} (e^{ (\gamma_0^- 
	- \delta) \, t}),
\]
for $z \in \Lambda$ and $t >0$ large enough. Here $\delta >0$,  $b=0$ and $a \neq 0$ unless $\gamma_0^\pm = \frac{2-n}{2}$  in which case $b \neq 0$. 
\label{th:4.2}
\end{theorem}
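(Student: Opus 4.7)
In the exponential cylinder coordinates $(t,z)\in[T_0,\infty)\times\Lambda$ in which the cone metric reads $e^{2t}(dt^2+\bar g)$, I would write $\Gamma$ as the normal graph $\exp_{(t,z)}(v(t,z)\,N)$. The minimality of $\Gamma$ becomes, after multiplication by $e^{2t}$, a nonlinear elliptic equation of the form
\[
J_C\, v \;=\; Q(v,\nabla v,\nabla^2 v),
\]
where $J_C$ is the Jacobi operator \eqref{eq:jacopC} and $Q$ collects all terms which are at least quadratic in $v$ and its first two derivatives; by hypothesis $v\to 0$ as $t\to\infty$.

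I would then separate variables via the expansion $v(t,z)=\sum_{j\ge 0} v_j(t)\,\varphi_j(z)$, which reduces the PDE to the scalar ODE system
\[
v_j''(t)+(n-2)\,v_j'(t)-\mu_j\, v_j(t) \;=\; f_j(t), \qquad j\ge 0,
\]
whose homogeneous solutions are $e^{\gamma_j^\pm t}$ (or $e^{\frac{2-n}{2}t}$ together with $t\,e^{\frac{2-n}{2}t}$ in the resonant case $\gamma_j^+=\gamma_j^-$). Stability of $C$ makes $\gamma_0^\pm$ real with $\gamma_0^-\le\gamma_0^+\le 0$; for every $j\ge 1$ the strict inequality $\mu_j>\mu_0$ forces $\gamma_j^-<\gamma_0^-$ and $\gamma_j^+>\gamma_0^+$, and the decay assumption on $v$ excludes every contribution proportional to $e^{\gamma_j^+ t}$ with $\gamma_j^+>\gamma_0^+$ (in particular those with $\gamma_j^+\ge 0$). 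Consequently the slowest decaying piece of $v$ sits in the $j=0$ sector, and every mode with $j\ge 1$ decays at least like $e^{\gamma_1^- t}$, strictly faster than $e^{\gamma_0^+ t}$.

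A variation-of-constants formula for the $v_0$-equation, fed with the crude polynomial decay implied by the ``asymptotic to $C$'' assumption and iterated through a bootstrap, then yields
\[
v(t,z) \;=\; a\,e^{\gamma_0^+ t}\,\varphi_0(z) \;+\; b\,e^{\gamma_0^- t}\,\varphi_0(z) \;+\; \mathcal{O}\!\left(e^{(\gamma_0^+-\delta)t}\right),
\]
for some small $\delta>0$, with the expected replacement $(a+bt)\,e^{\frac{2-n}{2}t}\,\varphi_0$ in the resonant case. The quadratic source $f_j$ is of order at most $e^{2\gamma_0^+ t}$, which is strictly faster than $e^{\gamma_0^+ t}$ since $\gamma_0^+<0$, and so it can be absorbed into the remainder. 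To pin the coefficients down, I would invoke the one-sidedness hypothesis: $v$ has fixed sign and $\varphi_0>0$ is the simple first eigenfunction, so the leading coefficient cannot vanish. If $a\ne 0$ the $e^{\gamma_0^+ t}$ mode dominates, $b$ is absorbed into the remainder, and one recovers the first alternative with $b=0$; otherwise $a=0$ forces $b\ne 0$ and yields the second alternative.

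The main technical obstacle is the rigorous bootstrap together with the exclusion of the case $a=b=0$. The bootstrap demands careful bookkeeping of potential nonlinear resonances between the quadratic source and the homogeneous kernel of $J_C$, and it must verify that the interval $(\gamma_0^+-\delta,\gamma_0^+]$ is free of any characteristic root $\gamma_j^\pm$ — a purely spectral condition on $\Lambda$ which holds for $\delta$ small enough. Ruling out $a=b=0$ rests on the strong maximum principle applied to $v$, together with the simplicity of $\varphi_0$; both ingredients belong to the weighted Schauder analysis on conical ends developed in \cite{Caf-Har-Sim} and \cite{Cha}, upon which I would rely at this final step.
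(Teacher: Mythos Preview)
The paper does not actually give a proof of this theorem; it is quoted as a result of Hardt and Simon \cite{Har-Sim} and used as input. Your outline is broadly in the spirit of the original Hardt--Simon argument (linearise to $J_C$, expand in the eigenbasis of $\Lambda$, analyse the resulting ODEs, and use one-sidedness to pin down the leading mode), so there is nothing to compare within the present paper.

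That said, there is a genuine gap in your sketch. You claim that the decay hypothesis $v\to 0$ ``excludes every contribution proportional to $e^{\gamma_j^+ t}$ with $\gamma_j^+>\gamma_0^+$'' and hence that ``every mode with $j\ge 1$ decays at least like $e^{\gamma_1^- t}$''. This is not correct: since $\gamma_j^+=\frac{2-n}{2}+\sqrt{(\frac{n-2}{2})^2+\mu_j}$ is negative whenever $\mu_j<0$, there may well be finitely many indices $j\ge 1$ with $\gamma_0^+<\gamma_j^+<0$. For such $j$, the homogeneous solution $e^{\gamma_j^+ t}\varphi_j$ decays (so it is \emph{not} ruled out by $v\to 0$) yet decays \emph{more slowly} than $e^{\gamma_0^+ t}$. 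If any of these coefficients were nonzero, the slowest piece of $v$ would sit in a higher mode, and your dichotomy would fail.

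The missing ingredient is precisely the one-sidedness hypothesis, but it has to be invoked \emph{here}, not only at the end. Because $\Gamma$ lies on one side of $C$, the graph function $v$ has a sign; if the leading term were $c\,e^{\gamma_j^+ t}\varphi_j$ with $j\ge 1$, then $\varphi_j$ (being $L^2$-orthogonal to the positive ground state $\varphi_0$) changes sign on $\Lambda$, forcing $v$ to change sign for large $t$. This contradiction kills all the potentially slower modes $j\ge 1$ and restores your claim that the leading behaviour is governed by the $j=0$ sector. Once that is in place, your bootstrap and the final appeal to one-sidedness (to exclude $a=b=0$) proceed as you describe.
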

In other words, for minimal hypersurfaces which are asymptotic to a stable cone and are embedded on one side  of the cone, there are only two possible asymptotic behavior. 

In this statement, we recall that $\varphi_0$ is the eigenfunction of $- J_\Lambda$, the Jacobi operator about  $\Lambda$, which is associated to the first eigenvalue of $-J_\Lambda$. 

More examples of embedded minimal hypersurfaces which are asymptotic to a given minimal cone $C$ and which lie on one side of the cone $C$ can be found for example in \cite{Cha}.

In the case where 
\[
	\Lambda = \Lambda_{n_1, n_2} : =  S^{n_1} (r_1) \times S^{n_2} (r_2),
\]
is the minimal hypersurface of $S^n$ which is described in Example~\ref{ex:1}, the minimal cone $C_{n_1, n_2}$ is invariant under the action of a large group of symmetries, namely
\[
	O(n_1+1) \times O(n_2+1) ,
\]
and a parameterization of the minimal hypersurfaces $\Gamma_\pm$ can be obtained by solving some second order ordinary differential equation.  This is the point of view which is taken in \cite{Ale-Bar} where one looks for minimal  hypersurfaces which can be parameterized as 
\[
	\mathbb R \times S^{n_1} \times S^{n_2} \ni (s, z_1, z_2) \longrightarrow (x(s) \, 
	z_1 , y(s) \, z_2) \in \mathbb R^{n+1} .
\]
The fact that the mean curvature of this hypersurface is zero reduces to the ordinary differential equation
\[
	\frac{y'' \, x'- x'' \, y'}{(x')^2+ (y')^2} + n_1 \, \frac{y'}{x} - n_2 \, \frac{x'}{y}  = 0 ,
\]
and, without loss of generality, we can assume that the generating curve 
\[
	\mathbb R  \ni s  \longrightarrow (x(s) , y(s)) \in \mathbb R^{2} ,
\]
is  parameterized by arc length, namely
\[
	(x')^2 + (y')^2 = 1.
\]
where $'$ denotes the derivative with respect to $s$. As in \cite{Ale-Bar}, we define the functions $u$ and $v$ by the identities
\[
	\tan u = \frac{y}{x} \qquad \mbox{and} \qquad \tan v = \frac{y'}{x'}. 
\]
Then, one can check that the system of equations satisfies by $x$ and $y$ 
can also be written as 
\begin{equation}
	\left\{
	\begin{array}{rllll}
	u' & = & \cos u \, \sin u \, \sin (u-v) \\[3mm]
	v' & = & n_1 \, \sin u \, \sin v - n_2 \, \cos u \, \cos v
	\end{array}
	\right.
\label{eq:dynsyst}
\end{equation}
It is proven in \cite{Ale-Bar} that there exists a heteroclinic solution to this system that connects two stationary points of (\ref{eq:dynsyst}). This solution gives rise to an embedded minimal hypersurfaces which are asymptotic  to $C_{n_1, n_2}$. Moreover, in dimension $n \geq 7$, they prove that these hypersurfaces are one of the two connected components of $\mathbb  R^{n+1} \setminus C_{n_1, n_2}$ and that family of hypersurfaces $\lambda \, \Gamma$ forms a foliation of the connected components of $\mathbb  R^{n+1} \setminus C_{n_1, n_2}$, while, when $n \leq 6$, these hypersurfaces intersect $C_{n_1, n_2}$ infinitely many times.

\section{The case of strictly area minimizing cones}

In \cite{Har-Sim} is introduced the notion of {\em strictly area minimizing cone}.
\begin{definition}
\label{de:samc}
A minimal cone $C = C_\Lambda$ is said to be {\em strictly area minimizing} if there exists a constant $C >0$ such that, for all $\varepsilon >0$ small enough
\[
	{\rm Vol}_n ( C \cap B_1) \leq {\rm Vol}_n ( S)  - C \, \varepsilon^n,
\]
for any hypersurface $SÊ\subset R^{n+1}\setminus B_\varepsilon$ such that $\partial S = \Lambda$, where  $B_1$ denotes the unit ball and $B_\varepsilon$ the ball of radius $\varepsilon$  in $\mathbb R^{n+1}$.
\end{definition}

In the case where the cone $C = C_\Lambda$ is {\em strictly area minimizing}, we  have the following result \cite{Har-Sim} which states that the minimal hypersurfaces $\Gamma_\pm$ defined in Theorem~\ref{th:4.1} approach $C$ at the slowest possible rate predicted by Theorem~\ref{th:4.2}~:
\begin{proposition} 
Assume that the cone $C = C(\Sigma)$ is {\em strictly area minimizing}, then the minimal surface defined in Theorem~\ref{th:4.1} is, at infinity, a normal graph over the cone $C$ for a function $v$ which can be expanded as either 
\[
	v (t,z) =  a  \, e^{\gamma_0^+ \, t} \, \varphi_0 (z) + {\mathcal O} 	
	(e^{ (\gamma_0^+ - 	\delta) \, t}), 
\]
for some $a \neq 0$, if $\gamma_0^\pm \neq  \frac{2-n}{2}$ or 
\[
	v (t,z) =  (b \, t + a)  \, e^{\gamma_0^+ \, t} \, \varphi_0 (z) + {\mathcal O} 
	(e^{ (\gamma_0^+ - \delta) \, t}),
\]
for some $b \neq 0$, if $\gamma_0^\pm = \frac{2-n}{2}$. Here, $z \in \Lambda$, $t >0$ is large enough and $\delta >0$. 
\end{proposition}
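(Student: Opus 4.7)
The plan is to proceed by contradiction, using Theorem~\ref{th:4.2} as the starting point. That theorem already restricts the asymptotic of $v$ to two possible forms---the slow-decay form involving $e^{\gamma_0^+ t}$ (with the logarithmic factor $bt+a$ in the degenerate case $\gamma_0^+=\gamma_0^-=\frac{2-n}{2}$) and the fast-decay form involving $e^{\gamma_0^- t}$. The proposition claims that the strict area minimizing hypothesis of Definition~\ref{de:samc} rules out the fast-decay form and, in the degenerate case, forces $b\neq 0$. I would therefore suppose the contrary and derive a contradiction by constructing a competitor for $C$ violating the strict minimizing bound.

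The competitor is built from the foliation $\{\Gamma_+^\lambda := \lambda\Gamma_+\}_{\lambda > 0}$ of one side of $C$ given by Theorem~\ref{th:4.1}(iv). For $\lambda=\varepsilon$ small, $\Gamma_+^\varepsilon$ is a minimal hypersurface at distance $\varepsilon$ from the origin which is a normal graph over $C$ (away from its innermost point) given by $v_\varepsilon(t,z)=\varepsilon\,v(t-\log\varepsilon,z)$; under the fast-decay ansatz, $v_\varepsilon(t,z) = \mathcal{O}(\varepsilon^{1-\gamma_0^-}e^{\gamma_0^- t}\varphi_0(z))$. A natural candidate $S_\varepsilon$ with $\partial S_\varepsilon=\Lambda$ and $S_\varepsilon\subset\mathbb{R}^{n+1}\setminus B_\varepsilon$ consists of the outer piece $C\cap(B_1\setminus B_{K\varepsilon})$ glued through a thin collar at $\partial B_{K\varepsilon}$ to the inner piece $\Gamma_+^\varepsilon\cap B_{K\varepsilon}$ (which smoothly closes off the cone tip), with $K>1$ an auxiliary matching scale to be optimized.

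The key computation evaluates the excess $\text{Vol}_n(S_\varepsilon)-\text{Vol}_n(C\cap B_1)$ by combining three standard tools: the second-variation formula
\[
\text{Vol}_n(\Gamma_w\cap A)-\text{Vol}_n(C\cap A)=\tfrac{1}{2}\int_{C\cap A}\bigl(|\nabla w|^2-|A_C|^2 w^2\bigr)\,dV_C+\mathcal{O}(w^3),
\]
integration by parts (which, by minimality of $\Gamma_+^\varepsilon$ and the resulting equation $J_C v_\varepsilon+Q(v_\varepsilon)=0$ with $Q$ quadratic, collapses the quadratic bulk into boundary terms modulo a cubic remainder), and the monotonicity formula applied to the inner piece. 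Inserting the fast-decay asymptotic and using the identity $2\gamma_0^-+n-2=-2\sqrt{((n-2)/2)^2+\mu_0}$, one finds that with the optimal matching $K=K(\varepsilon)$ the excess is of order $\varepsilon^{n+2\sqrt{((n-2)/2)^2+\mu_0}}$, strictly smaller than $\varepsilon^n$; this contradicts the lower bound $c\,\varepsilon^n$ from Definition~\ref{de:samc}. The degenerate case is handled by an analogous computation: the $b=0$ ansatz yields a strictly smaller-order excess, whereas the correct $b\neq 0$ term produces a $(\log\varepsilon)^2\varepsilon^n$ correction that saturates the bound.

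The main obstacle is the careful bookkeeping needed to isolate the leading non-trivial contribution to the excess: the collar area, the defect in the monotonicity formula, and the cubic remainder in the area expansion must all be simultaneously controlled, and the matching scale $K=K(\varepsilon)$ chosen so that the claimed leading order actually emerges. This is the delicate analysis of Hardt and Simon \cite{Har-Sim}, and the degenerate case requires additional care because the two branches of Theorem~\ref{th:4.2} coincide.
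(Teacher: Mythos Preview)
The paper does not actually give a proof of this proposition: it is stated immediately after Definition~\ref{de:samc} as a result taken from Hardt and Simon \cite{Har-Sim}, and the text moves on directly to examples. So there is no ``paper's own proof'' to compare against; the proposition is simply cited.

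Your proposal is therefore more than the paper offers. As an outline of the Hardt--Simon argument it is reasonable: the contradiction strategy, the competitor built from the foliation $\lambda\Gamma_+$ capped off inside $B_{K\varepsilon}$ and glued to the truncated cone, and the observation that under the fast-decay ansatz the area defect scales like $\varepsilon^{n+2\sqrt{((n-2)/2)^2+\mu_0}}=o(\varepsilon^n)$ are all the right ingredients. You also correctly flag that the honest work lies in controlling the collar, the monotonicity defect, and the cubic remainder simultaneously; this is exactly where the effort in \cite{Har-Sim} goes, and your sketch does not pretend to carry it out. One small caution: the second-variation identity you write is for normal graphs over the cone, but the inner piece $\Gamma_+^\varepsilon\cap B_{K\varepsilon}$ contains the point closest to the origin where $\Gamma_+^\varepsilon$ is not a graph over $C$; in the actual argument that region is handled separately via the monotonicity formula (as you indicate), not via the graph expansion, so the splitting into inner and outer pieces has to be made before invoking the second-variation formula rather than after.
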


Checking whether a minimal cone is {\em strictly area minimizing}  can be a hard problem.  For example, $\mathbb R^2 \subset \mathbb R^3$, which can be considered as a cone over the unit circle $S^1 \subset S^2$, is {\em area minimizing} but is not {\em strictly area minimizing} while, for $n \geq 3$, $\mathbb R^n \subset \mathbb R^{n+1}$, which can be considered as a cone over the unit sphere $S^{n-1} \subset S^n$, is {\em strictly area minimizing} \cite{Har-Sim}. Hopefully, there are a lot of minimal cone which are known to be {\em strictly area minimizing} \cite{Law}. For example, it is proven in \cite{Law} that $C_{n_1, n_2}$ are {\em strictly area minimizing} provided $n_1+ n_2  = n - 1 \geq 6$,  and this is also reflected in the analysis of (\ref{eq:dynsyst}) which is performed in \cite{Ale-Bar}. 

\section{Local coordinates near an embedded hypersurface and expression of the Laplacian}

\subsection{Local coordinates near a hypersurface} 
In this section, we assume that $n \geq 1$ and that $\Gamma$ is an oriented smooth hypersurface embedded in $ \mathbb R^{n+1}$ which is asymptotic to a minimal cone. We first define the Fermi coordinates about $\Gamma$ and then, we provide some asymptotic  expansion of the Euclidean Laplacian in Fermi coordinates about $\Gamma$.

We denote by $g_{e}$ the Euclidean metric in $\mathbb R^{n+1}$. We denote by $N$ the unit normal vector field on $\Gamma$ which defines the orientation of $\Gamma$ and we define 
\begin{equation}
	Z (y, z) : = y + z \, N (y) ,
\label{eer}
\end{equation}
where $y \in \Gamma$ and  $z \in \mathbb R$. The implicit function theorem implies that $Z$ is a local diffeomorphism  from a neighborhood of a point $(y, 0) \in \Gamma \times \mathbb R$ onto a neighborhood of $y \in \mathbb R^{n+1}$. 

Given $z \in \mathbb R$, we define $\Gamma_z$ by
\[ 
	\Gamma_z := \{ Z (y,z)   \in \mathbb R^{n+1} \, : \,  y \in \Gamma \} . 
\]
For $z$ small enough (depending on the point $y \in \Gamma$ where one is working), $\Gamma_z$ restricted to a neighborhood of $y$ is a smooth hypersurface which is referred to as the {\em hypersurface parallel to $\Gamma$ at height $z$}. 

If $X$ denotes a parameterization of a neighborhood of $y$ in $\Gamma$, we set
\[
	X_z  :=  X  + z \, X^* N , 
\]
which, for $z$ small enough, is a parameterization of a neighborhood of $Z(y,z)$ in $\Gamma_z$ (it will be convenient to agree that $f^* g = f \circ g$).

The next two results express the geometry of $\Gamma_z$ in terms of geometric objects defined  on $\Gamma$.  In particular, in terms of   $\mathring  g$  the induced metric on $\Gamma$,  $\mathring  h$  the second fundamental form on $\Gamma$, which is defined by
\[
	\mathring  h (t_1, t_2)  : =  - \mathring g ( \nabla_{t_1} N  , t_2 \, ),
\] 
for all $t_1, t_2 \in T\Gamma$, and in terms of the square of the second fundamental form which is the tensor defined by
\[
	\mathring   h  \otimes  \mathring  h  (t_1, t_2) : =  \mathring g (  \nabla_{t_1}  N ,  
	\nabla_{t_2} N ),
\]
for all $t_1, t_2 \in T\Gamma$. In local coordinates, we have 
\[
	( \mathring h \otimes \mathring h)_{ij} =  \sum_{a,b} \mathring h_{ia} \, \mathring g^{a b} 
	\, \mathring h_{b j} .
\]
With these notations at hand, we have the~:
\begin{lemma}
The induced metric $g_z$ on $\Gamma_z$ is given by
\[
	g_z  = \mathring g - 2 \, z \, \mathring h + z^2 \, \mathring h \otimes \mathring h .
\] 
\label{le:2.1}
\end{lemma}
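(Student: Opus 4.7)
The plan is to prove the identity by a direct computation in local coordinates, after which the resulting equality is coordinate-invariant and hence tensorial. I would fix a local parameterization $X$ of a neighborhood of a point of $\Gamma$ and take $X_z := X + z\, X^*N$ as the induced parameterization of $\Gamma_z$. Differentiating gives $\partial_i X_z = \partial_i X + z\, \partial_i(N\circ X)$, and the induced metric is
\[
(g_z)_{ij} = g_e(\partial_i X_z, \partial_j X_z) = g_e(\partial_i X, \partial_j X) + z\bigl[g_e(\partial_i X, \partial_j N) + g_e(\partial_j X, \partial_i N)\bigr] + z^2\, g_e(\partial_i N, \partial_j N).
\]

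I would then identify the three blocks. The zeroth-order term is tautologically $\mathring g_{ij}$. For the linear term, since $N$ is a unit normal along $\Gamma$ the identity $g_e(\partial_j X, N\circ X) \equiv 0$ holds; differentiating and using the definition $\mathring h(t_1,t_2) = -\mathring g(\nabla_{t_1} N, t_2)$ together with the fact that the ambient Euclidean connection reduces to coordinate differentiation gives $g_e(\partial_i X, \partial_j N) = -\mathring h_{ij}$. By symmetry of $\mathring h$ the two cross terms coincide, contributing $-2z\,\mathring h_{ij}$. For the quadratic term, the defining formula $\mathring h\otimes\mathring h(t_1,t_2) := \mathring g(\nabla_{t_1} N,\nabla_{t_2} N)$ gives at once $g_e(\partial_i N, \partial_j N) = (\mathring h \otimes \mathring h)_{ij}$.

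Summing the three contributions yields the claimed coordinate identity, and hence the tensorial equality stated in the lemma. I do not expect any substantive obstacle: the computation is algebraic and the only point requiring care is to keep the sign and orientation conventions for $N$ and $\mathring h$ consistent with the definitions recalled just before the lemma. The parameter $z$ must also be small enough (depending on the base point) for $X_z$ to remain a local diffeomorphism, as already observed in the discussion of Fermi coordinates, but this does not affect the algebraic identity itself.
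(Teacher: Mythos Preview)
Your proposal is correct and follows essentially the same route as the paper: compute the coefficients of the induced metric on $\Gamma_z$ via the parameterization $X_z = X + z\,X^*N$, expand $g_e(\partial_i X_z,\partial_j X_z)$ as a quadratic polynomial in $z$, and identify the three blocks with $\mathring g$, $-2\mathring h$, and $\mathring h\otimes\mathring h$ using their definitions. The only difference is that you spell out the identification of the linear term via differentiation of $g_e(\partial_j X, N\circ X)\equiv 0$, which the paper leaves implicit.
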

\begin{proof}
We just need to compute the coefficients of the induced metric on $\Gamma_z$ in the parameterization given by $X_z$. We find
\[
	\begin{array}{rllll}
	\partial_{y_i} X_z \cdot \partial_{y_j} X_z =  \partial_{y_i} X \cdot \partial_{y_j} X
	+  z \, \left( \partial_{y_i} X \cdot \partial_{y_j} \tilde N   + \partial_{y_i} \tilde N 
	\cdot  \partial_{y_j} X \right) + z^2 \, \partial_{y_i} \tilde N \cdot \partial_{y_j} \tilde
	N  ,
	\end{array}
\]
where $\tilde N : = X^* N$. We can use the definition of $\mathring g$ and $\mathring h$, we conclude that
\[
	\begin{array}{rllll}
	\partial_{y_i} X_z \cdot \partial_{y_j} X_z  =  \mathring g_{ij}  
	-  2 \, z \, \mathring h_{ij}  +   z^2 \, (\mathring h \otimes \mathring h)_{ij} \, .
	\end{array}
\]
This completes the proof of the result. \end{proof}

Similarly,  the mean curvature $H_z$ of $\Gamma_z$ can be expressed in term of $z$, $\mathring g$ and $\mathring h$. We have the~:
\begin{lemma}
The following expansion holds
\[
	H_{z} = \sum_{j=0}^\infty {\rm Tr}_{\mathring g} ( \underbrace{ \mathring h \otimes \ldots \otimes 
	\mathring h }_{j \, {\rm times}}) \, z^j .
\]
\label{le:2.2}
\end{lemma}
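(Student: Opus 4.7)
The plan is to compute the shape operator of $\Gamma_z$ explicitly in terms of the shape operator of $\Gamma$, express everything through the induced metric from Lemma~\ref{le:2.1}, and then expand in a Neumann series.

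First, I would observe that because $Z(y,z) = y + z\,N(y)$ is a straight normal ray, the unit normal to $\Gamma_z$ at $Z(y,z)$ coincides with $N(y)$; in particular it is independent of $z$. Writing $A$ for the shape operator of $\Gamma$, with local components $A^{\ell}_{k} = \mathring g^{\ell m}\,\mathring h_{m k}$, the Weingarten identity $\partial_{y_k} N = -A^{\ell}_{k}\,\partial_{y_\ell} X$ gives $\partial_{y_i}X_z = (\delta^{\ell}_{i} - z\,A^{\ell}_{i})\,\partial_{y_\ell} X$. A direct computation of the second fundamental form of $\Gamma_z$ in the parameterization $X_z$ then yields
\[
(h_z)_{ij} \;=\; -\partial_{y_i} X_z\cdot \partial_{y_j} N \;=\; \mathring h_{ij} - z\,(\mathring h\otimes \mathring h)_{ij},
\]
which in matrix form reads $h_z = \mathring g\, A\,(I - z A)$. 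Combined with Lemma~\ref{le:2.1}, rewritten as $g_z = \mathring g\,(I - zA)^{2}$, the mean curvature becomes
\[
H_z \;=\; \mathrm{Tr}_{g_z}(h_z) \;=\; \mathrm{tr}\!\left(g_z^{-1}\, h_z\right) \;=\; \mathrm{tr}\!\left((I - zA)^{-2}\, A\,(I - zA)\right) \;=\; \mathrm{tr}\!\left(A\,(I - zA)^{-1}\right),
\]
where the last equality uses that $A$ and $I - zA$ commute.

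For $|z|$ small enough that the spectral radius of $zA$ is less than one (which holds uniformly on any compact portion of $\Gamma$, in particular pointwise for $z$ in a neighborhood of $0$), I would then expand the Neumann series $A\,(I - zA)^{-1} = \sum_{j\geq 0} z^{j}\, A^{j+1}$ and take traces termwise. The final identification is purely algebraic: $\mathrm{tr}(A^{k})$ is exactly the full $\mathring g$-contraction of the iterated tensor product $\mathring h \otimes \cdots \otimes \mathring h$ (with the convention of Section~\ref{se:3}, where $\mathring h \otimes \mathring h$ already denotes $\mathring g\,A^{2}$ written as a $(0,2)$-tensor), and this gives the stated expansion.

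I do not expect a serious obstacle here; the entire argument is an algebraic consequence of Lemma~\ref{le:2.1} plus the parallel-hypersurface identity for $h_z$. The only care needed is in bookkeeping the index convention for the iterated tensor product of $\mathring h$ used in the statement, and in noting that convergence of the series is automatic in a neighborhood of $z = 0$ of size comparable to $1/\|A\|_{\mathring g}$, which is precisely the regime in which the Fermi coordinates $Z$ themselves are a diffeomorphism.
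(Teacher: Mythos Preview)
Your proof is correct, but it follows a different route from the paper's. The paper does not compute the second fundamental form $h_z$ of the parallel hypersurface at all; instead it invokes the first-variation-of-area identity
\[
H_z \;=\; -\,\frac{1}{\sqrt{\det g_z}}\,\frac{d}{dz}\sqrt{\det g_z},
\]
works at a point where $\mathring g_{ij}=\delta_{ij}$ so that $g_z = (I-zA)^2$, and then applies Jacobi's formula for the derivative of a determinant to arrive directly at $H_z = \mathrm{tr}\bigl(A(I-zA)^{-1}\bigr)$, which is then expanded as a Neumann series.

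Your approach reaches the same key identity $H_z = \mathrm{tr}\bigl(A(I-zA)^{-1}\bigr)$ by computing $h_z = \mathring g\,A(I-zA)$ explicitly and contracting with $g_z^{-1}$ from Lemma~\ref{le:2.1}. This is slightly more hands-on and has the minor advantage of giving $h_z$ itself as a by-product (which the paper never states), while the paper's argument is shorter once one is willing to quote the variational characterization of the mean curvature. Either way, after the identity $H_z=\mathrm{tr}\bigl(A(I-zA)^{-1}\bigr)$ the two proofs coincide, including the identification $\mathrm{tr}(A^{j+1}) = \mathrm{Tr}_{\mathring g}\,\mathring h^{(j+1)}$ and the range of validity $|z|$ comparable to $1/\|A\|_{\mathring g}$.
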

It will be convenient to define 
\[
\mathring h^{(j)}  : = \underbrace{ \mathring h \otimes \ldots \otimes \mathring h }_{j \, {\rm times}} ,
\]
so that $\mathring h \otimes \mathring h =  \mathring h^{(2)}$.  Before we proceed with the proof, let us observe that we have the alternative formula
\[
 {\rm Tr}_{\mathring g} \,  \mathring h^{(j)}  = \sum_{i=1}^n \kappa_i^j ,
\]
where the $\kappa_j$ denote the principal curvature of $\Gamma$.

\begin{proof}
Recall that the mean curvature appears in the first variation of the volume form of parallel 
hypersurfaces. Hence, we have the formula 
\[
	H_z = - \frac{1}{\sqrt{\mbox{det} \,  g_z}} \, \frac{d}{dz} \sqrt{\mbox{det} \, g_z} .
\]
Now, we can always assume that, at the point where the computation is performed, $g_{ij} = \delta_{ij}$. In this case we can identify $\mathring h$ with a symmetric matrix $A$ and  $\mathring k$ with $A^2$. In particular, we  can write
\[
	H_z =  - \frac{1}{\mbox{det} \,  (I - z \, A)} \, \frac{d}{dz} \mbox{det} (I -  z \, A) ,
\] 
and the result follows from the formula 
\[
\frac{d}{dz} \mbox{det} (I - z A) = \mbox{Tr} \left( \sum_{j=0}^\infty z^j \, A^{j+1}\right)
\] 
where $A \in M_n (\mathbb R)$ and $z \in \mathbb R$ is small. 
\end{proof}

The following result is just Gauss's Lemma, it gives the expression  of the Euclidean metric on the domain of  $\mathbb R^{n+1}$ which is parameterized by $Z$. 
\begin{lemma}
We have 
\[
	Z^* \, g_{e} =  g_z + dz^2, 
\]
where $g_z$ is considered as a family of metrics on $T\Gamma$, smoothly depending on the parameter $z$ which belongs to a neighborhood of $0\in \mathbb R$.
\label{le:6-3}
\end{lemma}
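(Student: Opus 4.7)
The plan is to verify the claimed block-diagonal form of $Z^*g_e$ by a direct computation in any local parameterization $X$ of $\Gamma$. In such a parameterization the map $Z$ becomes $X_z + z \, \tilde N$ with $\tilde N = X^*N$, and the tangent vectors to be checked are the coordinate vectors $\partial_{y_i} X_z$ for $i=1,\ldots,n$, together with the ``radial'' vector $\partial_z Z = \tilde N$. Since $g_z$ is by definition the induced metric on the parallel hypersurface $\Gamma_z$ (Lemma~\ref{le:2.1}), the tangential block
\[
  \partial_{y_i} Z \cdot \partial_{y_j} Z = (g_z)_{ij}
\]
is already known, so the only content of the lemma is the identity $\partial_z Z \cdot \partial_z Z = 1$ together with the vanishing of the mixed terms $\partial_z Z \cdot \partial_{y_i} Z = 0$.

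The first identity is immediate since $\partial_z Z = \tilde N$ and $N$ is a unit vector field, so $|\tilde N|^2 = 1$. For the mixed terms, I would compute
\[
   \partial_{y_i} Z = \partial_{y_i} X + z\, \partial_{y_i} \tilde N,
\]
and then
\[
   \partial_z Z \cdot \partial_{y_i} Z = \tilde N \cdot \partial_{y_i} X + z\, \tilde N \cdot \partial_{y_i} \tilde N.
\]
The first summand vanishes because $\partial_{y_i} X$ is a tangent vector to $\Gamma$ while $\tilde N$ is the unit normal. The second summand vanishes because $\tilde N \cdot \partial_{y_i}\tilde N = \tfrac{1}{2}\,\partial_{y_i}|\tilde N|^2 = 0$, again using that $|\tilde N|^2 \equiv 1$. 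Combining these observations yields
\[
   Z^* g_e \;=\; g_z + dz^2,
\]
as claimed, and the smooth dependence on $z$ is obvious from the explicit formula for $g_z$ given in Lemma~\ref{le:2.1}.

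I do not anticipate any substantial obstacle: the result is precisely the classical Gauss lemma for the normal exponential map in the special case where the exponential reduces to the affine translation $y \mapsto y + zN(y)$ because the ambient space is Euclidean. The only point to be slightly careful about is to restrict to a neighborhood of $\{z=0\}$ on which $Z$ is a diffeomorphism, so that the pullback $Z^* g_e$ makes sense as a metric; this is ensured by the implicit function theorem argument recalled just after the definition~(\ref{eer}).
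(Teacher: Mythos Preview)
Your proof is correct; the paper itself does not give a proof but simply remarks that the result ``is just Gauss's Lemma,'' and your direct computation of the tangential, normal, and mixed components of $Z^*g_e$ is exactly the elementary verification behind that remark.
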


Recall that the Laplace-Beltrami operator is given by 
\[
	\Delta_g = \frac{1}{\sqrt{ |g| }} \, \partial_{x_i} \left( g^{ij} \, \sqrt{ | g |} \, \partial_{x_j}
	\, \right) .
\]
in local coordinates. Therefore, in the neighborhood of $\Gamma$ parameterized by $Z$, the Euclidean Laplacian in $ \mathbb R^{n+1}$ can be expressed in Fermi coordinates by the (well-known) formula 
\begin{equation}
	\Delta_{\mathbb R^{n+1}}  = \partial^2_z  - H_z \, \partial_z +  \Delta_{g_z} ,
\label{Lapx}
\end{equation}
which follows at once from Lemma~\ref{le:6-3} and the above formula for the Laplace-Beltrami operator.

\section{Construction of an approximate solution}
\label{se:7}

In this section, we use the Fermi coordinates which have been introduced in the previous section and rephrase the equation we would like to solve in some neighborhood of $\Gamma$. We also build an approximate solution to (\ref{ac}) whose nodal set is equal to $\Gamma$. 

We define 
\[
u_\varepsilon : =  u_1 ( \cdot / \varepsilon)
\]
and 
\[
\dot u_\varepsilon (z) : =  u_1'  ( z/ \varepsilon) \qquad \ddot u_\varepsilon (z) : =  u_1'' ( z/ \varepsilon), 
\]
where $u_1$ is the solution of (\ref{eq:fp1}). One should be careful that, with these notations since we have 
\[
\partial_z u_\varepsilon (z)  = \frac{1}{\varepsilon} \, \dot u_\varepsilon (z) 
 \qquad \mbox{and} \qquad \partial_z \dot u_\varepsilon (z)  = \frac{1}{\varepsilon} \, \ddot u_\varepsilon (z).
\]

Here we agree that $\Gamma$ is a smooth minimal hypersurface which is embedded in $\mathbb R^{n+1}$ and we use the notations introduced in the previous section for the Fermi coordinates about $\Gamma$. 
 
Given any (sufficiently small and sufficiently smooth) function $\zeta$ defined on $\Gamma$, we define $ \Gamma_\zeta$ to be the normal graph over $\Gamma$ for the function $\zeta$. Namely
\[
	\Gamma_\zeta := \{ y + \zeta(y) \, N(y) \in \mathbb R^{n+1}  \, : \, y \in \Gamma\} . 
\]
This notation should not be confused with $\Gamma_z$ which is the hypersurface parallel to $\Gamma$ at height $z$. We keep the notations of the previous section and, in a tubular neighborhood of $
\Gamma$ we define the function $u$ by
\[
	Z^* u (y, z) = \bar u \left( y, z - \zeta (y) \right) .
\]
It will be convenient to denote by $t$ the variable 
\[
	t : = z - \zeta(y) .
\]
Using the expression of the Laplacian in Fermi coordinates which has been derived in 
(\ref{Lapx}), we find with little work that the equation we would like to solve can be 
rewritten as
\begin{equation}
	\begin{array}{rllll}
	\varepsilon^2 \Big[( 1 +  \|Êd \zeta \|^2_{g_z}  ) \partial_t^2 \bar u +
	 \Delta_{g_z} \bar u  -  \left( H_\zeta  + \Delta_{g_z} \zeta    \right) 
	 \partial_t \bar u  \hspace{35mm}  \\[3mm]
	- 2 \, ( d \zeta , d \partial_t v)_{g_z}  \Big]_{|z = t + \zeta}  +  \bar u  - \bar u^3 =0 ,
	\end{array}
\label{eq:rfe}
\end{equation}
for  $t >0$ close to $0$ and $y \in \Gamma$. Some comments are due about the  notations. In this equation, and the equations below, all computations of the quantities between the square brackets $[ \quad ]$ are performed using the metric $g_z$ defined in Lemma~\ref{le:2.1} and considering that $z$ is a parameter. Once this is done, we set $z = t + \zeta (y)$. 

We define
\[
	\bar u  (y,t)  : = u_\varepsilon (t) + v ( y ,t ) ,
\]
in which case, the equation (\ref{eq:rfe}) becomes
\[
	\mathfrak N (v ,\zeta ) = 0 \, ,
\]
where we have defined 
\begin{equation}
	\begin{array}{rllll}
	\mathfrak N ( v ,\zeta)  & : =  \displaystyle \Big[ \left( \varepsilon^2 ( \partial_t^2 + 
	\Delta_{g_z} ) +1 - 3 u_\varepsilon^2 \right)  \, v  - \varepsilon \, ( \Delta_{g_z} \zeta +
	H_z )  ( \dot u_\varepsilon +  \varepsilon \, \partial_t v ) \\[3mm]
	& \quad  +   \| d \zeta \|^2_{g_z} \, (\dot u_\varepsilon + \varepsilon^2 \, \partial^2_t v)  - 2\,\varepsilon^2 \, ( d \zeta , d \, 	\partial_t v)_{g_z}  \Big]_{| z = t + \zeta} +   v^3 + 3 \, u_\varepsilon \, v^2 .
	\end{array}
\label{eq:nrfe}
\end{equation}

When $v \equiv 0$ and $\zeta \equiv 0$, we simply have 
\begin{equation}
	\mathfrak N (0,0) =  \displaystyle - \varepsilon \,  H_t  \,  \dot u_\varepsilon ,
\label{eq:7-18}
\end{equation}
Also recall that
\begin{equation}
\begin{array}{rllll}
	H_t  = {\rm Tr}_{\mathring g} \, \mathring h^{(2)} \, t  + {\rm Tr}_{\mathring g} \, \mathring h^{(3)} \, t^2 + \mathcal O ( t^3) .
\end{array}
\label{eq:7-19}
\end{equation}
Observe that we have implicitely used the fact that $\Gamma$ is a minimal hypersurface and hence $ {\rm Tr}_{\mathring g} \,  \mathring h =0$. 

We now further assume that $\Gamma$ is asymptotic to a minimal cone and we define, for all $y \in \Gamma$,
\begin{equation}
	d_\Gamma (y) := \sqrt{1 + \mbox{dist}_{\mathring g} (y_0, y)^2} ,
\label{eq:distgamma}
\end{equation}
where $y_0$ is a given point in $\Gamma$ and $ \mbox{dist}_{\mathring g}$ denotes the intrinsic distance on $\Gamma$. Away from a compact, $d_\Gamma$ is equivalent to the intrinsic distance from $y_0$ to $y$ on $\Gamma$.   

Since  we assume that $\Gamma$ is asymptotic to a cone, the principal curvatures of $\Gamma$ are bounded by a constant times $1/ d_\Gamma $ and this implies that we have the pointwise estimate 
\begin{equation}
  | \nabla^k {\rm Tr}_{\mathring g} \, \mathring h^{(j)}|_{\mathring g} \leq C_{j,k}  \,  (d_\Gamma)^{- j -k} ,
\label{eq:7-20}
\end{equation}
for all $k \geq 0$, where $C_{j,k} >0$. Using this information, we have the~:
\begin{lemma}
For all $k, k' \geq 0$, there exists a constant $C_{k,k'} >0$ such that 
\begin{equation}
	 |   \nabla^{k'}  \, \partial_t^{k} \, \mathfrak N (0 ,0 ) |_{\mathring g} \leq C_{k,k'} \, \varepsilon^{2-k} \, (d_\Gamma)^{-2-k'} ,
\label{eq:7.21} 
\end{equation}
in the neighborhood of $\Gamma$ which is parameterized by $Z$. 
\label{le:7.1}
\end{lemma}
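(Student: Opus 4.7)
The starting point is formula \eqref{eq:7-18}, which reduces the problem to obtaining pointwise derivative estimates on the product $\varepsilon\, H_t\, \dot u_\varepsilon(t)$. Because $\Gamma$ is minimal, the $z^0$ coefficient in Lemma~\ref{le:2.2} vanishes, so in the region $|t|\le d_\Gamma/2$ where the Fermi coordinates are non-degenerate (the tubular neighborhood has width comparable to $d_\Gamma$, by \eqref{eq:7-20} for $j=k=0$), the convergent series from \eqref{eq:7-19} together with \eqref{eq:7-20} yields
\[
|\nabla^{k'}\partial_t^{j} H_t|_{\mathring g} \;\le\; C_{j,k'} \,(d_\Gamma)^{-\max(j,1)-1-k'}\, |t|^{\max(1-j,0)}
\]
for every $j,k'\ge 0$. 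In other words, differentiation in $t$ costs at most one inverse power of $d_\Gamma$ per derivative, and the undifferentiated symbol $H_t$ gains an extra factor of $|t|$.

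The second ingredient is the rescaling of $u_1$: since $u_1(s)=\tanh(s/\sqrt 2)$, all derivatives $u_1^{(b)}$ decay exponentially at infinity, so for any nonnegative integers $a,b$ one has the universal bound
\[
\bigl|t^{a}\, u_1^{(b)}(t/\varepsilon)\bigr| \;\le\; C_{a,b}\,\varepsilon^{a}, \qquad t\in\mathbb R.
\]
Consequently $\partial_t^{m}\dot u_\varepsilon(t)=\varepsilon^{-m}u_1^{(m+1)}(t/\varepsilon)$ satisfies $|t^{a}\partial_t^{m}\dot u_\varepsilon|\le C\,\varepsilon^{a-m}$; in particular, the $|t|$ factor from $H_t$ will buy one additional power of $\varepsilon$.

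I then apply the Leibniz rule:
\[
\nabla^{k'}\partial_t^{k}\bigl(H_t\,\dot u_\varepsilon(t)\bigr) \;=\; \sum_{j=0}^{k}\binom{k}{j} \nabla^{k'}\partial_t^{j}H_t \cdot \partial_t^{k-j}\dot u_\varepsilon(t),
\]
and estimate each term by the two ingredients above. For $j\ge 1$ the bound on $\partial_t^{j}H_t$ contributes $(d_\Gamma)^{-j-1-k'}$ and the factor $\partial_t^{k-j}\dot u_\varepsilon$ contributes $\varepsilon^{j-k-1}\cdot$ (a bounded function of $t/\varepsilon$), giving a contribution dominated by $\varepsilon^{j-k-1}(d_\Gamma)^{-j-1-k'}\le \varepsilon^{1-k}(d_\Gamma)^{-2-k'}$ provided $\varepsilon\le d_\Gamma$ (which is harmless, as the statement is for small $\varepsilon$). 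For $j=0$ one uses the extra $|t|$ factor sitting inside $H_t$: the product $|t|\cdot u_1^{(k+1)}(t/\varepsilon)$ is bounded by $C\varepsilon$, yielding the same contribution $\varepsilon^{1-k}(d_\Gamma)^{-2-k'}$. Multiplying by the overall $\varepsilon$ in \eqref{eq:7-18} gives \eqref{eq:7.21}.

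The main (minor) obstacle is precisely the $j=0$ term in the Leibniz expansion: the naive sup-bound $|H_t|\le C\,|t|(d_\Gamma)^{-2}$ is by itself too weak to yield $\varepsilon^{2-k}$, and one must resist the temptation to bound $H_t$ and $\dot u_\varepsilon$ separately. The resolution is the elementary but essential observation $|t|\cdot |u_1^{(k+1)}(t/\varepsilon)|\le C\,\varepsilon$, which transfers the factor $|t|$ in the Taylor expansion of $H_t$ into an extra power of $\varepsilon$ through the concentration of $\dot u_\varepsilon$ near $t=0$. Outside the concentration region $|t|\gtrsim \varepsilon|\log\varepsilon|$ the exponential decay of $u_1^{(k+1)}$ kills every term, so the estimate extends to the entire Fermi neighborhood.
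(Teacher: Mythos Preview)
Your argument is correct and is precisely the computation the paper leaves implicit: combine \eqref{eq:7-18}, the expansion \eqref{eq:7-19} with its first term vanishing by minimality, the curvature decay \eqref{eq:7-20}, and the concentration of $\dot u_\varepsilon$ at scale $\varepsilon$ via the Leibniz rule. The key point---that the factor $|t|$ hidden in $H_t$ converts into an extra power of $\varepsilon$ when multiplied against $u_1^{(k+1)}(t/\varepsilon)$---is exactly right.

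There is one arithmetic slip to fix. You correctly state that $\partial_t^{m}\dot u_\varepsilon=\varepsilon^{-m}u_1^{(m+1)}(\cdot/\varepsilon)$, but two lines later you write that $\partial_t^{k-j}\dot u_\varepsilon$ contributes $\varepsilon^{j-k-1}$ times a bounded function; it should be $\varepsilon^{j-k}$ (recall that $\dot u_\varepsilon=u_1'(\cdot/\varepsilon)$, \emph{not} $\partial_t u_\varepsilon$). As written, your displayed inequality $\varepsilon^{j-k-1}(d_\Gamma)^{-j-1-k'}\le \varepsilon^{1-k}(d_\Gamma)^{-2-k'}$ actually fails for $j=1$. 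With the correct exponent the $j\ge 1$ term gives $\varepsilon^{j-k}(d_\Gamma)^{-j-1-k'}=\varepsilon^{1-k}(d_\Gamma)^{-2-k'}\cdot(\varepsilon/d_\Gamma)^{j-1}\le \varepsilon^{1-k}(d_\Gamma)^{-2-k'}$, and the overall factor $\varepsilon$ from \eqref{eq:7-18} then yields \eqref{eq:7.21}. After this correction the proof is complete.
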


Given a function $f$ which is defined in $\Gamma \times \mathbb R$, we define $\Pi$ to be the $L^2$-orthogonal projection on $\dot u_\varepsilon$, namely
\[
\Pi (f)  : =  \frac{1}{\varepsilon  \, c} \, \int_{\mathbb R}  f(y,t) \, \dot u_\varepsilon  (t) \, dt ,
\]
where the normalization constant 
\[
c : =\frac{1}{\varepsilon} \,  \int_{\mathbb R}Ê\dot u_\varepsilon^2 (t) \, dt =  \int_{\mathbb R}Ê(u_1')^2 (t) \, dt .
\]
Of importance for us, will be the $L^2$-projection of $\mathfrak N (0,0)$ over $\dot u_\varepsilon$. The crucial observation is that 
\begin{equation}
\begin{array}{rllll}
\displaystyle \int_{\mathbb R} H_t \, \dot u_\varepsilon^2 \, dt  & =  &  \displaystyle \sum_{j=1}^\infty  \left( \int_{\mathbb R} t^j \, \dot u_\varepsilon^2 \, dt \right) \, {\rm Tr}_{\mathring g} \, \mathring h^{(j+1)}  \\[3mm]
& = & \displaystyle \sum_{k=1}^\infty  \varepsilon^{2k+1} \,  \left( \int_{\mathbb R} t^{2k} \, (u_1' )^2 \, dt  \right) \, {\rm Tr}_{\mathring g} \, \mathring h^{(2k+1)}  ,
\end{array}
\label{eq:parity}
\end{equation}
because of parity.  Using this property, we conclude that~:
\begin{lemma}
For all $k  \geq 0$, there exists a constant $C_{k } >0$ such that 
\begin{equation}
	 |  \nabla^{k}  \, \Pi  \, (\chi\, \mathfrak N (0 ,0 )) |_{ \mathring g} \leq C_{k} \, \varepsilon^{3} \,  (d_\Gamma)^{- 3 -k} \, ,
\label{eq:7.22} 
\end{equation}
in $\Gamma$. Here $\chi$ is a cutoff function which is identically equal to $1$ when $|t|Ê\leq c \, d_\Gamma (y)$ for some $c >0$ fixed small enough. 
\label{le:7.2}
\end{lemma}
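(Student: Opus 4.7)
The plan is to exploit the same parity cancellation that drives identity~(\ref{eq:parity}). Substituting (\ref{eq:7-18}) into the definition of $\Pi$ gives
\[
  \Pi(\chi\, \mathfrak N(0,0))(y) \;=\; -\frac{1}{c} \int_{\mathbb R} \chi(y,t)\, H_t(y)\, \dot u_\varepsilon(t)^2\, dt.
\]
It is natural, and I assume throughout, that the cutoff has the product form $\chi(y,t) = \chi_0(t/d_\Gamma(y))$ with $\chi_0$ even in its argument.

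First I would Taylor expand $H_t(y)$ in $t$ using Lemma~\ref{le:2.2} together with (\ref{eq:7-19}) and the curvature decay (\ref{eq:7-20}). Since $\Gamma$ is minimal the $t^0$ coefficient vanishes, and on the support of $\chi$, where $|t|\leq c\, d_\Gamma(y)$, this yields
\[
  H_t(y) \;=\; t\,{\rm Tr}_{\mathring g}\mathring h^{(2)}(y) \;+\; t^2\,{\rm Tr}_{\mathring g}\mathring h^{(3)}(y) \;+\; R(y,t),
\]
with a remainder satisfying $|\nabla^{k'} R(y,t)|\leq C_{k'}\, |t|^3\, d_\Gamma(y)^{-4-k'}$, obtained by summing the tail of the series and using (\ref{eq:7-20}). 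The decisive observation is that the $t$-linear piece integrates to zero against the even weight $\chi(y,\cdot)\, \dot u_\varepsilon^2$, exactly as the odd-order terms cancel in (\ref{eq:parity}); this is precisely the mechanism that upgrades the naive estimate (\ref{eq:7.21}) by one extra power of $\varepsilon$. The quadratic term supplies the leading contribution: the rescaling $t = \varepsilon s$ gives $\int \chi\, t^2\, \dot u_\varepsilon(t)^2\, dt = O(\varepsilon^3)$, and combined with $|{\rm Tr}_{\mathring g}\mathring h^{(3)}(y)|\leq C\, d_\Gamma(y)^{-3}$ this yields the claimed bound $\varepsilon^3\, d_\Gamma^{-3}$. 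The cubic remainder contributes at worst $O(\varepsilon^4\, d_\Gamma^{-4})$, absorbed into the main term in the regime $\varepsilon \leq c\, d_\Gamma$ in which the Fermi neighborhood and the approximate solution live.

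For the $\nabla^k$ estimate I would differentiate under the integral. Each derivative in $y$ landing on ${\rm Tr}_{\mathring g}\mathring h^{(j)}$ or on $R$ costs exactly one factor of $d_\Gamma^{-1}$ by (\ref{eq:7-20}), producing the advertised $d_\Gamma^{-3-k}$ decay. Derivatives landing on $\chi$ are supported where $|t|\geq c\, d_\Gamma(y)$, and there $\dot u_\varepsilon(t)^2 = (u_1'(t/\varepsilon))^2 \leq C\, e^{-2\sqrt{2}\, c\, d_\Gamma/\varepsilon}$ is exponentially small in $d_\Gamma/\varepsilon$, hence negligible compared to any polynomial bound.

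The delicate point, and in my view the only real obstacle, is preserving the parity cancellation under $y$-differentiation: because $\chi$ depends on $y$ through $d_\Gamma(y)$, one must check that $\partial_y^\alpha \chi$ remains even in $t$ for every multi-index $\alpha$ and every $y$, which is exactly what the product form $\chi_0(t/d_\Gamma(y))$ with $\chi_0$ even guarantees. Without this structural choice the $t$-linear piece of $H_t$ would leak back in under differentiation and spoil the $\varepsilon^3$ factor; with it, the cancellation is stable at every order in $k$.
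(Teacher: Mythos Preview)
Your argument is correct and follows exactly the route the paper takes: the lemma is stated immediately after the parity identity~(\ref{eq:parity}), and the paper's entire proof is the phrase ``Using this property, we conclude that,'' so you have simply supplied the details the paper omits. One small remark: the evenness of $\chi$ that you single out as the ``only real obstacle'' is convenient but not essential, since on the region where $\chi\neq 1$ (and hence also where any $y$-derivative of $\chi$ is supported) the factor $\dot u_\varepsilon^2$ is already exponentially small in $d_\Gamma/\varepsilon$, as you yourself observe---so the parity cancellation for the $t$-linear term survives regardless, and your concern is handled by your own earlier estimate.
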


\begin{remark}
Notice that, if we use the fact that $\Gamma$ is asymptotic to a minimal cone, then the principal curvatures of $\Gamma$ are bounded by a constant times $1/d_\Gamma$ and this implies that there exists $c >0$ such that the domain where $Z$ is a diffeomorphism contains the set of points for which $|t| \leq c \,  d_\Gamma$.
\label{re:difeo}
\end{remark}

The function $\bar u_\varepsilon$, which is defined by
\begin{equation}
Z^* \bar u_\varepsilon (y,t) : =  u_\varepsilon (t) ,
\label{eqves}
\end{equation}
in a neighborhood of $\Gamma$, will be used to define an approximate solution to our problem. 

\section{Analysis of the model linear operator}

In this section, we analyze the operator 
\begin{equation}
L_\varepsilon  : =  \varepsilon^2 \, \left( \partial_t^2 + \Delta_{\mathring g}  \right) + 1 - 3 \, u_\varepsilon^2 ,
\label{eq:12-1}
\end{equation}
acting on functions defined on the product space $\Gamma \times \mathbb R $, which is naturally endowed with the product metric 
\[
\mathring g + dt^2.
\] 

First, we will recall some standard injectivity result which is the key result in this analysis. Then, we will use this results to obtain an {\em a priori} estimate for solutions of $L_\varepsilon \, w = f$, when the functions $w$ and $f$ are defined in appropriate weighted spaces and satisfy some orthogonality condition. The proof of the {\em a priori} estimate is by contradiction. Finally, application of standard results in functional analysis will provide the existence of a right inverse for the operator $L_\varepsilon$ acting on some infinite codimension function space. 

\subsection{The injectivity result}

We collect some basic information about the spectrum of the operator
\begin{equation}
L_0 : = -\left( \partial_t^2 +  1 - 3\, u_1^2 \right) ,
\label{eq:12-2}
\end{equation}
which arises as the linearized operator of (\ref{eq:fp1}) about $u_1$ and which is acting on functions defined in $\mathbb R$. All the informations we need are included in the~:
\begin{lemma}
The spectrum of the operator $L_0$ is the union of the eigenvalue $\mu_0 =0$, which is associated to the eigenfunction
\[
w_0 (t) : = \frac{1}{\cosh^2(\frac{t}{\sqrt{2}})} ,
\]
the eigenvalue $\mu_1= \frac{3}{2}$, which is  associated to the eigenfunction
\[
w_1 (t)  :=  \frac{\sinh (\frac{t}{\sqrt{2}})}{\cosh^2 (\frac{t}{\sqrt{2}})} ,
\]
and the continuous spectrum which is given by $[2, \infty)$.
\label{le:12-1}
\end{lemma}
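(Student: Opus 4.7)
Rewriting the potential as $3u_1^2-1 = 2 - 3\operatorname{sech}^2(t/\sqrt{2})$, one sees that
\[
L_0 = -\partial_t^2 + 2 - 3\operatorname{sech}^2(t/\sqrt{2}),
\]
a Pöschl--Teller Schrödinger operator with $V(t)\to 2$ exponentially fast as $|t|\to\infty$. My strategy has three parts: (i) verify the two claimed eigenpairs by direct computation, (ii) identify the essential spectrum via Weyl's theorem, and (iii) rule out any further discrete spectrum in $(-\infty,2)$ via a supersymmetric factorization.

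For (i), differentiating the ODE $u_1''+u_1-u_1^3=0$ gives $L_0 u_1'=0$, and since $u_1'(t)=\tfrac{1}{\sqrt{2}}\operatorname{sech}^2(t/\sqrt{2})$ is a positive multiple of $w_0$, this already yields $L_0 w_0=0$. A direct computation, differentiating $w_1(t)=\sinh(t/\sqrt{2})\operatorname{sech}^2(t/\sqrt{2})$ twice and using $\operatorname{sech}'=-\operatorname{sech}\tanh$, yields $L_0 w_1=\tfrac{3}{2}w_1$. For (ii), the exponential convergence $V\to 2$ places $L_0$ squarely within Weyl's theorem: $\sigma_{\mathrm{ess}}(L_0)=[2,\infty)$, so anything else must be isolated eigenvalues of finite multiplicity below $2$.

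For (iii), I would first use one-dimensional Sturm--Liouville theory: since $w_0>0$, it is the (simple) ground state, so $\mu_0=0$ is the lowest eigenvalue; since $w_1$ has exactly one zero (at $t=0$), $\mu_1=\tfrac{3}{2}$ is the simple second eigenvalue. The crucial step is to exclude any third eigenvalue in $[\tfrac{3}{2},2)$. I would do this through the factorization
\[
L_0 = A^*A,\qquad A:=\partial_t+\sqrt{2}\,\tanh(t/\sqrt{2}),
\]
which is checked by computing $W^2-W'=2\tanh^2(t/\sqrt{2})-\operatorname{sech}^2(t/\sqrt{2})=3\tanh^2(t/\sqrt{2})-1=V$. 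The supersymmetric partner operator
\[
AA^* = -\partial_t^2+2-\operatorname{sech}^2(t/\sqrt{2})
\]
is again Pöschl--Teller, now with parameter $\lambda=1$ and a unique bound state, the positive function $\operatorname{sech}(t/\sqrt{2})$ at eigenvalue $\tfrac{3}{2}$ (verified directly). Since $A^*A$ and $AA^*$ share their nonzero spectrum with equal multiplicity, $L_0$ admits no positive eigenvalue other than $\tfrac{3}{2}$; and $\ker(A^*A)=\ker A$ is spanned by $\exp\bigl(-\sqrt{2}\int\tanh(t/\sqrt{2})\,dt\bigr)=\operatorname{sech}^2(t/\sqrt{2})$, recovering $w_0$.

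The main obstacle is precisely the last step: the two explicit eigenvalues could \emph{a priori} be accompanied by hidden eigenvalues in the gap $(\tfrac{3}{2},2)$. The supersymmetric factorization disposes of this cleanly because the Pöschl--Teller hierarchy is exactly solvable; alternatively one could run an oscillation count on solutions of $L_0 w=\mu w$ for $\mu\in(\tfrac{3}{2},2)$, but the explicit algebraic approach is more transparent and also pins down the eigenfunctions.
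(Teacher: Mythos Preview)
Your argument is correct and, for steps (i) and (ii), essentially matches the paper: the paper also obtains $\mu_0=0$ from translation invariance (noting that $u_1'>0$ forces it to be the ground state), checks $\mu_1=\tfrac{3}{2}$ by hand, and calls the identification of the continuous spectrum $[2,\infty)$ ``standard''.

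The genuine difference is in step (iii). The paper does not argue at all; it simply cites Nikiforov--Uvarov for the fact that $0$ and $\tfrac{3}{2}$ are the \emph{only} eigenvalues of $L_0$. Your supersymmetric factorization $L_0=A^*A$ with $A=\partial_t+\sqrt{2}\tanh(t/\sqrt{2})$ is a clean, self-contained substitute: it reduces the question to the partner operator $AA^*=-\partial_t^2+2-\operatorname{sech}^2(t/\sqrt{2})$, which is the $\ell=1$ P\"oschl--Teller model with a single bound state. If one wishes to avoid even that citation, the hierarchy can be iterated once more (factorizing $AA^*-\tfrac{3}{2}$) to reach the free operator $-\partial_t^2+2$, which manifestly has empty discrete spectrum; this makes the exclusion of hidden eigenvalues in $(\tfrac{3}{2},2)$ fully elementary. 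What your approach buys is transparency and independence from special-function references; what the paper's citation buys is brevity.
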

\begin{proof}
The fact that the continuous spectrum is equal to $[2, \infty)$ is standard. The fact that the bottom eigenvalue is $0$ follows directly from the fact that the equation for $u_1$ is autonomous and hence the function $u'_1 = \partial_t u_1$, which decays exponentially fast at infinity,  is in the $L^2$-kernel of $L_0$.  Since this function is positive, it has to be the eigenfunction associated to the lowest eigenvalue of $L_0$. Direct computation shows that $\mu_1$ is an eigenvalue of $L_0$ and, finally, it is proven in \cite{Nik-Uva} that $\mu_0 =0$ and $\mu_1 = 3/2$ are the only eigenvalues of $L_0$.
\end{proof}

Observe that this result implies that the quadratic form associated to $L_0$ is definite positive when acting on functions which are $L^2$-orthogonal to $u_1'$. More precisely, we have the inequality
\begin{equation}
\int_{\mathbb R} \left( |\partial_t w|^2 - w^2 + 3 \, u_1^2 \, w^2\right) \, dt \geq \frac{3}{2} \, \int_{\mathbb R} w^2 \, dt ,
\label{eq:12-4}
\end{equation}
for all function $w \in H^1(\mathbb R)$ satisfying the orthogonality condition 
\begin{equation}
\int_{\mathbb R} w(t) \, u_1'(t) \, dt =0 .
\label{eq:12-5}
\end{equation}

As already mentioned, the discussion to follow is based on the understanding of the bounded kernel of the operator 
\begin{equation}
L_* : = \partial_t^2  + \Delta_{\mathbb R^n} + 1- 2 u_1^2 ,
\label{eq:12-6}
\end{equation}
which is acting on functions defined on the product space $\mathbb R \times \mathbb R^n$. This is the contain of the following~:
\begin{lemma}
Assume that $w \in L^\infty (\mathbb R \times \mathbb R^n )$ satisfies $L_* \, w =0$. Then $w$ only depends on $t$ and is collinear to $u_1'$.
\label{le:12.2}
\end{lemma}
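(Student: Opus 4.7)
The plan is to reduce the lemma to two classical Liouville results: one for bounded harmonic functions on $\mathbb R^n$, and one for bounded non-negative subsolutions of the shifted Laplacian $-\Delta+3$ on $\mathbb R^n$. The two key inputs are the spectral information from Lemma~\ref{le:12-1} (one-dimensional $L^2$-kernel of $L_0$ spanned by $u_1'$, and the resulting coercivity (\ref{eq:12-4})) together with the exponential decay of $u_1'$ in $t$.

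Elliptic regularity for $L_*$ first gives that $w$ is smooth with bounded derivatives of all orders. Since $u_1'$ decays exponentially on $\mathbb R$, the function
$$\alpha(x):=\frac{1}{c}\int_{\mathbb R} w(t,x)\, u_1'(t)\, dt,\qquad c:=\int_{\mathbb R} (u_1')^2\, dt,$$
is well defined, bounded and smooth on $\mathbb R^n$. Differentiating under the integral sign, integrating by parts twice in $t$ (boundary terms vanish by the exponential decay of $u_1'$ and $u_1''$), and using $L_0 u_1' = 0$, one obtains
$$c\,\Delta_x\alpha(x) = -\int_{\mathbb R} w(t,x)\,[\partial_t^2 u_1' + (1-3 u_1^2)u_1']\, dt = 0.$$
Hence $\alpha$ is bounded harmonic on $\mathbb R^n$, and the classical Liouville theorem gives $\alpha\equiv\alpha_0$. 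Setting $\tilde w:=w-\alpha_0 u_1'$ reduces the problem to showing that every bounded smooth solution of $L_*\tilde w=0$ satisfying $\int_{\mathbb R}\tilde w(t,x)\, u_1'(t)\, dt=0$ for every $x\in\mathbb R^n$ vanishes identically.

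For this reduced statement I would first establish the uniform-in-$x$ exponential decay $|\tilde w(t,x)|\leq C e^{-\alpha|t|}$ for some $\alpha\in(0,\sqrt{2})$. This follows from the fact that $1-3u_1^2\to-2$ as $|t|\to\infty$, so $L_*$ is asymptotically the coercive operator $\partial_t^2+\Delta_x-2$; a standard barrier/comparison argument against $Me^{-\alpha(|t|-T_0)}$ on the region $\{|t|>T_0\}$ delivers the stated bound. Given this decay, the function $F(x):=\int_{\mathbb R}\tilde w(t,x)^2\, dt$ is bounded, smooth and non-negative on $\mathbb R^n$. A direct computation using integration by parts in $t$ and the equation $L_*\tilde w=0$ yields
$$\Delta_x F(x) = 2\int_{\mathbb R}|\nabla_x\tilde w|^2\, dt + 2\int_{\mathbb R}\bigl[(\partial_t\tilde w)^2-\tilde w^2 + 3 u_1^2\tilde w^2\bigr]\, dt,$$
and the coercivity inequality (\ref{eq:12-4}), applicable pointwise in $x$ since $\tilde w(\cdot,x)\in H^1(\mathbb R)$ and $\tilde w(\cdot,x)\perp u_1'$, gives the scalar differential inequality $\Delta_x F\geq 3F$ on $\mathbb R^n$.

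The lemma then follows by an iterated Caccioppoli estimate on $F$. Multiplying $\Delta_x F\geq 3F$ by a cutoff $\eta_R(x)^2$ with $\eta_R\equiv 1$ on $B_R$, $\mathrm{supp}\,\eta_R\subset B_{2R}$, and $|\nabla\eta_R|^2+|\eta_R\Delta\eta_R|\leq C/R^2$, then integrating by parts twice on $\mathbb R^n$ and using $F\geq 0$, one obtains $\int_{B_R} F \leq (C/R^2)\int_{B_{2R}} F$. Iterating $k$ times against the trivial bound $\int_{B_{2^k R}} F \leq \|F\|_\infty|B_{2^k R}|$ gives $\int_{B_R} F\leq C_k R^{n-2k}$ for every $k\geq 0$, with $C_k\to 0$ super-exponentially; letting $k\to\infty$ forces $\int_{B_R} F=0$ for every $R$, so $F\equiv 0$ and consequently $\tilde w\equiv 0$. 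The only non-routine step is the uniform exponential decay of $\tilde w$ in $t$, which I expect to be the main obstacle; all the other manipulations are one-line consequences of the spectral gap of $L_0$ and the classical maximum principle.
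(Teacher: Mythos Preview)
Your argument follows the paper's proof almost step for step: the projection onto $u_1'$ giving a bounded harmonic coefficient, the exponential decay in $t$ of the orthogonal part, and the derivation of the subharmonic inequality $\Delta_x F \ge 3F$ for $F(x)=\int \tilde w(\cdot,x)^2\,dt$. The only genuine divergence is in the final Liouville step: the paper tests the inequality against the first Dirichlet eigenfunction $E_R$ of $-\Delta$ on $B_R$ (so that $\int_{\partial B_R}(\partial_r E_R)F\le 0$ and $\lambda_1/R^2-3<0$ force $F\equiv 0$), whereas you run an iterated Caccioppoli argument. Both are standard ways of killing a bounded nonnegative solution of $\Delta F\ge cF$, and yours is arguably the more elementary.

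One point to tighten: your barrier $Me^{-\alpha(|t|-T_0)}$ alone does not justify the comparison on the unbounded slab $\{|t|>T_0\}\times\mathbb R^n$, since you have no control of $\tilde w$ as $|x|\to\infty$. The paper handles this by taking the supersolution
\[
W(y,t)=e^{-\sigma|t|}+\eta\,\cosh(\delta t)\sum_{i=1}^n\cosh(\delta y_i),
\]
whose second term blows up at spatial infinity, so the maximum principle applies on the whole slab; one then lets $\eta\to 0$. You correctly flagged this step as the main obstacle, and this is precisely the missing ingredient in your sketch.
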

\begin{proof}
The original proof of this Lemma, which is based on Fourier transform in $\mathbb R^n$, can be found in  \cite{Pac-Rit}. For the sake of completeness, we give here the proof which is more in the spirit of the proof in \cite{dkwdg}. First, we observe that, by elliptic regularity theory, the function $w$ is smooth and we can decompose
\[
w(y,t) =  c(y) \, u_1'(t) + \bar w (t,y) ,
\]
where $\bar w (y, \cdot )$ satisfies (\ref{eq:12-5}) for all $y \in \mathbb R^n$. Inserting this decomposition into the equation satisfied by $w$, we find 
\[
u_1'  \, \Delta_{\mathbb R^n} \, c + \left( \partial_t^2  + 1- 2 u_1^2 \right)  \, \bar w + \Delta_{\mathbb R^n} \, \bar w = 0 .
\]
Multiplying this equation by $u_1'$ and integrating the result over $t \in \mathbb R$, we conclude easily that
\[
\Delta_{\mathbb R^n} c = 0,
\]
since $L_0 \, u'_1 =0$ and since $\Delta_{\mathbb R^n} \, \bar w$ is $L^2$-orthogonal to the function $u'_1$. By assumption $w$ is a bounded function and hence so is the function $c$. In particular, this implies that $c$ is the constant function. 

Next, we prove that $\bar w \equiv 0$. Since we have proven that $c$ is the constant function, we can now write 
\begin{equation}
\left( \partial_t^2  + 1- 2 u_1^2 \right)  \, \bar w + \Delta_{\mathbb R^n} \, \bar w =0
\label{eq:12-7}
\end{equation}

We claim that,  for any $\sigma \in (0, \sqrt 2)$, the function $\bar w$ is bounded by a constant times $(\cosh s)^{-\sigma}$. Indeed, in the equation (\ref{eq:12-7}), the potential, which is given by $1- 3u_1^2$, tends to $-2$ as $|t|$ tends to $\infty$. Using this property, one can check that, for all $\eta >0$ and $\delta \in (0, 1)$,  the function 
\[
W (y,t) : = e^{-\sigma |t|} + \eta  \,  \cosh (\delta t) \, \sum_{i=1}^n  \cosh (\delta y_i) , 
\] 
satisfies $L_* W  < 0$ in the region where  $|t| \geq t_*$, provided $t_* >0$ is fixed large enough (depending on $\sigma$).  Since $\bar w$ is bounded, we conclude that  
\[
|\bar w| \leq \|\bar w\|_{L^\infty} \, e^{\sigma t_*} \, \left( e^{-\sigma |t|} + \eta \,  \cosh (\delta t) \, \, \sum_{i=1}^n  \cosh (\delta y_i)\right),
\] 
when  $|t| \geq t_*$. Letting $\eta$ tend to $0$, this implies that 
\[
|\bar w| \leq  \|\bar w\|_{L^\infty}  \, e^{-\sigma (|t|-t_*)} ,
\] 
for $|t| \geq t_*$ and this completes the proof of the claim.  

Multiplying the equation satisfied by $\bar w$ by $\bar w$ intself and integrating the result over $\mathbb R$ (and not over $\mathbb R^n$), we find that 
\[
\int_{\mathbb R}  \left( | \partial_t \bar w|^2 - \bar w^2 + 3\, u_1^2 \, \bar w^2\right) \, dt  + \int_{\mathbb R} \bar w \, \Delta_{\mathbb R^n} \bar w \, dt  = 0 .
\]
Using the identity
\[
2 \, \bar w \, \Delta_{\mathbb R^n} \bar w = \Delta_{\mathbb R^n} \bar w^2  - 2 \, | \nabla w |^2, 
\]
together with Lemma~\ref{le:12-1}, we conclude that the function
\[
V  (y) : = \int_{\mathbb R} \bar w^2  (y,t) \, dt , 
\]
satisfies
\[
\Delta_{\mathbb R^n} V - \frac{3}{4} \, V  =  \int_{\mathbb R}  |\nabla \bar w|^2 \, dt \geq 0 .
\]

Let $E_1$ be the first eigenvalue of $-\Delta_{\mathbb R^n}$ in the ball of radius $1$, with $0$ Dirichlet boundary condition. The associated eigenvalue will be denoted by $\lambda_1$. In particular 
\begin{equation} 
- \Delta_{\mathbb R^n} E_1 =  \lambda_1 \, E_1.
\label{eq:12.7}
\end{equation}
Then $E_R(x) := E_1 (x/R)$ is the first eigenfunction of  $-\Delta_{\mathbb R^n}$ in the ball of radius $R$, with $0$ Dirichlet boundary condition, and the associated eigenvalue is given by $\lambda_1 / R^{2}$.

We multiply (\ref{eq:12-7}) by $E_R$ and integrate by parts the result over $B_R$, the ball of radius $R$ in $\mathbb R^n$. We get
\[
\left( \frac{\lambda_1}{R^{2}} -\frac{3}{4}Ê\right) \, \int_{B_R}  V \, E_R \, dx  + \int_{\partial 
B_R} \partial_r E_R \, V \, da  \geq  0 .
\]
Choosing $R$ large enough and using the fact that $V \geq 0$, we conclude that  $V \equiv 0$ in $B_R$. Therefore $V \equiv 0$ on $\mathbb R^n$.
\end{proof}

\subsection{The {\em a priori} estimate}

We are now in a position  to analyze the operator $L_\varepsilon$ which has been defined in (\ref{eq:12-1}), when it  is acting on H\"older weighted spaces which we now define.  We need to introduce some notations. Recall that $d_\Gamma$ denotes the function
\[
d_\Gamma (y) := \sqrt {1 + \mbox{\rm dist}_{\mathring g} (y, y_0)^2} ,
\]
where  $y_0 \in \Gamma$ is a given point in $\Gamma$. In other words, away from a compact, $d_\Gamma$ is equivalent to the intrinsic distance function to $y_0$. 

Next, we define on $\Gamma \times \mathbb R$, the scaled metric 
\[
g_\varepsilon := \varepsilon^2 \, (\mathring g + dt^2) .
\]
Given a point $x = (y, t) \in \Gamma \times \mathbb R $, we define $\|Êw\|_{\mathcal C^{k, \alpha}_{g_\varepsilon} ( B_\varepsilon (x , \varepsilon )) }$ to be the $\mathcal C^{k, \alpha}$ norm of the function $w$ in $ B_\varepsilon (x , \varepsilon )$, the geodesic ball of radius $\varepsilon$ centered at the point $x$, when the underlying manifold $\Gamma \times \mathbb R$ is endowed with the scaled metric $g_\varepsilon$.  With these notations in mind, we can state the~:
\begin{definition}
For all $k \in \mathbb N$, $\alpha \in (0,1)$ and $\nu \in \mathbb R$, the space $ \mathcal C^{k, \alpha}_{\varepsilon, \nu} (\Gamma \times \mathbb R )$ is the space of functions $ w\in \mathcal C^{k, \alpha}_{\rm loc} (\Gamma \times \mathbb R  )$ for which the following norm
\[
	\| w \|_{ \mathcal C^{k, \alpha}_{\varepsilon,  \nu } (\Gamma \times \mathbb R  )} : 
	=  \sup_{ x =  (y,t) \in \Gamma \times \mathbb R }  \, d_\Gamma(y)^{- \nu} \, \|Êw\|_{\mathcal C^{k, \alpha}_{g_\varepsilon} ( B_\varepsilon ( x, \varepsilon )) },
\]
if finite. 
\end{definition}
In other words, if $w \in  \mathcal C^{k, \alpha}_{\varepsilon, \nu} (\Gamma \times \mathbb R  )$, then 
\[
	d_\Gamma (y)^{-\nu} \,  | w( y,t) | \leq \| w\|_{ \mathcal C^{k, \alpha}_{\varepsilon, \nu} 
	(\Gamma \times \mathbb R )}  ,
\]
with similar estimates for the partial derivatives of $w$ when $\mathbb R \times \Gamma$ is endowed with the scaled metric $g_\varepsilon$. In particular,  
\[
	 | \nabla^a \, \partial_t^b  w(y,t) |_{\mathring g} 
	\leq C \, \| w\|_{ \mathcal C^{k, \alpha}_{\varepsilon, \nu } ( \Gamma \times 
	\mathbb R  )} \,  \varepsilon^{-a-b}  \, d_\Gamma (y)^{\nu} \,  , 
\]
provided  $a+b \geq k$. In other words, taking partial derivatives, we loose powers of $\varepsilon$ while the asymptotic behavior of the functions and its partial derivatives remains the same as $d_\Gamma (y)$ tend to $\infty$.

We shall work in the closed subspace of functions satisfying the orthogonality condition
\begin{equation}
	\int_{\mathbb R} \, w (y,t)  \, u'_1 (t)  \, dt = 0 \quad \mbox{for all} \quad y \in \Gamma .
\label{eq:12-10}
\end{equation}
We have the following~:
\begin{proposition}
Assume that  $\nu \in \mathbb R$ is fixed. Then, there exist  constants $C >0$ and $\varepsilon_0 >0$ such that, for all $\varepsilon \in (0, \varepsilon_0)$ and for all $w \in \mathcal C^{2, \alpha}_{\varepsilon, \nu } (\Gamma \times \mathbb R  )$ satisfying (\ref{eq:12-10}), we have 
\begin{equation}
	\| w \|_{\mathcal C^{2, \alpha}_{\varepsilon, \nu } (\Gamma \times \mathbb R  )} \leq C \,  \| L_\varepsilon \, 
	w\|_{\mathcal C^{0, \alpha}_{\varepsilon, \nu} (\Gamma \times \mathbb R  )} .
\label{eq:12-100}
\end{equation}
\label{pr:apriori}
\end{proposition}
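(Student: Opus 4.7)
I argue by contradiction via a concentration/blow-up analysis. Assume the estimate fails: then there exist sequences $\varepsilon_i \to 0$ and $w_i \in \mathcal{C}^{2,\alpha}_{\varepsilon_i,\nu}(\Gamma \times \mathbb{R})$ satisfying (\ref{eq:12-10}), normalized so that
\begin{equation*}
\|w_i\|_{\mathcal{C}^{2,\alpha}_{\varepsilon_i, \nu}} = 1, \qquad \|L_{\varepsilon_i} w_i\|_{\mathcal{C}^{0,\alpha}_{\varepsilon_i, \nu}} \longrightarrow 0,
\end{equation*}
together with points $x_i = (y_i, t_i) \in \Gamma \times \mathbb{R}$ at which the weighted local seminorm of $w_i$ is at least $1/2$.

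Next, I blow up around $x_i$. In normal coordinates $\xi \in T_{y_i}\Gamma \cong \mathbb{R}^n$ based at $y_i$, set $\tilde y := \xi/\varepsilon_i$, $s_i := t_i/\varepsilon_i$, and
\begin{equation*}
\tilde w_i(\tilde y, \tilde s) := d_\Gamma(y_i)^{-\nu}\, w_i\bigl(\exp_{y_i}(\varepsilon_i \tilde y),\, \varepsilon_i(\tilde s + s_i)\bigr).
\end{equation*}
Since $u_{\varepsilon_i}(\varepsilon_i s) = u_1(s)$, the rescaled operator
\begin{equation*}
\tilde L_i := \partial_{\tilde s}^2 + \Delta_{\tilde g_i} + 1 - 3\, u_1^2(\tilde s + s_i),
\end{equation*}
with $\tilde g_i$ the rescaled pull-back of $\mathring g$, satisfies $\tilde L_i \tilde w_i \to 0$ in $\mathcal{C}^{0,\alpha}_{\mathrm{loc}}(\mathbb{R}^{n+1})$. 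Since $\Gamma$ is asymptotic to a minimal cone, (\ref{eq:7-20}) applied to $\mathring h$ shows that its principal curvatures at $y_i$ are $O(1/d_\Gamma(y_i))$; combined with $d_\Gamma(y_i) \geq 1 \gg \varepsilon_i$, this implies $\tilde g_i \to \delta$ in $\mathcal{C}^k_{\mathrm{loc}}$ for every $k$. The ratio $d_\Gamma(\exp_{y_i}(\varepsilon_i\tilde y))/d_\Gamma(y_i)$ tends to $1$ uniformly on compact sets in $\tilde y$, so the normalization produces uniform local $\mathcal{C}^{2,\alpha}$ bounds on $\tilde w_i$. Arzel\`a--Ascoli then furnishes a limit $\tilde w \in \mathcal{C}^{2,\beta}_{\mathrm{loc}}(\mathbb{R}^{n+1})$ (any $\beta < \alpha$) which is globally bounded and satisfies $\|\tilde w\|_{\mathcal{C}^{2,\alpha}(B_1(0))} \geq 1/4$.

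The argument now splits according to whether $\{s_i\}$ is bounded. \emph{Case 1:} $s_i \to s_\ast \in \mathbb{R}$ along a subsequence. Then $\tilde w$ solves $L_\ast \tilde w = 0$ on $\mathbb{R}^{n+1}$ up to the trivial shift $\tilde s \mapsto \tilde s + s_\ast$, so Lemma~\ref{le:12.2} forces $\tilde w(\tilde y, \tilde s) = c\, u_1'(\tilde s + s_\ast)$ for a constant $c \in \mathbb{R}$. Passing the orthogonality (\ref{eq:12-10}) to the limit---dominated convergence is licit by the exponential decay of $u_1'$---gives $c\int_{\mathbb{R}} (u_1')^2\, d\tilde s = 0$, hence $c = 0$, contradicting $\|\tilde w\|_{\mathcal{C}^{2,\alpha}(B_1)} \geq 1/4$. \emph{Case 2:} $|s_i| \to \infty$. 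Then $1 - 3 u_1^2(\tilde s + s_i) \to -2$ uniformly on compacts, so $\tilde w$ satisfies $(\Delta_{\mathbb{R}^{n+1}} - 2)\tilde w = 0$; boundedness of $\tilde w$ and the weak maximum principle force $\tilde w \equiv 0$, again a contradiction.

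The main technical obstacle is the case $d_\Gamma(y_i) \to \infty$, where one must ensure that the blown-up intrinsic geometry of $\Gamma$ flattens to $\mathbb{R}^n$ and that the weight $d_\Gamma^\nu$ behaves continuously under rescaling on every compact. Both are direct consequences of the asymptotic-cone hypothesis, through (\ref{eq:7-20}) and the triangle inequality $|d_\Gamma(y) - d_\Gamma(y_i)| \leq d_{\mathring g}(y, y_i)$. A secondary subtlety is that in Case~2 the orthogonality condition becomes vacuous in the limit, so the contradiction there must come entirely from the Liouville property of $(\Delta_{\mathbb{R}^{n+1}} - 2)u = 0$ for bounded $u$.
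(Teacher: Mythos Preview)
Your argument is correct and follows the same blow-up/contradiction strategy as the paper, with only cosmetic differences: the paper first reduces to an $L^\infty$ estimate via Schauder and then uses a barrier (the constant function as supersolution where $|t|\geq t_*\varepsilon$) to force $|t_i|\leq C\varepsilon_i$ \emph{before} rescaling, whereas you keep the full $\mathcal C^{2,\alpha}$ normalization and handle $|s_i|\to\infty$ as a separate limiting case via the Liouville property of $\Delta_{\mathbb R^{n+1}}-2$.

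One small imprecision: from $\mathcal C^{2,\beta}$ convergence ($\beta<\alpha$) you cannot directly conclude $\|\tilde w\|_{\mathcal C^{2,\alpha}(B_1)}\geq 1/4$, since the top H\"older seminorm is only lower semicontinuous.  What you actually need is $\tilde w\not\equiv 0$, and this follows from interior Schauder: $\|\tilde w_i\|_{\mathcal C^{2,\alpha}(B_1)}\leq C\bigl(\|\tilde w_i\|_{L^\infty(B_2)}+\|\tilde L_i\tilde w_i\|_{\mathcal C^{0,\alpha}(B_2)}\bigr)$ together with $\|\tilde w_i\|_{\mathcal C^{2,\alpha}(B_1)}\geq 1/2$ and $\tilde L_i\tilde w_i\to 0$ gives a uniform lower bound on $\|\tilde w_i\|_{L^\infty(B_2)}$, which \emph{does} pass to the limit.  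This is presumably what you meant, and it is exactly why the paper reduces to the $L^\infty$ estimate at the outset.
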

\begin{proof}
Observe that, by elliptic regularity theory, it is enough to prove that 
\[
	\|  (d_\Gamma)^{-\nu}   \, w \|_{L^\infty (\Gamma \times \mathbb R )} \leq C  \, \|   (d_\Gamma)^{-\nu}  \, 
	  L_\varepsilon \, w\|_{ L^\infty (\Gamma \times \mathbb R  )} .
\]
The proof of this inequality is by contradiction. We assume that, for a sequence $\varepsilon_i$ tending to $0$ there exists a function $w_i$ such that 
\[
	\|  (d_\Gamma )^{-\nu} \, w_i \|_{L^\infty (\Gamma \times \mathbb R  )}  = 1,
\]
and 
\[
	\lim_{i \rightarrow \infty} \|   d_\Gamma (y)^{-\nu}  \, L_{\varepsilon_i} \, w_i \|_{ L^\infty (\Gamma \times \mathbb R  )}  =0
\]
For each $i \in \mathbb N$, we choose a point $x_i : = (y_i, t_i) \in \Gamma \times \mathbb R$ where 
\[
	 d_\Gamma (y_i)^{-\nu}   \, | w_i (y_i, t_i)| \geq 1/2.
\]
Arguing as in the proof of Lemma~\ref{le:12.2}, one can prove that the sequence $t_i$ tends to $0$ and more precisely that $|t_i| \leq C \, \varepsilon_i$. Indeed, the constant function can be used as a super-solution to show that necessarily $|t_i| \leq t_* \, \varepsilon_i$. 

Now, we use
\[
y \in T_{y_i} \Gamma \longmapsto {\rm Exp_{y_i}^\Gamma  (y)} \in \Gamma ,
\] 
the exponential map on $\Gamma$, at the point $y_i$, to define 
\[
\tilde w_i (y,t) : =   d_\Gamma (y_i)^{-\nu} \, w_i ( {\rm Exp_{y_i}^\Gamma  (\varepsilon_i \, y)}, \varepsilon_i \, t),
\]
which is defined on $ T_{y_i} \Gamma \times \mathbb R$.

Using elliptic estimates together with Ascoli's Theorem, we can extract subsequences and pass to the limit in the equation satisfied by $\tilde w_i$. We find that (up to a subsequence) $\tilde w_i$ converges, uniformly on compacts to $\tilde w$ which is a non trivial solution of
\[
\left (\partial_t^2 + 1 - 3 \, u_1^2 + \Delta_{\mathbb R^n}\right) \tilde w =0 ,
\]
in $\mathbb R^n \times \mathbb R$. In addition $\tilde w$ satisfies (\ref{eq:12-5}) and $\tilde w \in L^\infty ( \mathbb R^n \times \mathbb R)$. Since this clearly contradicts the result of Lemma~\ref{le:12.2}, the proof of the result is therefore complete.
\end{proof}

For all $R >0$, we denote by $B_R$ the geodesic ball of radius $R$ in $\Gamma$, centered at $y_0$. Using  arguments which are similar to the arguments used in the previous proof, one can prove the following result~:
\begin{proposition}
Assume that  $\nu \in \mathbb R$ is fixed. Then, there exist  constants $C >0$ and $\varepsilon_0 >0$ such that, for all $\varepsilon \in (0, \varepsilon_0)$, for all $R >1$ and all $w \in L^\infty  (B_R \times \mathbb R )$ which vanishes on $\partial B_R \times \mathbb R$ and which satisfies (\ref{eq:12-10}), we have 
\begin{equation}
\| (d_\Gamma)^{-\nu} \, w \|_{L^\infty  (B_R \times \mathbb R  )} \leq C \,  \|  (d_\Gamma )^{-\nu} \, L_\varepsilon \, w\|_{L^\infty (B_R \times \mathbb R  )} .
\label{eq:12-100}
\end{equation}
\label{pr:aprioririi}
\end{proposition}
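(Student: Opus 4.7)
My plan is to adapt the blow-up/contradiction scheme of Proposition~\ref{pr:apriori}, adding a case analysis at the end to handle the Dirichlet boundary on $\partial B_R \times \mathbb R$. I would suppose for contradiction that the estimate fails, producing sequences $\varepsilon_i \to 0$, $R_i > 1$, and $w_i \in L^\infty(B_{R_i} \times \mathbb R)$, each vanishing on $\partial B_{R_i} \times \mathbb R$, satisfying (\ref{eq:12-10}), with
\[
\|(d_\Gamma)^{-\nu} w_i\|_{L^\infty(B_{R_i} \times \mathbb R)} = 1 \qquad \text{and} \qquad \|(d_\Gamma)^{-\nu} L_{\varepsilon_i} w_i\|_{L^\infty(B_{R_i} \times \mathbb R)} \to 0.
\]
I would pick $x_i = (y_i, t_i) \in B_{R_i} \times \mathbb R$ at which $d_\Gamma(y_i)^{-\nu}|w_i(y_i,t_i)| \geq 1/2$, and then use the constant function as a supersolution in the region where $1 - 3u_{\varepsilon_i}^2 \leq -1$ to conclude $|t_i| \leq C\,\varepsilon_i$, exactly as in the cited proof; after rescaling by $\varepsilon_i$ the vertical coordinate therefore stays bounded.

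The only genuinely new step is to track the horizontal location of $y_i$ with respect to $\partial B_{R_i}$. Set $d_i := \mathrm{dist}_{\mathring g}(y_i,\partial B_{R_i})/\varepsilon_i$ and split into two cases. If $d_i \to \infty$, the rescaled function
\[
\tilde w_i(y,t) := d_\Gamma(y_i)^{-\nu}\, w_i\bigl(\mathrm{Exp}_{y_i}^{\Gamma}(\varepsilon_i y),\varepsilon_i t\bigr)
\]
is defined on a ball of $T_{y_i}\Gamma$ whose radius tends to infinity, and the blow-up argument of Proposition~\ref{pr:apriori} carries over verbatim: local elliptic estimates together with Ascoli's theorem furnish a bounded nontrivial limit $\tilde w$ on $\mathbb R^n \times \mathbb R$ solving $(\partial_t^2 + \Delta_{\mathbb R^n} + 1 - 3u_1^2)\tilde w = 0$ and satisfying the fiberwise orthogonality (\ref{eq:12-5}), which directly contradicts Lemma~\ref{le:12.2}.

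The hard part is the complementary case: if $d_i$ stays bounded along a subsequence, the rescaled domain converges to a half-space $\mathbb H \subset \mathbb R^n$ whose bounding hyperplane lies at finite distance from the origin, and the limit $\tilde w$ is a bounded nontrivial solution of the same limit equation on $\mathbb H \times \mathbb R$, still satisfying the fiberwise orthogonality but now also vanishing on $\partial \mathbb H \times \mathbb R$. My plan here is to extend $\tilde w$ by odd reflection across $\partial \mathbb H$. The key observation is that (\ref{eq:12-10}) is a condition in the $t$-variable only and is therefore preserved by horizontal reflection, so the extension is a bounded solution on all of $\mathbb R^n \times \mathbb R$ still obeying (\ref{eq:12-5}). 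Lemma~\ref{le:12.2} then forces the extension to be a scalar multiple of $u_1'(t)$, and (\ref{eq:12-5}) forces that scalar to vanish, contradicting $|\tilde w(0,0)| \geq 1/2$. The main obstacle is precisely this reduction to Lemma~\ref{le:12.2} via odd reflection; once it is in place, standard interior and up-to-the-boundary elliptic estimates suffice to pass to the limit in both cases and the proof closes.
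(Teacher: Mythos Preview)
Your proposal is correct and follows essentially the same approach as the paper's own proof: a blow-up contradiction argument as in Proposition~\ref{pr:apriori}, with the additional half-space case handled by odd reflection across $\partial\mathbb H$ to reduce to Lemma~\ref{le:12.2}. The paper's proof is more terse but identical in substance, noting in addition that boundary elliptic estimates prevent $d_i$ from tending to zero (which your formulation absorbs into the half-space case).
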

\begin{proof}
The proof of this result is similar to the proof of the previous result. There is though one extra case to consider in the argument by contradiction : the case where the limit problem is defined in a half space. But this case can be ruled out by extending the function $\tilde w$ in the whole space by odd reflection. 

Indeed, keeping the notations of the previous proof, we now also need to consider the case where the distance from the point $y_i$ to the boundary of $\partial B_{R_i} \times \mathbb R$ is of order $\varepsilon_i$ (observe that elliptic estimates imply that this distance cannot be much smaller than $\varepsilon_i$ since the functions are assumed to vanish on $\partial B_{R_i} \times \mathbb R$, and hence, their gradient is controlled in a neighborhood of $\partial B_{R_i} \times \mathbb R$). In this case, up to a rigid motion, the limit problem is again 
\[
\left (\partial_t^2 + 1 - 3 \, u_1^2 + \Delta_{\mathbb R^n}\right) \tilde w =0,
\]
but, this time, the function $\tilde w$ is defined on $\mathbb R^{n}_+ \times \mathbb R$, where $\mathbb R^{n}_+ : = \{ (x_1, \ldots, x_n) \in \mathbb R^n \, : \, x_n >0\}$ and $\tilde w$ vanishes on $\partial \mathbb R^{n}_+ \times \mathbb R$.  Extending the function $\tilde w$ to all $\mathbb R^n \times \mathbb R$ by odd reflection, we reduce the problem to a case which is already studied in the previous proof. Details are left to the reader. 
\end{proof}

\subsection{The surjectivity result}

The final result of this section is the surjectivity of the operator $L_\varepsilon$ acting on the space of functions satisfying (\ref{eq:12-10}). 
\begin{proposition}
Assume that  $\nu  < 0$ is fixed. Then, there exists $\varepsilon_0 >0$ such that, for all $\varepsilon \in (0, \varepsilon_0)$ and for all $f \in  \mathcal C^{0, \alpha}_{\varepsilon, \nu}  (\Gamma  \times \mathbb R )$ satisfying (\ref{eq:12-10}), there exists a unique function $w \in \mathcal C^{2, \alpha}_{\varepsilon, \nu} (\Gamma  \times \mathbb R)$ which also satisfies (\ref{eq:12-10}) and which is a solution of 
\[
L_\varepsilon \, w  =  f ,
\]
in $\Gamma  \times \mathbb R$. 
\label{pr:8.3}
\end{proposition}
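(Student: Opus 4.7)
The plan is to combine the \emph{a priori} estimate of Proposition~\ref{pr:aprioririi} with a variational existence argument on bounded cylindrical sub-domains, followed by a compactness argument as the sub-domain exhausts $\Gamma \times \mathbb{R}$.

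For each $R > 1$, I would first solve the problem on $B_R \times \mathbb{R}$ with zero Dirichlet condition on $\partial B_R \times \mathbb{R}$. The natural Hilbert space is
\[
\mathcal{H}_R := \left\{ w \in H^1_0(B_R \times \mathbb{R}) : \int_\mathbb{R} w(y,t) \, \dot u_\varepsilon(t) \, dt = 0 \text{ for a.e.\ } y \in B_R \right\},
\]
endowed with the bilinear form associated to $-L_\varepsilon$. By Lemma~\ref{le:12-1}, applied pointwise in $y$ after the rescaling $s = t/\varepsilon$, the one-dimensional quadratic form satisfies $\int_\mathbb{R}(\varepsilon^2 |\partial_t w|^2 - w^2 + 3 u_\varepsilon^2 w^2)\, dt \geq \tfrac{3}{2} \int_\mathbb{R} w^2 \, dt$ for $w \in \mathcal{H}_R$. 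Integrating over $B_R$ and adding the non-negative $\varepsilon^2 |\nabla_{\mathring g} w|^2$ term yields $H^1$-coercivity on $\mathcal{H}_R$. Lax--Milgram then produces a unique $w_R \in \mathcal{H}_R$ with $L_\varepsilon w_R = f$; elliptic regularity upgrades $w_R$ to $\mathcal{C}^{2,\alpha}_{\mathrm{loc}}$.

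Next, Proposition~\ref{pr:aprioririi} furnishes the uniform bound $\|(d_\Gamma)^{-\nu} w_R\|_{L^\infty(B_R \times \mathbb{R})} \leq C \, \|(d_\Gamma)^{-\nu} f\|_{L^\infty}$, with $C$ independent of $R$ and of $\varepsilon \in (0, \varepsilon_0)$. Schauder estimates on the $g_\varepsilon$-balls of radius $\varepsilon$ then bootstrap this into a uniform bound on $\|w_R\|_{\mathcal{C}^{2,\alpha}_{\varepsilon,\nu}(B_R \times \mathbb{R})}$. A diagonal Arzel\`a--Ascoli extraction, using convergence in $\mathcal{C}^{2,\alpha'}_{\mathrm{loc}}$ for some $\alpha' < \alpha$, produces a limit $w$ solving $L_\varepsilon w = f$ on $\Gamma \times \mathbb{R}$; the orthogonality (\ref{eq:12-10}) is linear and passes to the limit, and the weighted norm bound is preserved by lower semicontinuity. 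Uniqueness follows from applying Proposition~\ref{pr:apriori} to the difference of two putative solutions.

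The main obstacle lies in the coercivity step: one must confirm that $L_\varepsilon$ stabilizes the orthogonality condition, so that the Lax--Milgram solution in $\mathcal{H}_R$ genuinely solves the full equation. This reduces to verifying that $L_\varepsilon(\alpha(y) \dot u_\varepsilon(t)) = \varepsilon^2 (\Delta_{\mathring g} \alpha) \, \dot u_\varepsilon$, which follows from the identity $\varepsilon^2 \partial_t^2 \dot u_\varepsilon + (1 - 3 u_\varepsilon^2) \dot u_\varepsilon = 0$ obtained by differentiating (\ref{eq:fp1}); combined with the hypothesis that $f$ satisfies (\ref{eq:12-10}), this ensures the test/trial structure is consistent. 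The assumption $\nu < 0$ enters only at the passage to the limit: it forces $|w_R(y,t)| \leq C \, d_\Gamma(y)^\nu \to 0$ as $d_\Gamma(y) \to \infty$, preventing mass from escaping to infinity in $\Gamma$ and guaranteeing that the locally extracted limit lies in the weighted space $\mathcal{C}^{2,\alpha}_{\varepsilon,\nu}$ with the same norm bound.
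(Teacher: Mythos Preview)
Your proof follows essentially the same strategy as the paper's: variational existence on $B_R \times \mathbb{R}$ via Lax--Milgram (using the coercivity inherited from Lemma~\ref{le:12-1}), the uniform a priori estimate of Proposition~\ref{pr:aprioririi}, and passage to the limit $R \to \infty$, with uniqueness coming from Proposition~\ref{pr:apriori}. The only technical point you gloss over, and which the paper handles explicitly, is that a general $f \in \mathcal{C}^{0,\alpha}_{\varepsilon,\nu}(\Gamma\times\mathbb{R})$ carries no decay in the $t$-variable and hence need not lie in $L^2(B_R \times \mathbb{R})$, so the linear functional $w\mapsto \int fw$ may fail to be bounded on $H^1_0$; the paper remedies this by first treating compactly supported $f$ and then approximating a general $f$ by an exhaustion of compactly supported ones.
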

\begin{proof}
To begin with, we assume that $f$ has compact support and, for all $R >1$ large enough, we use the variational structure of the problem and consider the functional
\[
F (w) : =  \int_{B_R \times \mathbb R}  \left( \varepsilon^2 (|\partial_t w|^2 + |\nabla w|^2_{\mathring g}) - w^2 + 3 \, u_1^2 \, w^2\right)  \, {\rm dvol}_{\mathring g} \, dt ,
\]
acting on the space of functions $w \in H^1_0 (B_R \times \mathbb R)$ which satisfy (\ref{eq:12-10}) for a.e. $y \in \Gamma$. 

Thanks to Lemma~\ref{le:12-1}, we know that 
\[
F ( w ) \geq  \frac{3}{2} \,  \int_{B_R \times \mathbb R}  w^2 \,  {\rm dvol}_{\mathring g}\, dt.
\]
Now, given $f \in L^2 (B_R \times \mathbb R)$, we can apply Lax-Milgram's Theorem to obtain a weak solution of $L_\varepsilon \, w  = f$ in $H^1_0 (B_R \times \mathbb R)$.  

Assuming that $f \in L^\infty (\Gamma \times \mathbb R)$ has compact support, we make use  of the result of Proposition~\ref{pr:aprioririi} to get an {\em a priori} estimate for the solution which is defined in $B_R \times \mathbb R$ and standard arguments allows one to pass to the limit  as $R$ tends to $\infty$ to obtain the desired solution defined on $\Gamma \times \mathbb R$. Once the proof is complete when the function $f$ has compact support, the proof for general $f$ follows at once using an exhaustion by a sequence of functions $f_i$ which have compact support and converge uniformly to $f$ on compacts. 
\end{proof}

\section{Study of a strongly coercive operator}

This short section is devoted to the mapping properties of the operator 
\begin{equation}
\mathcal L_\varepsilon : =  \varepsilon^2 \, \Delta - 2 .
\label{eq:seo}
\end{equation}
Certainly this operator satisfies the maximum principle and solvability of the equation $\mathcal L_\varepsilon \, w =f$ and obtention of the estimates boils down to the construction of appropriate super-solutions.  we define 

Given a point $x \in \mathbb R^{n+1}$, we define $ \|Êw\|_{\mathcal C^{k, \alpha}_{\varepsilon^2 g_e} ( B_{\varepsilon} (x , \varepsilon )) },$ to be the $\mathcal C^{k, \alpha}$ norm of the function $w$ in $B_{\varepsilon} (x , \varepsilon )$, the geodesic ball of radius $1$ centered at the point $x$, when $\mathbb R^{n+1}$ is endowed with the scaled metric $\varepsilon^2 \, g_e$.  With these notations in mind, we can state the~:
\begin{definition}
For all $k \in \mathbb N$, $\alpha \in (0,1)$ and $\nu \in \mathbb R$, the space $ \mathcal C^{k, \alpha}_{\varepsilon, \nu} (\mathbb R^{n+1})$ is the space of functions $ w\in \mathcal C^{k, \alpha}_{\rm loc} (\mathbb R^{n+1} )$ for which the following norm
\[
	\| w \|_{ \mathcal C^{k, \alpha}_{\varepsilon,  \nu } (\mathbb R^{n+1} )} : 
	=  \sup_{ x  \in \mathbb R^{n+1} }  \, (1 + |x|^2)^{-\nu/2} \, \|Êw\|_{\mathcal C^{k, \alpha}_{\varepsilon^2 g_e} ( B_\varepsilon ( x, \varepsilon )) },
\]
if finite. 
\end{definition}

Some comments are due concerning the notations. The function space $ \mathcal C^{k, \alpha}_{\varepsilon, \nu} (\mathbb R^{n+1})$ should not be confused with the function space  $\mathcal C^{k, \alpha}_{\varepsilon, \nu} (\Gamma \times \mathbb R)$ even though, in spirit, the definition of their norms are very similar.

We have the~:
\begin{proposition}
Assume that $\nu \in \mathbb R$ is fixed. There exists a constant $C >0$ such that 
\begin{equation}
	\|  w \|_{\mathcal C^{2, \alpha}_{\varepsilon , \nu} ( \mathbb R^{n+1})} \leq C \,  \|  \mathcal L_\varepsilon \,  w\|_{\mathcal C^{0, \alpha}_{\varepsilon , \nu} (\mathbb R^{n+1})} ,
\end{equation} 
provided $\varepsilon \in (0,1)$. 
\label{pr:10.91}
\end{proposition}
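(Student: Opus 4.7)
The plan is to follow the authors' own hint in the preamble of the section and construct an appropriate supersolution, then invoke the maximum principle. The key structural feature of $\mathcal L_\varepsilon = \varepsilon^2 \Delta - 2$ is the strongly coercive zeroth-order term $-2$, so no orthogonality condition is required and the argument is substantially simpler than that of Proposition~\ref{pr:apriori}. As a first step, I would reduce the weighted $\mathcal C^{2,\alpha}$ bound to the weighted $L^\infty$ bound
\[
\sup_x (1+|x|^2)^{-\nu/2} |w(x)| \;\leq\; C \, \sup_x (1+|x|^2)^{-\nu/2} |\mathcal L_\varepsilon w(x)|.
\]
Under the rescaling $\tilde x := x/\varepsilon$, the operator $\mathcal L_\varepsilon$ transforms into the fixed constant-coefficient operator $\Delta_{\tilde x} - 2$, and the ball $B_\varepsilon(x,\varepsilon)$ in the metric $\varepsilon^2 g_e$ maps to the unit Euclidean ball centered at $\tilde x$; standard interior Schauder estimates on unit balls for $\Delta - 2$, together with the observation that the weight $(1+|x|^2)^{\nu/2}$ varies only by a bounded factor on such a ball, furnish this reduction with a constant independent of $\varepsilon \in (0,1)$.

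For the $L^\infty$ estimate, the natural barrier is $W(x) := (1+|x|^2)^{\nu/2}$. A direct computation gives
\[
\frac{\Delta W}{W} \;=\; \frac{(n+1)\nu + \nu(n+\nu-1)|x|^2}{(1+|x|^2)^2},
\]
so $|\Delta W/W| \leq C_\nu (1+|x|^2)^{-1}$, and hence
\[
\mathcal L_\varepsilon W \;=\; \bigl( \varepsilon^2 \Delta W/W - 2 \bigr) W \;\leq\; -W
\]
on the complement of a fixed ball $B_{R_0}$ whose radius $R_0$ depends only on $\nu$ and $n$, uniformly in $\varepsilon \in (0,1)$. Setting $M := \sup (1+|x|^2)^{-\nu/2}|\mathcal L_\varepsilon w|$ and $\phi_\pm := \pm w - CMW$ with $C$ large, one checks that $\mathcal L_\varepsilon \phi_\pm \geq 0$ on $\mathbb R^{n+1}\setminus B_{R_0}$. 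Because the zeroth-order coefficient of $\mathcal L_\varepsilon$ is $-2 < 0$ and because the hypothesis $w \in \mathcal C^{2,\alpha}_{\varepsilon,\nu}$ prevents $\phi_\pm$ from attaining a positive supremum at infinity, the weak maximum principle gives $|w| \leq CMW$ on $\mathbb R^{n+1}\setminus B_{R_0}$, up to a contribution from the boundary values on $\partial B_{R_0}$.

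The main obstacle is closing the estimate inside the bounded region $B_{R_0}$, where the transformed zeroth-order coefficient $\varepsilon^2 \Delta W/W - 2$ can fail to be negative when $\varepsilon$ is of order one. I would handle this by a local bound: for $\varepsilon$ bounded below, $\mathcal L_\varepsilon$ is uniformly elliptic on $B_{R_0}$ and classical local $L^\infty$ estimates for elliptic operators with negative zeroth-order term give $\|w\|_{L^\infty(B_{R_0})} \leq C \|\mathcal L_\varepsilon w\|_{L^\infty(B_{R_0})} + \frac{1}{2} \|w\|_{L^\infty(\partial B_{R_0})}$; for $\varepsilon$ small, the algebraic rearrangement $w = -\frac{1}{2} \mathcal L_\varepsilon w + \frac{\varepsilon^2}{2} \Delta w$ together with an $\varepsilon$-small Schauder correction on $B_{R_0}$ produces the analogous bound. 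Gluing the interior and exterior estimates through the common boundary $\partial B_{R_0}$ yields the global weighted $L^\infty$ inequality, and the Schauder upgrade from the first paragraph then delivers the $\mathcal C^{2,\alpha}$ estimate claimed by the proposition.
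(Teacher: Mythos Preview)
Your approach is essentially the paper's: the authors' proof is a three-line sketch saying that $|x|^\nu$ serves as a barrier away from a compact set to get the weighted $L^\infty$ bound, after which Schauder estimates on balls of radius $\varepsilon$ finish the job. Your barrier $W=(1+|x|^2)^{\nu/2}$ is equivalent, and your Schauder reduction is exactly what they invoke.

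Where you diverge from the paper is in trying to be explicit about the compact region $B_{R_0}$, and here the details you propose do not quite close. The estimate
\[
\|w\|_{L^\infty(B_{R_0})} \;\leq\; C\,\|\mathcal L_\varepsilon w\|_{L^\infty(B_{R_0})} + \tfrac12\,\|w\|_{L^\infty(\partial B_{R_0})}
\]
with the factor $\tfrac12$ on the boundary term is not what the maximum principle gives; one only gets $\sup_{B_{R_0}}|w|\le\max\bigl(\sup_{\partial B_{R_0}}|w|,\ \tfrac12\sup_{B_{R_0}}|f|\bigr)$, which is not small enough to absorb into the exterior estimate when you glue. Likewise, in the small-$\varepsilon$ branch the identity $w=-\tfrac12\mathcal L_\varepsilon w+\tfrac{\varepsilon^2}{2}\Delta w$ does not help, since the only Schauder bound you have for $\varepsilon^2\Delta w$ is of size $\|w\|_{L^\infty}$, not $o(1)\|w\|_{L^\infty}$.

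A cleaner route that avoids any gluing: work directly with $v:=w/W$ and note that $v$ satisfies an equation whose zeroth-order coefficient is $\varepsilon^2\,\Delta W/W-2$. Your own computation shows $|\Delta W/W|\le C_\nu(1+|x|^2)^{-1}\le C_\nu$, so this coefficient is $\le C_\nu-2$ everywhere. If $C_\nu<2$ the global maximum principle on $v$ gives the bound immediately for all $\varepsilon\in(0,1)$; if $C_\nu\ge2$, simply replace $W$ by a comparable weight $\tilde W$ that agrees with $W$ outside $B_{R_0}$ and is modified on the compact set so that $\Delta\tilde W/\tilde W<2$ there (for instance multiply $W$ by a suitable radially concave profile on $B_{R_0}$). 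Either way no case split in $\varepsilon$ and no interior/exterior gluing is needed.
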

\begin{proof}
Away from a compact, the function $x \longmapsto |x|^{\nu}$ can be used as a barrier to prove that
\[
	\|   (1 + |x|^2)^{-\nu/2} \, w \|_{L^\infty ( \mathbb R^{n+1})} \leq C \,  \|  (1 + |x|^2)^{-\nu/2} \,  \mathcal L_\varepsilon \,  w \|_{L^\infty (\mathbb R^{n+1})} ,
\]
for some constant $C >0$ which is independent of $\varepsilon$. The estimate is then a consequence of standard elliptic estimates applied on geodesic balls of radius $\varepsilon$.
\end{proof}

\section{The study of the Jacobi operator}
\label{se:10}
In this section, we introduce appropriate functions spaces in which the equation 
\[
J_\Gamma \, \zeta = \xi,
\]
can be solved for some function $\xi$ defined on $\Gamma$. 

We recall that, by definition 
\[
d_\Gamma (y)  : =  \sqrt{ 1+ {\rm dist}_{\mathring g}Ê(y, y_0)^2} ,
\]
where $y_0$ is a given point in $\Gamma$. 

\begin{definition}
For all $k \in \mathbb N$, $\alpha \in (0,1)$ and $ \nu \in \mathbb R$, the space $\mathcal C^{k, \alpha}_{\nu} (\Gamma )$ is the space of functions $ w\in \mathcal C^{k, \alpha}_{\rm loc} (\Gamma )$ for which the following norm
\[
\| \zeta \|_{ \mathcal C^{k, \alpha}_{\nu } (\mathbb R \times \Gamma )} : =  \sup_{ y \in  \Gamma} \left(  d_\Gamma (y)^{- \nu} \, \|Ê\zeta \|_{\mathcal C^{k, \alpha}_{\mathring g} ( B ( y, 1 )) } \, \right) ,
\]
if finite. Here, $B ( y, 1 )$ denotes the geodesic ball of radius $1$ in $\Gamma$, centered at $y$.
\end{definition}

Obviously
\[
J_\Gamma : \mathcal C^{2, \alpha}_\nu (\Gamma) \longrightarrow
\mathcal C^{0, \alpha}_{\nu-2} (\Gamma).
\]

The mapping properties of the operator $J_\Gamma$, defined between these weighted spaces, are intimately related to the indicial roots which have been defined in (\ref{eq:3-9}).

\begin{proposition}
Assume that $\nu \neq \gamma_j^{\pm}$, for all $j \in \mathbb N$ and define 
\[
\nu' = n - 2 + \nu .
\]
Then, the operator 
\[
J_\Gamma : \mathcal C^{2, \alpha}_\nu (\Gamma) \longrightarrow \mathcal C^{0, \alpha}_{\nu-2} (\Gamma),
\]
is {\em injective}, if and only if the operator 
\[
J_\Gamma : \mathcal C^{2, \alpha}_{\nu'} (\Gamma) \longrightarrow
\mathcal C^{0, \alpha}_{\nu' - 2} (\Gamma) ,
\]
is {\em surjective}.
\label{pr:10.10}
\end{proposition}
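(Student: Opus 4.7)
My plan is to deduce the proposition from the Fredholm theory of elliptic operators on asymptotically conical manifolds (in the spirit of Lockhart--McOwen, see \cite{Caf-Har-Sim, Cha}) combined with the formal self-adjointness of $J_\Gamma$ on $L^2(\Gamma, dV_{\mathring g})$. The first step is to establish, for every $\nu \notin \{\gamma_j^\pm\}_{j \geq 0}$, that $J_\Gamma : \mathcal{C}^{2,\alpha}_\nu \to \mathcal{C}^{0,\alpha}_{\nu-2}$ is Fredholm of finite index, and likewise for $\nu'$. Because $\Gamma$ is asymptotic to $C = C_\Lambda$, in the cone coordinates $(t,z) \in \mathbb{R}\times \Lambda$ the operator $J_\Gamma$ is a small perturbation at infinity of $J_C = e^{-2t}\bigl(\partial_t^2 + (n-2)\partial_t + \Delta_{\bar g} + \mathrm{Tr}_{\bar g}(\bar h \otimes \bar h)\bigr)$; decomposing a function on the asymptotic end as $\sum_j a_j(t)\varphi_j(z)$ reduces $J_C\zeta = \xi$ to uncoupled second-order ODEs with indicial polynomial $\gamma^2+(n-2)\gamma - \mu_j = 0$ whose roots are precisely $\gamma_j^\pm$. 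Away from these roots each ODE can be inverted mode by mode, yielding a parametrix on the asymptotic end that, combined with interior elliptic regularity, gives both the a priori estimate and the Fredholm property.

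Next I would identify the cokernel of $J_\Gamma$ at weight $\nu'$ with the kernel at weight $\nu$ via Green's second identity. For $\zeta \in \mathcal{C}^{2,\alpha}_\nu$ with $J_\Gamma\zeta = 0$ and $\eta \in \mathcal{C}^{2,\alpha}_{\nu'}$, integration by parts on the exhaustion $B_R \subset \Gamma$ gives
\[
\int_{B_R} \zeta\, J_\Gamma \eta \, dV_{\mathring g} \;=\; \int_{\partial B_R}\bigl(\zeta\, \partial_r \eta - \eta\, \partial_r \zeta\bigr) \, dA_{\mathring g},
\]
where $\partial_r$ denotes the outward conormal derivative along $\partial B_R$ in $\Gamma$. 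In the asymptotic conical region the volume form on $\Gamma$ grows like $e^{nt}$ and the area element on $\{d_\Gamma = R\}$ like $e^{(n-1)t}$; decomposing $\zeta$ in leading order into indicial modes $e^{\gamma_j^{\pm} t}\varphi_j$ and using the Wronskian-type identity $\partial_t\bigl(e^{(n-2)t}(\zeta\partial_t\eta - \eta\partial_t\zeta)\bigr) = 0$ for $J_C$-harmonic pairs shows that the shift $\nu' - \nu = n-2$ is exactly what is needed for the pairing $\xi \mapsto \int_\Gamma \zeta\,\xi\, dV_{\mathring g}$ to extend continuously to $\mathcal{C}^{0,\alpha}_{\nu'-2}$ and to vanish on $J_\Gamma(\mathcal{C}^{2,\alpha}_{\nu'})$. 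Nondegeneracy of the pairing follows from the fact that conjugate indicial roots $\gamma_j^+$ and $\gamma_j^-$ produce linearly independent ODE solutions with nonzero Wronskian on each eigenmode.

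Combining these two steps, surjectivity of $J_\Gamma$ at weight $\nu'$ is equivalent, by the Fredholm property, to vanishing of its cokernel, which by the pairing constructed above is equivalent to the kernel at weight $\nu$ being zero, i.e.\ to injectivity at $\nu$. The main technical obstacle is the careful bookkeeping in the duality step: one needs to justify the convergence of the boundary integrals, extend the pairing from indicial-mode expansions to the full weighted H\"older spaces, and verify completeness (every cokernel element is detected by some kernel element at the dual weight), all of which rest on a precise matched asymptotic expansion of $J_\Gamma$-harmonic functions on $\Gamma$ in terms of the model operator $J_C$ on the cone.
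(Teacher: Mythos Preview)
The paper does not actually prove this proposition; it simply declares the result ``by now standard'' and refers to \cite{Caf-Har-Sim}, \cite{Cha}, \cite{Har-Sim} and \cite{Pac}. Your outline---Fredholm theory for elliptic operators on asymptotically conical manifolds in the Lockhart--McOwen style, followed by an $L^2$ duality argument exploiting the formal self-adjointness of $J_\Gamma$---is precisely the standard route taken in those references, so in spirit your approach and the paper's coincide.

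There is, however, a genuine problem in your duality step, and it is inherited from what appears to be a typographical error in the statement itself. The relation $\nu' = n-2+\nu$ should read $\nu' = 2-n-\nu$ (equivalently $\nu+\nu' = 2-n = \gamma_j^+ + \gamma_j^-$ for every $j$). You can see this from how the result is actually used: the paper invokes injectivity for $\nu < \nu_0^+$ (Lemma~\ref{le:1010}) to conclude surjectivity for $\nu > \nu_0^-$, and this is the reflection $\nu \mapsto 2-n-\nu$, not the shift $\nu \mapsto \nu + (n-2)$. Your assertion that ``the shift $\nu'-\nu = n-2$ is exactly what is needed for the pairing $\xi \mapsto \int_\Gamma \zeta\,\xi\,dV_{\mathring g}$ to extend continuously to $\mathcal C^{0,\alpha}_{\nu'-2}$'' is therefore wrong as stated: with $|\zeta| \lesssim d_\Gamma^{\,\nu}$, $|\xi| \lesssim d_\Gamma^{\,\nu'-2}$ and $dV_{\mathring g} \sim d_\Gamma^{\,n-1}\,d(d_\Gamma)\,dV_{\bar g}$ on the conical end, the bulk integral behaves like $\int^\infty r^{\nu+\nu'+n-3}\,dr$ and the boundary term in Green's identity on $\{d_\Gamma = R\}$ like $R^{\nu+\nu'+n-2}$, so the critical (borderline) relation is $\nu+\nu' = 2-n$, not $\nu'-\nu = n-2$. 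With the formula you took at face value the pairing is not even defined for generic $\nu$. Once the duality weight is corrected to $\nu' = 2-n-\nu$, your sketch is exactly the argument found in the cited references, and the remaining technical points you flag (closedness of the range, matching asymptotic expansions, completeness of the pairing) are handled there.
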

This proposition is by now standard. We refer to \cite{Caf-Har-Sim}, \cite{Cha}, \cite{Har-Sim} and also to \cite{Pac} for a proof.
 
In the cases of interest, namely, when the minimal hypersurface $\Gamma$ is asymptotic to a minimizing cone, we have the~:
\begin{lemma}
Assume that $\Gamma$ is a minimal surface which is asymptotic and lies on one side of a minimizing cone $C$. Then $J_\Gamma$ is injective in $\mathcal C^{2, \alpha}_\nu (\Gamma)$ for all $\nu < \nu_0^-$. If in addition, the cone is strictly area minimizing then the operator is injective for all $\nu < \nu_0^+$. 
\label{le:1010}
\end{lemma}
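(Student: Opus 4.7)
The plan is to use the dilation foliation of $\mathbb R^{n+1}_\pm \setminus C$ by $\{\lambda \Gamma\}_{\lambda > 0}$ to produce a \emph{positive} Jacobi field $\Phi$ on $\Gamma$, determine its precise decay rate at infinity, and then conclude via the strong maximum principle applied to the operator obtained from $J_\Gamma$ by conjugation with $\Phi$. Since every $\lambda\Gamma$ is minimal (dilations preserve minimality), differentiating the variation $y \mapsto \lambda y$ at $\lambda = 1$ produces a Jacobi field: the normal component of the position vector,
\[
\Phi(y) := y \cdot N(y), \qquad y \in \Gamma,
\]
satisfies $J_\Gamma \Phi = 0$. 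Property (iv) of Theorem~\ref{th:4.1} asserts that the sets $\lambda \Gamma$ foliate $\mathbb R^{n+1}_\pm \setminus C$ without overlap, which forces $\Phi$ to have no zero on $\Gamma$; after choosing the sign of $N$, we may take $\Phi > 0$ everywhere.

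Next I would determine the asymptotic behavior of $\Phi$. Writing $\Gamma$ at infinity as a normal graph $y = y_C + v(y_C)\, N_C(y_C)$ over $C$, and using the crucial identity $y_C \cdot N_C \equiv 0$ on a cone through the origin, a direct expansion gives $\Phi(y) = v(y_C)\, (1 + o(1))$ as $d_\Gamma(y) \to \infty$. Theorem~\ref{th:4.2} then yields, in the minimizing case, constants $c_1, c_2 > 0$ such that
\[
c_1\, d_\Gamma(y)^{\nu_0^\sigma}\, \varphi_0 \,\leq\, \Phi(y) \,\leq\, c_2 \, d_\Gamma(y)^{\nu_0^\sigma}\, \varphi_0
\]
for some $\sigma \in \{+,-\}$, while in the strictly area minimizing case the Proposition of Section~5 forces $\sigma = +$, so $\Phi \asymp d_\Gamma^{\nu_0^+}$. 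Interior Schauder estimates applied to $J_\Gamma \Phi = 0$ on geodesic balls of radius $\sim d_\Gamma(y)$ (on which the coefficients of $J_\Gamma$ are of order $d_\Gamma^{-2}$) provide the matching gradient bound $|\nabla_\Gamma \Phi|(y) \leq C \, d_\Gamma(y)^{\nu_0^\sigma - 1}$.

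Now fix $\zeta \in \mathcal C^{2,\alpha}_\nu(\Gamma)$ with $J_\Gamma \zeta = 0$ and set $\psi := \zeta/\Phi$, which is smooth because $\Phi > 0$. A direct computation using $J_\Gamma \Phi = 0$ shows that
\[
0 = J_\Gamma(\Phi \psi) = \Phi\, \Bigl( \Delta_{\mathring g} \psi + 2 \, \bigl\langle \nabla \log \Phi,\, \nabla \psi\bigr\rangle_{\mathring g}\Bigr),
\]
so $\psi$ solves the strictly elliptic equation $\mathcal L \psi := \Delta_{\mathring g}\psi + 2\, \nabla \log \Phi \cdot \nabla \psi = 0$, which has \emph{no zero-order term}. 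The bound $|\zeta(y)| \leq C\, d_\Gamma(y)^\nu$ combined with the lower bound on $\Phi$ gives $|\psi(y)| \leq C\, d_\Gamma(y)^{\nu - \nu_0^\sigma}$. In the minimizing case, the hypothesis $\nu < \nu_0^-$ together with $\nu_0^- \leq \nu_0^+$ forces $\nu - \nu_0^\sigma < 0$ for both possible $\sigma$; in the strictly area minimizing case, $\nu < \nu_0^+ = \nu_0^\sigma$ does the same; hence $\psi \to 0$ at infinity. The strong maximum principle applied to $\mathcal L$ then finishes the proof: if $\psi \not\equiv 0$, then $|\psi|$ attains a positive maximum at some interior point of $\Gamma$ (by continuity and decay at infinity), and the strong maximum principle for $\mathcal L$ forces $\psi$ to be a nonzero constant, contradicting $\psi \to 0$. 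Therefore $\zeta \equiv 0$.

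The principal technical obstacle is the second step: pinning down the asymptotic rate of $\Phi = y\cdot N$ and matching it to the characteristic exponent $\nu_0^\sigma$ that governs the graph function $v$ in the asymptotic description of $\Gamma$. This requires disentangling the main contribution of the displacement $v$ from the lower-order effect of the deformation of $N$ relative to $N_C$, and treating carefully the borderline case $\gamma_0^+ = \gamma_0^- = (2-n)/2$ of Theorem~\ref{th:4.2}, in which the expansion of $v$ carries an extra logarithmic factor $(bt+a)$.
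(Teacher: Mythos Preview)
Your proof is correct and follows essentially the same route as the paper's: both use the dilation Jacobi field $\zeta_0(y)=y\cdot N(y)$, invoke property~(iv) of Theorem~\ref{th:4.1} to get its strict sign, read off its asymptotic rate from Theorem~\ref{th:4.2} (and from the Proposition of Section~5 in the strictly area minimizing case), and conclude by a maximum-principle argument. The paper compresses the last step into the phrase ``used as a barrier''; your conjugation $\psi=\zeta/\Phi$, which kills the zero-order term and reduces to the strong maximum principle for $\Delta_{\mathring g}+2\nabla\log\Phi\cdot\nabla$, is exactly the standard way to make that barrier argument precise.
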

\begin{proof}
Just use the result of Theorem~\ref{th:4.1} together with the fact that that 
\[
\zeta_0 (y) : =  y \cdot N(y) ,
\]
is a Jacobi field (and hence solves $J_\Gamma \, \zeta_0 =0$) which is associated to the fact that the minimal surface equation is invariant under dilations. Observe that, thanks to (iv) in Theorem~\ref{th:4.1}), the function $\zeta_0$ does not change sign. According to the result of Theorem~\ref{th:4.2}, this Jacobi field does not belong to $\mathcal C^{2, \alpha}_{\nu} (\Gamma)$ for $\nu < \nu^-_0$, when the cone $C$ is minimizing and this Jacobi field does not belong to $\mathcal C^{2, \alpha}_{\nu} (\Gamma)$ for $\nu < \nu^+_0$ when the cone $C$ is strictly area minimizing. 

Since this Jacobi field does not change sign,  it can be used as a barrier to prove injectivity in the corresponding spaces. 
\end{proof}

\section{The nonlinear scheme}

We describe in this section the nonlinear scheme we are going to use to perturb an infinite dimensional family of approximate solutions into a genuine solution of (\ref{ac}). First, we define some cutoff functions which are used both in the definition of the approximate solutions and in the nonlinear scheme. Next, we define an infinite dimensional family of diffeomorphisms  which are used to construct the approximate solutions. Finally, we explain the nonlinear scheme we use. The last section is concerned with the solvability of the nonlinear problem and builds upon all the analysis we have done so far. 

From now on, we assume that $C$ is a minimizing cone and that the indicial root $\nu_0^+ <0$. We define $\Gamma$ to be the minimal hypersurface which is described in Theorem~\ref{th:4.2}. In particular, $\Gamma$ lies on one side of $C$ and Lemma~\ref{le:1010} applies. 

\subsection{\bf Some useful cutoff functions} We will need various cutoff functions in our construction. Therefore, for $j= 1, \ldots, 5$, we define the cut-off function $\chi_j$ by
\[
Z^* \chi_j (y,t) : =  
\left\{
\begin{array}{llll}
1 \qquad \mbox{when} \qquad |t|Ê\leq \varepsilon^{\delta_*} \, \left( d_\Gamma (y) - \frac{2j-1}{100} \right) \\[3mm] 
0 \qquad \mbox{when} \qquad |t|Ê\geq \varepsilon^{\delta_*} \, \left( d_\Gamma (y) - \frac{2j-2}{100}  \right),
\end{array}
\right.
\]
where $\delta_* \in (0, 1)$ is fixed. When $\varepsilon$  is chosen small enough, $Z$ is a diffeomorphism from the set $\{ (y,t) \in \Gamma \times \mathbb R \, : \,  |t|Ê\leq \varepsilon^{\delta_*} \, d_{\Gamma}\}$ onto its image.  We define $\Omega_j$ to be the support of $\chi_j$. By construction, $\Omega_j$ is included in the set of points where $\chi_{j-1}$ is identically equal to $1$ and the distance from $\partial \Omega_j$ to $\partial \Omega_{j-1}$ is larger than or equal to $ \varepsilon^{\delta_*}/100$.

Without loss of generality, we can assume that, for all $k \geq 1$
\[
\| \nabla^k \chi_j \|_{L^\infty (\mathbb R^{n+1})} \leq C \, \varepsilon^{-k \delta_*},
\]
for some constant $C_k >0$ only depending on $k$. 
 
\subsection{\bf A one parameter family of approximate solutions} Building on the analysis we have done in section~\ref{se:7}, the approximate solution $\tilde u_\varepsilon$ is defined by
\[
\tilde u_\varepsilon  : =  \chi_1  \, \bar u_\varepsilon \pm  (1 - \chi_1) ,
\]
where $\pm$ corresponds to whether the point belongs to $\mathbb R^{n+1}_\pm$.  Here the function $\bar u_\varepsilon$ is the one defined in (\ref{eqves}), namely
\[
Z^* \bar u_\varepsilon (y,t) : =  u_\varepsilon (t) .
\]
Observe that $\bar u_\varepsilon$ is exponentially close to $\pm 1$ at infinity and hence, it is reasonable to graft it to the constant functions $\pm 1$ away from some neighborhood of $\Gamma$. 

\subsection{\bf An infinite dimensional family of diffeomorphisms} Given a function $\zeta \in \mathcal C^{2, \alpha}( \Gamma)$, we define a  diffeomorphism $D_\zeta$ of $\mathbb R^{n+1}$ as follows. 
 \[
Z^* D_\zeta (y,t) = Z (y ,t -  \chi_2 (y, t) \, \zeta (y)),
\]
in $\Omega_2$ and 
\[
D_\zeta = {\rm Id} ,
\]
in $\mathbb R^{n+1} \setminus \Omega_2$. It is easy to check that this is a diffeomorphism of $\mathbb R^{n+1}$ provided the norm of $\zeta$ is small. 

Also, observe that, in $\Omega_2$, the inverse of $D_\zeta$ can be written as 
\[
Z^* D_\zeta^{-1} (y,t) = Z (y , t  +  \chi_2 (y, t) \, \zeta (y) + \eta (y,t, \zeta (y)) \, \zeta(y)^2 ), 
\]
where $(y,t, z ) \longmapsto \eta (y,t,z)$ is a smooth function defined for $z$ small (this follows at once from the inverse function theorem applied to the function 
\[
t  \longmapsto  t -  \chi_2 (y, t)  \, z ,
\]
details are left to the reader).

\subsection{\bf Rewriting the equation}  
Given a function $\zeta \in \mathcal C^{2, \alpha} (\Gamma)$, small enough, we use the diffeomorphism $D_\zeta$, we write $u = \bar u \circ D_\zeta$ so that the equation  
\[
\varepsilon^2 \, \Delta u + u -u^3 =0 ,
\]
can be rewritten as 
\begin{equation}
\varepsilon^2 \, (\Delta \bar u \circ D_\zeta ) \circ D_\zeta^{-1} + \bar u - \bar u^3 = 0 .
\label{neweq}
\end{equation}
Observe that, when $\chi_2 \equiv 1$, the diffeomorphism $D_\zeta$ is just given by  $Z^* D_\zeta (y,t) = Z(y, t-\zeta(y))$ and, as a consequence, in the coordinates $(y,t)$ this equation is precisely the one given in (\ref{eq:rfe}). Also observe that this equation is nonlinear in $\zeta$ and its partial derivatives, this is clear from (\ref{eq:rfe}). But, and this is a key point, since we have composed the whole equation with $D^{-1}_\zeta$, the function $\zeta$ does not appear anymore composed with the function $\bar u$. This property is due to the special structure of the diffeomorphism $D_\zeta$ and hence to the special structure of $D_\zeta^{-1}$. 

Now, we look for a solution of (\ref{neweq}) as a perturbation of  $\tilde u_\varepsilon$, and hence, we define  
\[
\bar u : = \tilde u_\varepsilon + v ,
\]
so that the equation we need to solve can now be written as 
\begin{equation}
\varepsilon^2 \, (\Delta v \circ D_\zeta ) \circ D_\zeta^{-1} + v - 3 \, \tilde u_\varepsilon^2 \, v +  E_\varepsilon  (\zeta) + Q_\varepsilon (v)  =0 ,
\label{lasteq}
\end{equation}
where 
\[
E_\varepsilon (\zeta) : = \varepsilon^2  (\Delta \tilde u_\varepsilon \circ D_\zeta ) \circ D_\zeta^{-1} + \tilde u_\varepsilon - \tilde u_\varepsilon^3,
\]
is the error corresponding to the fact that $\bar u_\varepsilon$ is an approximate solution and 
\[
Q_\varepsilon (v) : = v^3 + 3 \, \tilde u_\varepsilon \, v^2 ,
\]  
collects the nonlinear terms in $v$. Again, when $\chi_2 \equiv 1$, ${\tilde u}_\varepsilon = \bar u_\varepsilon$ and, in the coordinates $(y,t)$, the equation (\ref{lasteq}) is nothing but $\mathfrak N (v , \zeta )=0$ where the nonlinear operator $\mathfrak N$ has been defined in (\ref{eq:nrfe}).

Finally, in order to solve (\ref{lasteq}), we use a very nice trick which was already used in \cite{dkwdg}. This trick amounts to decompose the function $v$ into two functions, one of which $  \chi_4 \, v^\sharp$ is supported in a tubular neighborhood on $\Gamma$ and the other one $v^\flat$ being globally defined in $\mathbb R^{n+1}$, instead of solving (\ref{lasteq}), one solves a coupled system of equation. One of the equation involves the operator $L_\varepsilon$ acting on $v^\sharp$ and the operator $J_\Gamma$ acting on $\zeta$ while the other equation involves the operator $(\varepsilon^2 \, \Delta -2)$ actin of $v^\flat$. At first glance this might look rather counterintuitive but, as we will see, this strategy allows one to use directly the linear results we have proven in the previous sections. 

Therefore, we decompose
\[
v : =  \chi_4 \, v^\sharp + v^\flat ,
\]
where the function $v^\flat$ solves 
\[
\begin{array}{rllll}
\mathcal L_\varepsilon \, v^\flat & = &- (1- \chi_4) \,  \Big [ \varepsilon^2 \, \left( \Delta (v^\flat \circ D_\zeta) \circ D_\zeta^{-1}  - \Delta v^\flat \right) \\[3mm] 
& + &  3 \, (\tilde u_\varepsilon^2 -1) \, v^\flat  - E_\varepsilon (\zeta) - Q_\varepsilon (\chi_4 \, v^\sharp + v^\flat) \Big] \\[3mm]
& - & \varepsilon^2 \, \left( (\Delta ( (\chi_4 \, v^\sharp) \circ D_\zeta ) - \chi_4 \Delta  (v^\sharp \circ D_\zeta)  \right ) \circ D_\zeta^{-1} , 
\end{array}
\]
where $\mathcal L_\varepsilon$ has been defined in (\ref{eq:seo}).

For short, the right hand side will be denoted by $N_\varepsilon ( v^\flat, v^\sharp, \zeta  )$ so that this equation reads
\begin{equation}
\mathcal L_\varepsilon \, v^\flat = N_\varepsilon ( v^\flat, v^\sharp, \zeta  ) .
\label{eq:1}
\end{equation}
Observe that the right hand side of this equation vanishes in $\Omega_4$.

\begin{remark}
We know from Proposition~\ref{pr:10.91} that if 
\[
(\varepsilon^2 \, \Delta - 2 ) \, w =  f ,
\]
then 
\begin{equation}
\| w \|_{\mathcal C^{2, \alpha}_{\varepsilon, \nu} (\mathbb R^{n+1})} \leq C \, \| f\|_{\mathcal C^{0, \alpha}_{\varepsilon, \nu} (\mathbb R^{n+1})} .
\label{eq:espa}
\end{equation}
In the case where $f \equiv 0$ in $\Omega_4$, we can be more precise and we can show that the estimate for $w$ can be improved in $\Omega_5$. Indeed, we claim that we have
\[
\| \chi_5 \, w \|_{{\mathcal C^{2, \alpha}_{\varepsilon, \nu} (\mathbb R^{n+1})}} \leq C \, \varepsilon^2 \,  \| f\|_{\mathcal C^{0, \alpha}_{\varepsilon, \nu} (\mathbb R^{n+1})} ,
\]
provided $\varepsilon$ is small enough (as we will see the $\varepsilon^2$ can be replaced by any power of $\varepsilon$). Starting from (\ref{eq:espa}), his estimate follows easily from the construction of suitable barrier functions for the $\varepsilon^2 \, \Delta -2$. Indeed,  given a point $x^0 \in \mathbb R^{n+1}$, we can use 
\[
x : = (x_1, \ldots, x_{n+1}) \longmapsto \sum_{i=1}^{n+1} \cosh \left( \sqrt 2 \, \frac{(x_i -x_i^0)}{\varepsilon}Ê\right) ,
\]  
as a barrier in $\Omega_4$, to estimate $w$ at any point $x^0 \in  \Omega_5$ in terms of the estimate of $w$ on the boundary of ball of radius $\varepsilon^{\delta_*}/100$ centered at $x^0$. Performing this analysis at any point of $\Omega_5$, we conclude that 
\[
\| (1+ |x|^2)^{-\nu/2} \, w \|_{L^\infty (\Omega_5)} \leq ÊC \, \, e^{-  c^*Ê\,Ê\varepsilon^{\delta_*-1} } \, \| (1 + |x|^2)^{-\nu/2} \, w \|_{L^\infty (\Omega_4)} ,
\]  
where $c^*  : = \, \sqrt 2/100$. As usual, once the estimate for the $L^\infty$ norm has been derived, the estimates for the derivatives follow at once from Schauder's estimates. 

We can summarize this discussion by saying that, if $f\equiv 0$ in $\Omega_4$, then (\ref{eq:espa}) can be improved into 
\begin{equation}
\| w \|_{\tilde{\mathcal C}^{2, \alpha}_{\varepsilon, \nu} (\mathbb R^{n+1})} \leq C \, \| f\|_{\mathcal C^{0, \alpha}_{\varepsilon, \nu} (\mathbb R^{n+1})} ,
\label{eq:newest}
\end{equation}
where, by definition
\[
\|Êv  \|_{\tilde{\mathcal C}_{\varepsilon, \nu}^{2, \alpha} (\mathbb R^{n+1})}  : = \varepsilon^{-2} \, \|Ê\chi_5 \, v  \|_{\mathcal C_{\varepsilon, \nu}^{2, \alpha} (\mathbb R^{n+1})}   + \|Êv \|_{\mathcal C_{\varepsilon, \nu}^{2, \alpha} (\mathbb R^{n+1})} .
\]
\label{re:11.1}
\end{remark}

Taking the difference between the equation satisfied by $v$ and the equation satisfied by $v^\flat$, we find that it is enough that  $v^\sharp$ solves, 
\[
\begin{array}{rllll}
\varepsilon^2 \, \Delta (v^\sharp  \circ D_\zeta) \circ D_\zeta^{-1}  + v^\sharp - 3 \, \tilde u_\varepsilon^2 \, v^\sharp & = &  - E_\varepsilon  (\zeta) - Q_\varepsilon (\chi_4 \, v^\sharp + v^\flat) +  3 \, ( \tilde u_\varepsilon^2 - 1) \, v^\flat \\[3mm]
& - &  \varepsilon^2 \, \left( \Delta (v^\flat \circ D_\zeta) \circ D_\zeta^{-1}  - \Delta v^\flat \right)  ,
\end{array}
\]
in the support of $\chi_4$.  Since we only need this equation to be satisfied on the support of $\chi_4$, we can as well solve the equation 
\begin{equation}
\begin{array}{rllll}
L_\varepsilon v^\sharp - \varepsilon  \, J_\Gamma \, \zeta \, \dot u_\varepsilon & = & \displaystyle \chi_3 \, \Big[ L_\varepsilon \,  v^\sharp - \varepsilon^2 \,   \left( \Delta (v^\sharp  \circ D_\zeta) \circ D_\zeta^{-1} -  v^\sharp + 3 \, \bar u_\varepsilon^2 \, v^\sharp  \right)  \\[3mm]
& - & \varepsilon^2 \, \left( \Delta (v^\flat \circ D_\zeta) \circ D_\zeta^{-1}  - \Delta v^\flat \right)   \\[3mm]
& - & E_\varepsilon  (\zeta) - \varepsilon  \, J_\Gamma \, \zeta \, \dot u_\varepsilon - Q_\varepsilon (\chi_4 \, v^\sharp + v^\flat) +  3 \, ( \bar u_\varepsilon^2 - 1) \, v^\flat  \Big],
\end{array}
\label{eq:3}
\end{equation}
where the operator $L_\varepsilon$ is the one defined in (\ref{eq:12-1}). Here we have implicitly used the fact that $\tilde u_\varepsilon = \chi_3 \, \bar u_\varepsilon $ on the support of $\chi_3$.  For short, the right hand side will be denoted by $M_\varepsilon ( v^\flat, v^\sharp, \zeta  )$ so that this equation reads
\begin{equation}
\varepsilon^2 \, L_\varepsilon v^\sharp - \varepsilon  \, J_\Gamma \, \zeta \, \dot u_\varepsilon   = M_\varepsilon ( v^\flat, v^\sharp, \zeta  ) .
\label{eq:1}
\end{equation}

This last equation is now  projected over the space of functions satisfying (\ref{eq:12-10}) and the space of functions of the form $\dot u_\varepsilon$ times a function defined on $\Gamma$. Recall that we have defined $\Pi$ to be the orthogonal projection on $\dot u_\varepsilon$, namely
\[
\Pi (f)  : =  \frac{1}{\varepsilon \, c} \, \int_{\mathbb R}  f(y,t) \, \dot u_\varepsilon  (t) \, dt ,
\]
where the constant $c >0$ is explicitly given by
\[
c : =\frac{1}{\varepsilon} \,  \int_{\mathbb R}Ê\dot u_\varepsilon^2 (t) \, dt =  \int_{\mathbb R}Ê(u_1')^2 (t) \, dt ,
\]
and by $\Pi^\perp$ the orthogonal projection on the orthogonal of $\dot u_\varepsilon$, namely
\[ 
\Pi^\perp (f) (y,t) : =  f - \Pi (f) \, \dot u_\varepsilon (t) .
\]  
 
If we further assume that $v^\sharp$ satisfies (\ref{eq:12-10}), then (\ref{eq:3}) is equivalent to the system
\begin{equation}
\begin{array}{rllll}
L_\varepsilon v^\sharp & = & \Pi^\perp \, \Big[M_\varepsilon ( v^\flat, v^\sharp, \zeta  )  \Big],
\end{array}
\label{eq:3}
\end{equation}
and
\begin{equation}
\begin{array}{rllll}
- \varepsilon  \, J_\Gamma \, \zeta & = & \Pi \, \Big[M_\varepsilon ( v^\flat, v^\sharp, \zeta  )  \Big].
\end{array}
\label{eq:3}
\end{equation}

\subsection{The existence of a solution}
\label{se:e10}
We summarize the above discussion. We are looking for a solution of 
\begin{equation}
\varepsilon^2 \, \Delta u + u-u^3 =0 ,
\label{eq:qs}
\end{equation}
of the form
\[
u = \left( \tilde u_\varepsilon + \chi_4 \, v^\sharp + v^\flat \right) \circ D_\zeta ,
\]
where the function $v^\sharp$ is defined on $\Gamma \times \mathbb R$, the function $v^\flat$ is defined in $\mathbb R^{n+1}$ and the function $\zeta$ is defined on $\Gamma$. In this case, (\ref{eq:qs}) is equivalent to the solvability of the coupled system
\begin{equation}
\left\{
\begin{array}{rllll}
\mathcal L_\varepsilon\, v^\flat  & = & N_\varepsilon ( v^\flat, v^\sharp, \zeta ) \\[3mm]
L_\varepsilon v^\sharp & = & \Pi^\perp \, \Big[M_\varepsilon ( v^\flat, v^\sharp, \zeta  )  \Big] \\[3mm]
- \varepsilon  \, J_\Gamma \, \zeta & = & \Pi \, \Big[M_\varepsilon ( v^\flat, v^\sharp, \zeta  )  \Big].
\end{array}
\right.
\label{eq:1111}
\end{equation}

Closer inspection of the construction of the approximate solution shows that~: 
\begin{lemma}
The following estimates hold
\[
\|ÊN_\varepsilon  (0,0,0) \|_{\mathcal C_{\varepsilon, -2}^{0, \alpha} (\mathbb R^{n+1})} + \| \Pi^\perp \, ( M_\varepsilon (0,0,0) )  \|_{\mathcal C^{0, \alpha}_{\varepsilon, -2} (\Gamma \times \mathbb R)} \leq  C \, \varepsilon^2  .
\]
Moreover, given $\nu \geq -1$, we have
\[
\| \Pi \, ( M_\varepsilon (0,0,0) )  \|_{\mathcal C^{0, \alpha}_{\nu-2} (\Gamma)} \leq  C \, \varepsilon^3 .
\]
\label{le:leend}
\end{lemma}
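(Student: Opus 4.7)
The plan is to evaluate $N_\varepsilon(0,0,0)$ and $M_\varepsilon(0,0,0)$ directly from their definitions and reduce each of the three claimed estimates to material already proved in Section~\ref{se:7}. Substituting $v^\flat = v^\sharp = 0$ and $\zeta = 0$ makes $D_0 = \mathrm{Id}$ and kills every term of the definitions that contains a derivative of one of these variables (together with $Q_\varepsilon(0)=0$), so that the formulas reduce to
\[
N_\varepsilon(0,0,0) = (1-\chi_4)\, E_\varepsilon(0), \qquad M_\varepsilon(0,0,0) = -\chi_3 \, E_\varepsilon(0),
\]
where $E_\varepsilon(0) = \varepsilon^2 \Delta \tilde u_\varepsilon + \tilde u_\varepsilon - \tilde u_\varepsilon^3$. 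Since the support of $\chi_3$ is contained in $\{\chi_1=1\}$, on the support of $\chi_3$ we have $\tilde u_\varepsilon = \bar u_\varepsilon$; combining the expression (\ref{Lapx}) of the Laplacian in Fermi coordinates with the ODE satisfied by $u_\varepsilon$ then gives $Z^* E_\varepsilon(0)(y,t) = -\varepsilon H_t\, \dot u_\varepsilon(t) = \mathfrak N(0,0)(y,t)$.

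The estimate on $N_\varepsilon(0,0,0)$ is entirely driven by exponential decay. The factor $(1-\chi_4)$ vanishes on $\Omega_4$, and outside $\Omega_4$ the function $\tilde u_\varepsilon$ is either identically $\pm 1$ (so $E_\varepsilon(0)\equiv 0$) or one is in a Fermi-coordinate region in which $|t| \geq c\, \varepsilon^{\delta_*} d_\Gamma$. In the latter case only factors containing $\dot u_\varepsilon(t) = u_1'(t/\varepsilon)$, or containing $(\bar u_\varepsilon\mp 1)$ multiplied by derivatives of $\chi_1$, survive, and both of these factors decay like $e^{-\sqrt 2 |t|/\varepsilon}$. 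The polynomial losses $\varepsilon^{-k\delta_*}$ coming from derivatives of $\chi_1$ are comfortably absorbed, and the lower bound $|t|/\varepsilon \geq c\,\varepsilon^{\delta_*-1} d_\Gamma$ then forces $N_\varepsilon(0,0,0)$ to be bounded uniformly by $\exp(-c\,\varepsilon^{\delta_*-1} d_\Gamma)$ times a polynomial weight, which is far smaller than the required $C \varepsilon^2(1+|x|^2)^{-1}$. The H\"older component of the $\mathcal C^{0,\alpha}_{\varepsilon,-2}$ norm is handled by the same argument after rescaling to balls of radius one in the metric $g_\varepsilon$.

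For $M_\varepsilon(0,0,0) = -\chi_3\, \mathfrak N(0,0)$, Lemma~\ref{le:7.1} applied with $k,k'\in\{0,1\}$ provides pointwise bounds $|\nabla^{k'}\partial_t^k \mathfrak N(0,0)|_{\mathring g} \leq C\, \varepsilon^{2-k} d_\Gamma^{-2-k'}$. After converting to derivatives in the scaled metric $g_\varepsilon$ (each $t$-derivative gains an $\varepsilon$ that exactly cancels the $\varepsilon^{-1}$ loss in the pointwise bound), this yields the $\mathcal C^{0,\alpha}_{\varepsilon,-2}(\Gamma\times\mathbb R)$ bound of order $\varepsilon^2$ and hence the estimate on $\Pi^\perp(M_\varepsilon(0,0,0))$. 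The main point of the lemma, and the step I regard as the substantive one, is the gain of a further power of $\varepsilon$ on the projected part: for this I would invoke Lemma~\ref{le:7.2}, whose proof rests on the parity identity (\ref{eq:parity}) which annihilates the leading linear-in-$t$ term of $H_t$ upon integration against the even function $\dot u_\varepsilon^2$. Combined with the weight computation $d_\Gamma^{2-\nu}\cdot \varepsilon^3 d_\Gamma^{-3} = \varepsilon^3 d_\Gamma^{-1-\nu}$, this is uniformly bounded by $C\varepsilon^3$ precisely under the assumption $\nu \geq -1$, which both explains and justifies that hypothesis.
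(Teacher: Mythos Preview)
Your proposal is correct and follows essentially the same approach as the paper: reduce $N_\varepsilon(0,0,0)$ and $M_\varepsilon(0,0,0)$ to $(1-\chi_4)E_\varepsilon(0)$ and $-\chi_3 E_\varepsilon(0)$, identify $E_\varepsilon(0)=-\varepsilon H_t\,\dot u_\varepsilon=\mathfrak N(0,0)$ where $\chi_1\equiv 1$, and invoke Lemma~\ref{le:7.1} and Lemma~\ref{le:7.2}. You supply considerably more detail than the paper's terse proof---in particular the exponential-decay argument for $N_\varepsilon(0,0,0)$ on the support of $1-\chi_4$ and the explanation of why the hypothesis $\nu\geq -1$ is exactly what is needed---but the underlying strategy is identical.
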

\begin{proof}
Since $v^\sharp =0$, $v^\flat =0$ and $\zeta =0$, the estimate follow from the understanding of 
\[
E_\varepsilon (0) = \varepsilon^2  \, \Delta \tilde u_\varepsilon + \tilde u_\varepsilon - \tilde u_\varepsilon^3 .
\]
But, in the range where $\chi_1 \equiv 1$, we have already seen that 
\[
\varepsilon^2  \, \Delta  u_\varepsilon +  u_\varepsilon - u_\varepsilon^3  =  -  \varepsilon \, H_t  \,   \dot u_\varepsilon ,
\]
and the estimates then follow at once from (\ref{eq:7.21}) in Lemma~\ref{le:7.1} and (\ref{eq:7.22}) in Lemma~\ref{le:7.2}.
\end{proof}

We also need the 
\begin{lemma}
Assume that $\nu \in  [-1, 0)$. Then, there exists $\delta >0$ (independent of $\alpha \in (0,1)$) such that the following estimates hold
\[
\begin{array}{lll}
\|  N _\varepsilon (v^\flat_2, v^\sharp_2, \zeta_2 )  -  N_\varepsilon (v^\flat_1 , v^\sharp_1 , \zeta_1 ) \|_{\mathcal C^{0, \alpha}_{\varepsilon, -2} (\mathbb R^{n+1})}   \\[3mm]
\qquad \leq C \, \varepsilon^{\delta} \, \left( \| v^\flat_2 - v^\flat_1\|_{{\mathcal C}^{2, \alpha}_{\varepsilon, -2} (\mathbb R^{n+1})} + \| v^\sharp_2 - v^\sharp_1\|_{\mathcal C^{2, \alpha}_{\varepsilon, -2} (\Gamma \times \mathbb R)} +  \| \zeta_2 - \zeta_1\|_{\mathcal C^{2, \alpha}_{\nu} (\Gamma)} \right) 
\end{array}
\]
\[
\begin{array}{lll}
\| \Pi^\perp \, ( M_\varepsilon (v^\flat_2, v^\sharp_2, \zeta_2 ) -  M_\varepsilon (v^\flat_1, v^\sharp_1, \zeta_1 ))  \|_{\mathcal C^{0, \alpha}_{\varepsilon, -2} (\Gamma \times \mathbb R)}  \\[3mm]
\qquad \leq C \, \varepsilon^{\delta} \, \left( \| v^\flat_2 - v^\flat_1\|_{\tilde{\mathcal C}^{2, \alpha}_{\varepsilon, -2} (\mathbb R^{n+1})} + \| v^\sharp_2 - v^\sharp_1\|_{\mathcal C^{2, \alpha}_{\varepsilon, -2} (\Gamma \times \mathbb R)} +  \| \zeta_2 - \zeta_1\|_{\mathcal C^{2, \alpha}_{\nu} (\Gamma)} \right) 
\end{array}
\]
and
\[
\begin{array}{lll}
\|Ê\Pi \, ( M_\varepsilon (v^\flat_2, v^\sharp_2, \zeta_2 ) - M_\varepsilon (v^\flat_1, v^\sharp_1, \zeta_1 ) ) \|_{\mathcal C^{0, \alpha}_{\nu-2} (\Gamma)}  \\[3mm]
\qquad \leq C  \, \varepsilon^{1-\alpha} \, \| v^\sharp_2 - v^\sharp_1\|_{\mathcal C^{2, \alpha}_{\varepsilon, -2} (\Gamma \times \mathbb R)} + C \, \varepsilon^{1+\delta} \, \left( \| v^\flat_2 - v^\flat_1\|_{\tilde{\mathcal C}^{2, \alpha}_{\varepsilon, -2} (\mathbb R^{n+1})}  +  \| \zeta_2 - \zeta_1\|_{\mathcal C^{2, \alpha}_{\nu} (\Gamma)} \right) .
\end{array}
\]
\end{lemma}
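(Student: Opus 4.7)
Throughout I work with $(v^\flat_i, v^\sharp_i, \zeta_i)$ lying in small balls of the respective Banach spaces, as required by the fixed-point scheme. The plan is a systematic term-by-term estimation of the three differences. Since $v^\flat$, $v^\sharp$ and $\zeta$ enter $N_\varepsilon$ and $M_\varepsilon$ in only three ways --- linearly (through the commutators $\Delta(\cdot \circ D_\zeta) \circ D_\zeta^{-1} - \Delta$ and through the potential $\tilde u_\varepsilon^2 - 1$), through the cubic nonlinearity $Q_\varepsilon$, or through $\zeta$ in $D_\zeta^{\pm 1}$ and $E_\varepsilon(\zeta)$ --- each difference naturally splits into three kinds of contributions. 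Linear-in-$(v^\flat, v^\sharp)$ pieces are handled by direct weighted elliptic estimates; the nonlinear piece is controlled by the factorisation $Q_\varepsilon(a) - Q_\varepsilon(b) = (a-b)(a^2 + ab + b^2 + 3 \tilde u_\varepsilon (a+b))$, which brings in a factor equal to the (small) size of the inputs; and the $\zeta$-contributions are handled by a first-order Taylor expansion of $D_\zeta^{\pm 1}$ and $E_\varepsilon(\zeta)$ around $\zeta = 0$, using the explicit expression of $D_\zeta^{-1}$ given after the definition of $D_\zeta$.

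For the first estimate, the decisive observation is that the expression defining $N_\varepsilon$ is supported where $(1-\chi_4) \neq 0$ or where $\nabla \chi_4$ lives, i.e.\ on $|t| \geq \varepsilon^{\delta_*}(d_\Gamma - 7/100)$. In this outer region, $\tilde u_\varepsilon^2 - 1 = 1 - \tanh^2(t/(\sqrt{2}\,\varepsilon))$ is of order $e^{-c\,\varepsilon^{\delta_*-1} d_\Gamma}$, which is super-polynomially small in $\varepsilon$; this absorbs all polynomial losses coming from the weights $d_\Gamma^\nu$, from the scale-$\varepsilon$ H\"older norms, and from the bound $|\nabla^k \chi_j| \leq C \varepsilon^{-k\delta_*}$ on the cutoffs. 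Combined with the explicit $\varepsilon^2$ prefactor in front of the commutator term $\varepsilon^2 ((\Delta((\chi_4 v^\sharp) \circ D_\zeta) - \chi_4 \Delta(v^\sharp \circ D_\zeta))\circ D_\zeta^{-1})$, this produces the Lipschitz constant $C \varepsilon^\delta$.

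For the second estimate, the difference $L_\varepsilon w - [\varepsilon^2 \Delta(w \circ D_\zeta)\circ D_\zeta^{-1} - w + 3 \bar u_\varepsilon^2 w]$ for $w = v^\sharp$ is computed using the Fermi expansion of the Laplacian (\ref{Lapx}) together with Lemma~\ref{le:2.1}: it reduces to $\varepsilon^2$ times a first-order operator on $\Gamma \times \mathbb R$ whose coefficients are smooth functions of $\zeta$, $d\zeta$, and of the geometric quantities $H_t$, $\mathring h$, $\mathring h \otimes \mathring h$. Combining the pointwise curvature bounds (\ref{eq:7-20}) with the $\chi_3$-truncation, the Lipschitz structure of $Q_\varepsilon$, and the exponential decay of $\bar u_\varepsilon^2 - 1$ where $\chi_3 \neq 1$, each contribution again yields a Lipschitz constant of the form $C \varepsilon^\delta$.

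The main obstacle is the projected estimate, where two separate cancellations provide the extra $\varepsilon$-power. First, since $L_\varepsilon \dot u_\varepsilon = 0$ and the orthogonality (\ref{eq:12-10}) is preserved by $\Delta_{\mathring g}$ (which acts only in $y$), two integrations by parts in $t$ give $\int_{\mathbb R} L_\varepsilon v^\sharp \, \dot u_\varepsilon\,dt = 0$ fibrewise; the presence of $\chi_3$ contributes only commutator terms supported where $\dot u_\varepsilon$ is super-polynomially small. Second, the parity argument of Lemma~\ref{le:7.2} shows that $\Pi$ applied to odd-in-$t$ terms gains an extra factor $\varepsilon$; applied to $\chi_3 (E_\varepsilon(\zeta) - \varepsilon J_\Gamma \zeta\, \dot u_\varepsilon)$ together with its quadratic-in-$\zeta$ Taylor remainder, this yields the factor $\varepsilon^{1+\delta}$ for the $\zeta$- and $v^\flat$-differences. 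The anomalous $\varepsilon^{1-\alpha}$ coefficient for $\|\Delta v^\sharp\|$ arises because the $\zeta$-dependence of $\varepsilon^2 \Delta(v^\sharp \circ D_\zeta)\circ D_\zeta^{-1}$ forces us to differentiate $v^\sharp$ twice, costing $\varepsilon^{-2}$; the H\"older-$\alpha$ regularity of $v^\sharp$ at scale $\varepsilon$ together with the parity-induced $\varepsilon$-gain restores a factor $\varepsilon^{3-\alpha}$, of which $\varepsilon^{1-\alpha}$ remains after the $\varepsilon^{-2}$ loss. The bulk of the work lies in bookkeeping all these cancellations simultaneously in the correct weighted spaces.
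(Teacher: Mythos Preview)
Your overall plan---term-by-term bookkeeping of linear, quadratic, and $\zeta$-dependent contributions---is exactly the route the paper takes (the paper's own proof is a two-sentence sketch). Two specific points, however, are misidentified, and they are precisely the two places the paper flags as non-routine.

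\textbf{The role of the tilde norm.} In your treatment of the second and third estimates you do not explain how the term $3(\bar u_\varepsilon^2 - 1)\,v^\flat$ inside $M_\varepsilon$ acquires a factor $\varepsilon^\delta$. Near the zero set of $\bar u_\varepsilon$ (where $\chi_3\equiv 1$ and $|t|$ is of order $\varepsilon$) the potential $\bar u_\varepsilon^2-1$ is of order $1$, not exponentially small; your remark about ``exponential decay of $\bar u_\varepsilon^2-1$ where $\chi_3\neq 1$'' covers only the transition region. The smallness must therefore come from $v^\flat$, and this is exactly why the right-hand side of the second and third estimates uses $\|v^\flat\|_{\tilde{\mathcal C}^{2,\alpha}_{\varepsilon,-2}}$ rather than $\|v^\flat\|_{\mathcal C^{2,\alpha}_{\varepsilon,-2}}$: by Remark~\ref{re:11.1}, $\|\chi_5 v^\flat\|_{\mathcal C^{2,\alpha}_{\varepsilon,-2}} \le \varepsilon^2 \|v^\flat\|_{\tilde{\mathcal C}^{2,\alpha}_{\varepsilon,-2}}$, which supplies the missing $\varepsilon^2$ inside $\Omega_5$. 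Without this observation the second estimate fails.

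\textbf{Source of the $\varepsilon^{1-\alpha}$.} In the third estimate you attribute the anomalous $\varepsilon^{1-\alpha}$ to the $\zeta$-dependence of $\varepsilon^2\Delta(v^\sharp\circ D_\zeta)\circ D_\zeta^{-1}$ together with a parity gain. The paper identifies it differently: it comes from $\Pi\big(\chi_3\,\varepsilon^2(\Delta_{g_t}-\Delta_{\mathring g})\,v^\sharp\big)$, a term present already at $\zeta=0$. The factor $\varepsilon$ arises because $g_t-\mathring g=O(t/d_\Gamma)$ and the weight $\dot u_\varepsilon$ localises to $|t|\lesssim\varepsilon$; no parity cancellation is available here since $v^\sharp$ depends on $t$. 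The loss $\varepsilon^{-\alpha}$ then comes from passing second derivatives of $v^\sharp$ from the scale-$\varepsilon$ H\"older norm in $\mathcal C^{2,\alpha}_{\varepsilon,-2}(\Gamma\times\mathbb R)$ to the scale-$1$ H\"older norm required by $\mathcal C^{0,\alpha}_{\nu-2}(\Gamma)$. Your arithmetic reaches the right exponent, but the mechanism you describe would not survive a careful computation.
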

\begin{proof}
The proof is rather technical but does not offer any real difficulty. Observe that, in the last two estimates,  the use of the norm  $\|Êv^\flat \|_{\tilde{\mathcal C}_{\varepsilon, -2}^{2, \alpha} (\mathbb R^{n+1})} $ instead of $\|Êv^\flat \|_{{\mathcal C}_{\varepsilon, -2}^{2, \alpha} (\mathbb R^{n+1})} $  is crucial to estimate the term $-  3 \, ( \bar u_\varepsilon^2 - 1) \, v^\flat $ in the definition of $M_\varepsilon (v^\flat, v^\sharp, \zeta)$. In the last estimate, the first term on the right hand side comes from the estimate of the projection of $\varepsilon^2 \, (\Delta_{g_{t}} - \Delta_{\mathring g} ) \, v^\sharp$ which induces a loss of $\varepsilon^{\alpha}$.
\end{proof}

We choose, $\nu \in [-1, 0)$, $\nu > \nu_0^+$ and further assume that $\nu$ is not an indicial root of $J_\Gamma$. We choose $\alpha \in (0,1)$ small enough, namely such that $2 \alpha < \delta$, the constant which appears in the last Lemma). We use the result of Proposition~\ref{pr:8.3}, Proposition~\ref{pr:10.91} (as well as (\ref{eq:newest})  in Remark~\ref{re:11.1}) and Proposition~\ref{pr:10.10} and Lemma~\ref{le:1010}, to rephrase the solvability of (\ref{eq:1111}) as a fixed point problem. 

Theorem~\ref{main2} is now a simple consequence of the application of a fixed point theorem for contraction mapping which leads to the existence of a unique solution 
\[
{\bf u}_\varepsilon =  (\bar u_\varepsilon + \chi_4 \, v^\sharp + v^\flat ) \circ D_\zeta ,
\]
where 
\[
\|Êv^\flat \|_{\tilde{\mathcal C}_{\varepsilon, -2}^{2, \alpha} (\mathbb R^{n+1})} + \| v^\sharp \|_{\mathcal C^{2, \alpha}_{\varepsilon, -2} (\Gamma \times \mathbb R)} + \varepsilon^{2\alpha} \,  \| \zeta  \|_{\mathcal C^{2, \alpha}_{\nu} (\Gamma)} \leq  C \, \varepsilon^2  .
\]
We leave the details for the reader.  

As a byproduct, we have a rather good control on the zero set of the solution. Indeed, following the different steps of the proof, one can see that, at infinity, the zero set of ${\bf u}_\varepsilon$ is a normal over $\Gamma$ for a function which is bounded by a constant times $\varepsilon^{2-\alpha} \, (d_\Gamma)^{\nu}$. Since we do not need this result, we shall leave its proof to the interested reader.

\begin{remark}
As already mentioned in the introduction, our construction extends to the case where the minimal hypersurface $\Gamma$ is asymptotic to a minimal cone which is not necessarily minimizing. In this broader context, in order to apply our  construction, one needs to be able to apply the result of Proposition~\ref{pr:10.10} and get surjectivity of $J_\Gamma$ in the space $\mathcal C^{2, \alpha}_\nu (\Gamma)$ for some $\nu < 0$. However, this surjectivity boils down to  ask injectivity of $J_\Gamma$ in the space $\mathcal C^{2, \alpha}_{\nu'} (\Gamma)$ for some $\nu' > 2-n$. Under this latter assumption, our construction applies.  
\end{remark}

\section{The proof of Theorem~\ref{main1}}

We give now the proof of Theorem~\ref{main1}. The proof builds on the previous construction which lead to Theorem~\ref{main2} using a special minimal surface. 

We assume that $m \geq 4$ and we follow the analysis of section~\ref{se:3}. In particular, we consider the strictly area minimizing cone $C_{m,m}$ which was defined in Example~\ref{ex:1}. In this special case, using the notations of (\ref{metriccone}) and (\ref{metricconebis}), we see that the induced metric on the minimal cone $C_{m,m}$ is given by 
\[
g =  e^{2t} \, \left( dt^2 + \frac 1 2 \, (g_1 + g_2) \right) ,
\]
where $g_1$ and $g_2$ are the standard metrics on $S^m$, while the second fundamental form on ${m,m}$ is given by
\[
h =  \frac 1 2 \, e^{t} \,  (g_1 - g_2).
\]
In particular, this implies that 
\[
{\rm Tr}_g h^{(2k+1)} = 0 ,
\]
for all $k \in \mathbb N$. In other words, because of the symmetries of $C_{m,m}$, the principal curvatures of $C_{m,m}$ are either $0$, with multiplicity $1$), $e^{-t}$ with multiplicity $m$ and $-e^{-t}$ again with multiplicity $m$ and hence, the trace of $h^{(j)} $ is zero if $j$ is odd.

Now, we make use of  the result of Theorem~\ref{th:4.2} which guaranties the existence of a minimal hypersurface $\Gamma_m$ which sits on one side of $C_{m,m}$ and which is a normal graph over $C_{m,m}$ for a function which decays like $e^{\nu_0^+ t}$. As a consequence, we see that the principal curvatures of $\Gamma_m$ are as follows : there is one principal curvature which can be estimated by $\mathcal O (e^{(\nu_0^+-2)t})$ and $m$ principal curvatures whcih can be estimated by $\pm e^{-t} + \mathcal O (e^{(\nu_0^+-2)t})$. In particular, this implies that the result of Lemma~\ref{le:7.2} can be improved into~:
\begin{lemma}
For all $k  \geq 0$, there exists a constant $C_{k } >0$ such that 
\begin{equation}
	 |  \nabla^{k}  \, \Pi  \, ( \chi_3 \, \mathfrak N (0 ,0 )) |_{ \mathring g} \leq C_{k} \, \varepsilon^{2} \,  (d_{\Gamma_m})^{\nu_0^+ -4 - k} ,
\label{eq:7.222} 
\end{equation}
in $\Gamma_m$.
\end{lemma}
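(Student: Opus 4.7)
The plan is to follow the strategy of Lemma~\ref{le:7.2} almost verbatim, but to exploit two additional pieces of information: the symmetry of the Simons' cone $C_{m,m}$, which forces the odd curvature traces on $C_{m,m}$ to vanish identically, and the fact that $\Gamma_m$ is a normal graph over $C_{m,m}$ for a function decaying like $e^{\nu_0^+ t}$, which controls how much the principal curvatures of $\Gamma_m$ differ from those of $C_{m,m}$.

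First I would recall that (\ref{eq:7-18})--(\ref{eq:7-19}) give $\mathfrak{N}(0,0) = -\varepsilon\, H_t\, \dot u_\varepsilon$ with $H_t = \sum_{j \geq 1} {\rm Tr}_{\mathring g}\, \mathring h^{(j+1)}\, t^j$. Applying $\Pi$ and using parity exactly as in (\ref{eq:parity}), the only surviving contributions come from odd values of $j+1$, so that, modulo exponentially small terms created by $\chi_3$,
\[
  \Pi\bigl(\chi_3\,\mathfrak N(0,0)\bigr)(y) \;=\; -\sum_{k\ge 1} \varepsilon^{2k+1}\, a_k\, {\rm Tr}_{\mathring g}\, \mathring h_{\Gamma_m}^{(2k+1)}(y),
\]
with constants $a_k := \int_{\mathbb R} s^{2k} (u_1')^2\, ds / c$.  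The estimate therefore reduces to controlling ${\rm Tr}_{\mathring g}\, \mathring h_{\Gamma_m}^{(2k+1)}$ on $\Gamma_m$.

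Next I would analyze those odd traces. Write the principal curvatures of $\Gamma_m$ at a given point as $\kappa_i = \kappa_i^{(C)} + \delta_i$, where $\kappa_i^{(C)}$ are the principal curvatures of $C_{m,m}$ and $\delta_i$ comes from the normal-graph perturbation. The text has already pointed out that the principal curvatures of $C_{m,m}$ are $0$ (multiplicity $1$), $e^{-t}$ (multiplicity $m-1$) and $-e^{-t}$ (multiplicity $m-1$), so by inspection
\[
  \sum_i \bigl(\kappa_i^{(C)}\bigr)^{2k+1} \;=\; 0 \qquad\text{for every } k\ge 0.
\]
Since $v$ satisfies $|v| + d_{\Gamma_m}\,|\nabla v| + d_{\Gamma_m}^2\,|\nabla^2 v| \le C\, d_{\Gamma_m}^{\nu_0^+}$ by Theorem~\ref{th:4.2}, the standard normal-graph formula for the second fundamental form yields $\delta_i = \mathcal O\bigl( d_{\Gamma_m}^{\nu_0^+ - 2}\bigr)$. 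Binomial expansion of $\sum_i(\kappa_i^{(C)} + \delta_i)^{2k+1}$ and the vanishing of the zeroth-order term then give
\[
  \bigl|{\rm Tr}_{\mathring g}\, \mathring h_{\Gamma_m}^{(2k+1)}\bigr| \;\le\; C_k\, d_{\Gamma_m}^{-2k}\cdot d_{\Gamma_m}^{\nu_0^+ - 2} \;=\; C_k\, d_{\Gamma_m}^{\nu_0^+ - 2 - 2k},
\]
the leading contribution being $k=1$.

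Plugging this into the displayed series for $\Pi(\chi_3\,\mathfrak N(0,0))$ produces exactly the pointwise estimate claimed (the $k=1$ term is dominant, the remaining terms bring extra factors of $\varepsilon^2\, d_{\Gamma_m}^{-2}$). The derivative estimates (\ref{eq:7.222}) follow by differentiating the expansion and using that each tangential derivative of a curvature invariant, or of $v$, gains a factor of $d_{\Gamma_m}^{-1}$; this is the analogue on $\Gamma_m$ of the pointwise bound (\ref{eq:7-20}), and it combines with the decay estimate on $\nabla^\ell v$ supplied by Theorem~\ref{th:4.2}.

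The main technical point I expect to be delicate is the uniform control of the perturbative expansion $\kappa_i = \kappa_i^{(C)} + \delta_i$ and its derivatives along $\Gamma_m$: one needs the normal-graph parameterization to be smooth up to the required order and the asymptotic estimates on $v, \nabla v, \ldots$ from Theorem~\ref{th:4.2} to be differentiated the required number of times. Once that bookkeeping is in place, the parity argument and the exact vanishing ${\rm Tr}\, h_{C_{m,m}}^{(2k+1)}\equiv 0$ are what upgrade the generic bound of Lemma~\ref{le:7.2} to the sharper one stated here.
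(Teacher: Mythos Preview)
Your argument is correct and is exactly the approach the paper has in mind: the one-line proof in the paper (``This uses the fact that $\Gamma_m$ is a normal graph over $C_{m,m}$ for a function which decays like $(d_{\Gamma_m})^{\nu_0^+}$'') is precisely summarizing the computation you wrote out, and the paragraph preceding the lemma already records the two key ingredients you use, namely ${\rm Tr}_g\, h^{(2k+1)}=0$ on $C_{m,m}$ and the form $\pm e^{-t}+\mathcal O(e^{(\nu_0^+-2)t})$ of the principal curvatures of $\Gamma_m$. Two small remarks: your count of multiplicities ($m-1$ for each of $\pm e^{-t}$) is in fact the correct one, and your expansion actually yields the slightly stronger bound $\varepsilon^{3}\,(d_{\Gamma_m})^{\nu_0^+-4-k}$, which of course implies the stated $\varepsilon^{2}$ estimate.
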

\begin{proof}
This uses the fact that $\Gamma_m$ is a normal graph over $C_{m,m}$ for a function which decays like $(d_{\Gamma_m})^{\nu_0^+}$. 
\end{proof}

One should compare this estimate with the estimate (\ref{eq:7.22}) which holds in a more general setting and which was used in the proof of the third estimate in Lemma~\ref{le:leend}. Observe that it was because of (\ref{eq:7.22}) that we needed to use $\nu \geq -1$ in the weighted spaces where we were looking for a solution of (\ref{eq:1111}).  

Recall that in the case where the cone $C_{m,m}$ is minimizing, we have
\[
-2  \leq \nu_0^+ \leq -1 ,
\]
and hence $\nu_0^+-2 < -3$.  As a consequence, in the case where the minimal hyperfurface we start with is given by $\Gamma_{m}$ we can use some parameter $\nu$ such that 
\[
- 2 < \nu < \nu_0^+ ,
\] 
in the proof of Theorem~\ref{main2}. Recall that, in the case where the minimal cone is strictly area minimizing, the operator $J_\Gamma$ is injective in $\mathcal C^{2,Ê\alpha}_\nu(\Gamma)$ for $\nu < \nu_0^+$ and hence it is surjective for all $\nu > \nu_0^-$.

We apply the construction described above to the hypersurfaces $(1+\lambda) \, \Gamma_m$ for any $\lambda$ close enough to $0$ and denote by  ${\bf u}_{\varepsilon, \lambda}$ the solution which is known to exist for all $\varepsilon$ small enough. It should be clear the ${\bf u}_{\varepsilon, \lambda}$ depends smoothly on $\lambda$, at least when $\varepsilon$ is fixed close enough to $0$. Differentiation with respect ot $\lambda$, at $\lambda =0$, yields 
\[
(\varepsilon^2 \, \Delta  +1 -  3 \, {\bf u}_{\varepsilon, \lambda}^2 ) \, \phi_\varepsilon =0 ,
\]
where 
\[
\phi_\varepsilon : = \partial_\lambda  {\bf u}_{\varepsilon, \lambda } \, _{| \lambda =0} .
\]

We claim that~:
\begin{proposition}
For $\varepsilon$ small enough, the function $\phi_\varepsilon$ is positive.
\end{proposition}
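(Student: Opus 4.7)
The plan is to show that, near $\Gamma_m$, $\phi_\varepsilon$ is to leading order a positive multiple of $\dot u_\varepsilon$ with multiplier proportional to the scaling Jacobi field $\zeta_0(y):=y\cdot N(y)$ on $\Gamma_m$, and then to propagate positivity globally via the maximum principle applied to the linearized operator $L:=-(\varepsilon^2\Delta+1-3{\bf u}_{\varepsilon,0}^2)$, which annihilates $\phi_\varepsilon$.

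First I would differentiate at $\lambda=0$ the representation
\[
{\bf u}_{\varepsilon,\lambda} = \bigl(\tilde u_\varepsilon^{(\lambda)} + \chi_4^{(\lambda)}\,v_\lambda^\sharp + v_\lambda^\flat\bigr)\circ D_{\zeta_\lambda}^{(\lambda)}
\]
produced by Section~11 applied to $(1+\lambda)\Gamma_m$. The implicit function theorem applied to the contraction of Section~11 yields smooth dependence on $\lambda$ with $\lambda$-derivative of the same $O(\varepsilon^2)$ size as the correction triple itself, so the dominant contribution to $\phi_\varepsilon$ comes from $\partial_\lambda\tilde u_\varepsilon^{(\lambda)}|_{\lambda=0}$. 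Since the dilation of $\Gamma_m$ has normal component $\lambda\,\zeta_0(y)$ at $y$, the signed distance from $Z(y,t)$ to $(1+\lambda)\Gamma_m$ expands as $t-\lambda\zeta_0(y)+O(\lambda^2)$, whence, in Fermi coordinates,
\[
\phi_\varepsilon(y,t) = -\varepsilon^{-1}\,\zeta_0(y)\,\dot u_\varepsilon(t)\,\chi_1(y,t) + r_\varepsilon(y,t),
\]
with a pointwise bound $|r_\varepsilon(y,t)|\le C\varepsilon^2\,d_{\Gamma_m}(y)^{-2}$ inherited from the weighted norms of Section~11. Because $C_{m,m}$ is strictly area minimizing, the Proposition of Section~5 combined with Theorem~\ref{th:4.2} applied to the Jacobi field $\zeta_0$ gives the sharp asymptotic $\zeta_0(y)\sim a\,d_{\Gamma_m}(y)^{\nu_0^+}$ at infinity for some $a\neq 0$; in particular $\zeta_0$ has constant sign on $\Gamma_m$ (as also follows from Lemma~\ref{le:1010}), and after reversing the orientation of $\Gamma_m$ if necessary one may assume $\zeta_0<0$.

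Next I would split $\mathbb R^{n+1}$ into the tube $\mathcal T_{\rm in}:=\{|t|\le 2\varepsilon|\log\varepsilon|\}$ and its complement $\mathcal T_{\rm out}$. On $\mathcal T_{\rm in}$ one has $\dot u_\varepsilon(t)\ge c\,\varepsilon^{\sqrt 2}$ and $|\zeta_0(y)|\ge c\,d_{\Gamma_m}(y)^{\nu_0^+}$, so the leading term has magnitude at least $c\,\varepsilon^{\sqrt 2-1}\,d_{\Gamma_m}^{\nu_0^+}$; since $-2-\nu_0^+<0$ (because $\nu_0^+>-2$), this dominates the bound $C\varepsilon^2\,d_{\Gamma_m}^{-2}$ on $r_\varepsilon$ for $\varepsilon$ small, and hence $\phi_\varepsilon>0$ throughout $\mathcal T_{\rm in}$, in particular on $\partial\mathcal T_{\rm in}$. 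On $\mathcal T_{\rm out}$, the potential $3{\bf u}_{\varepsilon,0}^2-1$ is bounded below by $1$, so $L$ is strictly coercive; and both the leading term (since $|\zeta_0|\to 0$ along $\Gamma_m$ and $\dot u_\varepsilon\to 0$ exponentially in $|t|/\varepsilon$) and $r_\varepsilon$ decay to $0$ at infinity in $\mathcal T_{\rm out}$. The standard maximum principle for coercive operators then rules out any negative interior infimum of $\phi_\varepsilon$ in $\mathcal T_{\rm out}$, giving $\phi_\varepsilon\ge 0$ there, and the strong maximum principle applied to $L\phi_\varepsilon=0$ on all of $\mathbb R^{n+1}$ upgrades this to $\phi_\varepsilon>0$ everywhere.

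The main obstacle is the first step: establishing the sharp $O(\varepsilon^2)$ pointwise control on $r_\varepsilon$. This requires differentiating through the nonlinear fixed-point scheme of Section~11 with respect to the underlying hypersurface $(1+\lambda)\Gamma_m$, hence smooth dependence in $\lambda$ of each building block (the cutoffs $\chi_j^{(\lambda)}$, the Fermi diffeomorphism $D_{\zeta_\lambda}^{(\lambda)}$, the approximate solution $\tilde u_\varepsilon^{(\lambda)}$, and the operators $L_\varepsilon^{(\lambda)}$ and $J_{\Gamma_m^{(\lambda)}}$), together with a contraction constant uniform in $\lambda$ near $0$. Once these technical points are in place, the maximum-principle step of the second paragraph is routine.
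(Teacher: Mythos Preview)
Your overall strategy --- identify the leading term $\varepsilon^{-1}\dot u_\varepsilon\,\zeta_0$ of $\phi_\varepsilon$ on a tube about $\Gamma_m$, bound the remainder, and then use the maximum principle where the potential is coercive --- is exactly what the paper does. The gap is in the remainder estimate.

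You claim $|r_\varepsilon(y,t)|\le C\varepsilon^2\,d_{\Gamma_m}(y)^{-2}$ ``inherited from the weighted norms of Section~11''. But Section~11 controls the three pieces of the correction in \emph{different} weights: $v^\sharp,v^\flat$ live in weight $-2$, while $\zeta$ lives in $\mathcal C^{2,\alpha}_\nu(\Gamma)$ with $\nu\in[-1,0)$ and in particular $\nu>\nu_0^+$. Differentiating through $D_{\zeta_\lambda}$ contributes to $r_\varepsilon$ a term of the form $\varepsilon^{-1}\dot u_\varepsilon\,\partial_\lambda\zeta_\lambda$, whose spatial decay is only $d_{\Gamma_m}^{\nu}$, not $d_{\Gamma_m}^{-2}$. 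On the tube this piece is of order $\varepsilon^{1-2\alpha}\,\dot u_\varepsilon\,d_{\Gamma_m}^{\nu}$, to be compared with the leading term $\varepsilon^{-1}\dot u_\varepsilon\,\zeta_0$ which is only of order $\varepsilon^{-1}\dot u_\varepsilon\,d_{\Gamma_m}^{\nu_0^+}$. Since $\nu>\nu_0^+$, the remainder eventually dominates as $d_{\Gamma_m}\to\infty$, and positivity on the tube cannot be concluded. Your stated bound simply omits this $\zeta$-contribution.

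This is precisely the point the paper singles out. The whole purpose of the improved lemma in Section~12 (exploiting that $\mathrm{Tr}_g\,h^{(2k+1)}=0$ for $C_{m,m}$, hence $|\Pi(\chi_3\,\mathfrak N(0,0))|\le C\varepsilon^2 d_{\Gamma_m}^{\nu_0^+-4}$) is to allow the fixed-point scheme to be rerun with a weight $\nu\in(-2,\nu_0^+)$ for $\zeta$. With that choice the $\partial_\lambda\zeta_\lambda$ contribution decays \emph{faster} than $\zeta_0$, and on a tube of width $c^*\varepsilon$ the leading term dominates uniformly in $y$. The paper states this explicitly: ``This is where we use in a fundamental way the fact that, in the proof of Theorem~\ref{main2}, we can now use some weight parameter $\nu<\nu_0^+$.'' Once you incorporate this, your argument goes through; without it, the comparison on $\mathcal T_{\rm in}$ fails at spatial infinity. (Two smaller points: on your tube $|t|\le 2\varepsilon|\log\varepsilon|$ one has $\dot u_\varepsilon\gtrsim\varepsilon^{2\sqrt 2}$, not $\varepsilon^{\sqrt 2}$; and the paper works with $\zeta_0>0$, so your sign convention is opposite. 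Neither affects the real issue.)
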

\begin{proof}
We fix $c^* >0$ large enough and we define $T_\varepsilon(\Gamma_m)$ to be the tubular neighborhood around $\Gamma$ of width $c^* \, \varepsilon$. We first prove that $\phi_\varepsilon >0$ in $T_\varepsilon (\Gamma_m)$.

Recall that, in the proof of Lemma~\ref{le:1010},  we have defined 
\[
\zeta_0 : =  y \cdot N(y),
\]
which is positive and is bounded from above and from below  by a positive constant times $(d_{\Gamma_m} )^{\nu_0^+}$.

Now, we use the fact that, at infinity, the rate of convergence of $\Gamma_m$ to $C_m$ is controlled by a constant times $(d_\Gamma)^{\nu_0^+}$ and hence $(1+\lambda) \, \Gamma_m$ is itself a normal graph over $\Gamma_m$ for some function which, for $\lambda$ small enough, is bounded by a constant times $ \lambda \, \zeta_0$. Using this property and following the different steps of the construction of  ${\bf u}_{\varepsilon, \lambda}$, one can check that
\[
\partial_\lambda {\bf u}_{\varepsilon, \lambda} \, _{| \lambda =0}  =  \varepsilon^{-1} \, \dot u_\varepsilon  \, \left( \zeta_0   + \mathcal O (\varepsilon^{2-2 \alpha} \, (d_{\Gamma_m})^{\nu})  \right) + \mathcal O (\varepsilon^2 \, (d_{\Gamma_m})^{\nu}) , 
\]
in $T_\varepsilon (\Gamma_m)$. Hence, $\phi_\varepsilon >0$ in $T (\Gamma_m)$ provided $\varepsilon$ is chosen small enough. This is where we use in a fundamental way the fact that, in the proof of Theorem~\ref{main2}, we can now use some weight parameter $\nu < \nu_0^+$. 

Then, provided we have fixed $c^* >0$ large enough, the maximum principle can be used to prove that $\phi_\varepsilon >0$ in $\mathbb R^{n+1}$ and this complete the proof of the result.
\end{proof}

As explain in the introduction, this implies that the solution ${\bf u}_{\epsilon, 0}$ is stable, for all $\varepsilon$ small enough and this completes the proof of Theorem~\ref{main1}.

\end{document}